\colorlet{linkequation}{blue}
\definecolor{dgreen}{rgb}{0,0.5,0}
\definecolor{violet}{rgb}{0.5,0,0.5}
\definecolor{dred}{rgb}{0.7,0,0}
\definecolor{ddred}{rgb}{0.5,0,0}
\definecolor{dblue}{rgb}{0,0,0.5}
\definecolor{ddblue}{rgb}{0,0,0.3}
\newtheorem{theorem}{Theorem}[section]
\newtheorem{lemma}[theorem]{Lemma}
\newtheorem{definition}[theorem]{Definition}
\newtheorem{remark}[theorem]{Remark}
\numberwithin{equation}{section}
\DeclareMathOperator*{\ddiv}{div}
\newcommand{\integral}[3]{\int_{#1} #2 \ #3}
\newcommand{\mint}[3]{\fint_{#1} #2 \ #3}
\newcommand{\norm}[1]{\left| #1\right|}
\newcommand{\Norm}[1]{\left|\hspace{-0.2mm}\left| #1 \right|\hspace{-0.2mm}\right|}
\newcommand{\gh}[1]{\left( #1\right)}
\newcommand{\mgh}[1]{\left\{ #1\right\}}
\newcommand{\bgh}[1]{\left[ #1\right]}
\newcommand{\ww}{W^{1,p(\cdot)}}
\newcommand{\wwz}{W_0^{1,p(\cdot)}}
\newcommand{\OO}{\Omega}
\newcommand{\ga}{p^-}
\newcommand{\gb}{p^+}
\newcommand{\ma}{\mathbf{a}}
\newcommand{\La}{\Lambda}
\newcommand{\M}{\mathcal{M}}
\newcommand{\bb}{\mathbb{R}}
\newcommand{\R}{\mathbb{R}}
\newcommand{\N}{\mathbb{N}}
\newcommand{\A}{\mathcal{A}_{\psi_1,\psi_2}^g}
\newcommand{\diva}{\ddiv \mathbf{a}}
\newcommand{\data}{\mathsf{data}}
   \def\MR#1{}
\begin{document}

\title[Elliptic double obstacle problems with measure data]{Nonlinear gradient estimates for elliptic double obstacle problems with measure data}

\author{Sun-Sig Byun}
\address{S.-S. Byun: Department of Mathematical Sciences and Research Institute of Mathematics, Seoul National University, Seoul 08826, Republic of Korea}
\email{byun@snu.ac.kr}

\author{Yumi Cho}
\address{Y. Cho: Department of Mathematical Sciences, Seoul National University,
Seoul 08826, Republic of Korea}
\email{imuy31@snu.ac.kr}

\author{Jung-Tae Park}
\address{J.-T. Park: Korea Institute for Advanced Study, Seoul 02455, Republic of Korea}
\email{ppark00@kias.re.kr}

\thanks{S.-S. Byun was supported by NRF-2017R1A2B2003877.  Y. Cho was supported by NRF-2019R1I1A1A01064053. J.-T. Park was supported by NRF-2019R1C1C1003844.}

\subjclass[2010]{Primary  35J87; Secondary 35R06, 42B37} 

\date{May 2, 2020}

\keywords{Elliptic double obstacle problem; measure data; variable exponent growth; gradient estimate; Reifenberg flat domain}

\begin{abstract}
We study quasilinear elliptic double obstacle problems with a variable exponent growth when the right-hand side is a measure. A global Calder\'{o}n-Zygmund estimate for the gradient of an approximable solution is obtained in terms of the associated double obstacles and a given measure, identifying minimal requirements for the regularity estimate.
\end{abstract}

\maketitle

\section{Introduction and main results}

We consider a double obstacle problem involving the following elliptic equation with measure data:
\begin{equation}\label{pem1}
-\ddiv \mathbf{a}(Du,x) = \mu \quad\text{in} \hspace{2mm} \OO,
\end{equation}
where $\mu$ is a signed Radon measure on $\Omega$ with finite total mass $|\mu|(\OO)<\infty$. Here $\Omega$ is a bounded domain in $\R^n$ $(n\ge2)$ with nonsmooth boundary $\partial\OO$, and the nonlinearity $\mathbf{a}=\mathbf{a}(\xi,x) : \R^n\times\R^n\rightarrow\R^n$ satisfies a variable $p(x)$-growth condition.
We are interested in finding a suitable notion of a solution $u$ with two-sided constraints on the values of $u$, $\psi_1\le u\le\psi_2$ a.e. in $\Omega$, where the given double obstacles $\psi_1,\psi_2\in W^{1,p(\cdot)}(\Omega)$ satisfy
\begin{equation}\label{psi-g-L1}
\left\{\begin{aligned}
&\mbox{$\psi_1\le\psi_2$ a.e. in $\Omega$,   $\psi_1\le0\le\psi_2$ a.e. on $\partial\Omega$,}\\
&\mbox{$\diva(D\psi_1,\cdot),\ \diva(D\psi_2,\cdot)\in L^1(\Omega)$.}\\
\end{aligned}\right.
\end{equation}

Under possibly optimal conditions on $p(\cdot)$, $\mathbf a$ and $\Omega$, we show that such a solution $u$ satisfies the following gradient bound:
\begin{equation}\label{intro-czest}
\begin{aligned}
\integral{\OO}{|Du|^q}{dx} &\le c \integral{\OO}{\M_1(\mu)^{\frac{q}{p(x)-1}}}{dx}+c\integral{\OO}{\M_1(\Psi_1)^{\frac{q}{p(x)-1}}}{dx}\\
&\qquad +c\integral{\OO}{\M_1(\Psi_2)^{\frac{q}{p(x)-1}}}{dx}+ c
\end{aligned}
\end{equation}
for every $0<q<\infty$. Here $\M_1$ is the {\em fractional maximal functions of order 1} for $\mu$ and $\Psi_i$ defined by
\begin{align}\label{fractional max ftn}
\mathcal{M}_1(\mu)(x):=\sup_{r>0}\frac{r|\mu|(B_r(x))}{|B_r(x)|}
\ \mbox{and} \
\mathcal{M}_1(\Psi_i)(x):=\sup_{r>0}r\fint_{B_r(x)} |\Psi_i(y)| \ dy
\end{align}
for $x\in\R^n$, where $\Psi_i(x) := \diva(D\psi_i,x)$ for $i=1,2$. Here we have assumed that $\mu$ and $\Psi_i$ are defined in $\R^n$ by considering the zero extension to $\R^n\setminus\Omega$ (see \cite{AH96,KS03} for a further discussion on the fractional maximal function).

Specifically, we assume that a function $p(\cdot)$ is a continuous function on $\overline{\OO}$ satisfying
\begin{equation}\label{p-range}
2-\frac{1}{n} < \ga \le p(\cdot) \le \gb < \infty
\end{equation}
for some constants $\ga$ and $\gb$ and that the vector field
$\mathbf{a}=\mathbf{a}(\xi,x) : \bb^n \times \R^n \rightarrow
\bb^n$ is differentiable in $\xi$ for a.e. $x$ and measurable in $x$ for all $\xi$.
In addition, there exist two positive constants $\Lambda_1$ and $\Lambda_2$ such that
\begin{equation}\label{growth}
\left\{
\begin{aligned}
& |\mathbf{a}(\xi,x)| + |\xi||D_{\xi}\mathbf{a}(\xi,x)| \le \Lambda_1 |\xi|^{p(x)-1},\\
& \Lambda_2 |\xi|^{p(x)-2}|\eta|^2 \le D_{\xi}\mathbf{a}(\xi,x)\eta \cdot \eta
\end{aligned}\right.
\end{equation}
for every $\xi \in \bb^n \setminus \{0\}$, $\eta \in \bb^n$, and for a.e. $x \in \bb^n$.
Then the structure condition $\eqref{growth}_2$ yields the following monotonicity
condition:
\begin{equation}\label{monotonicity}
\big(\mathbf{a}(\xi,x)-\mathbf{a}(\eta,x)\big)\cdot (\xi-\eta ) \ge
\left\{
\begin{alignedat}{3}
&\tilde{\Lambda} \left( \norm{\xi}^2 + \norm{\eta}^2 \right)^{\frac{p(x)-2}{2}} \norm{\xi-\eta}^2  &&\quad \mbox{if $1<p(x)<2$},\\
&\tilde{\Lambda} \norm{\xi-\eta}^{p(x)} &&\quad \mbox{if $p(x)\ge2$}
\end{alignedat}\right.
\end{equation}
for every $ \xi, \eta \in \bb^n$, for a.e. $x\in\bb^n$, and for some constant
$\tilde{\Lambda}=\tilde{\Lambda}(n,\Lambda_2,\ga,\gb)>0$.

For the case $p^->n$, the measure $\mu$ belongs to $W^{-1,p'(\cdot)}(\Omega)$, the dual space of $\wwz(\OO)$, by Morrey's inequality. Then the obstacle problem can be characterized as the variational inequality
\begin{align}\label{regular-ob}
\int_\Omega \mathbf{a}(Du,x)\cdot(D\varphi-Du)\ dx\ge\int_\Omega\mu(\varphi-u)\ dx
\end{align}
for all $\varphi\in \mathcal{A}_{\psi_1,\psi_2}$. Here the admissible set $\mathcal{A}_{\psi_1,\psi_2}$ is defined by
\begin{equation}\label{admissible set}
\mathcal{A}_{\psi_1,\psi_2}:=\mgh{\varphi\in W^{1,p(\cdot)}_0(\Omega): \psi_1\le \varphi\le\psi_2 \mbox{ a.e. in $\Omega$}},
\end{equation}
where the obstacles $\psi_1,\psi_2 \in W^{1,p(\cdot)}(\Omega)$ satisfy $\eqref{psi-g-L1}_1$. Note that $\mathcal{A}_{\psi_1,\psi_2} \neq \emptyset$ since $(\psi_1)_+ -(\psi_2)_- \in \mathcal{A}_{\psi_1,\psi_2}$. In this case, there exists a unique weak solution $u \in \mathcal{A}_{\psi_1,\psi_2}$ to the variational inequality \eqref{regular-ob} (see \cite{KS00,Sho97}), and there have been regularity results regarding Calder\'on-Zygmund type estimates such as \eqref{intro-czest} (see for example \cite{BDM11,BCW12,BC15,BCO16,Erh14,BR20}). Thus, we only focus on the case $p^- \le n$ for which obstacle problem with a bounded Radon measure $\mu$ is not in general representable by a variational inequality like \eqref{regular-ob}. Scheven \cite{Sch12a,Sch12b} introduced {\em a limit of approximation solutions} of the obstacle problem related to the problem \eqref{pem1} with a $p$-Laplace type.
We adopt this concept to our problem with the $p(\cdot)$-growth (see Section~\ref{Preliminaries} for notation appearing in the following definition):
\begin{definition}
\label{def-limit-of approx-sol}
Let $\psi_1,\psi_2, g \in W^{1,p(\cdot)}(\Omega)$ with $\psi_1\le g\le\psi_2$ a.e. in $\Omega$ and let $\mu\in\mathcal{M}_b(\Omega)$.
We say that $u\in\mathcal{T}_g^{1,p(\cdot)}(\Omega)$ with $\psi_1\le u\le\psi_2$ a.e. in $\Omega$
is {\em a limit of approximating solutions} to the obstacle problem $OP(\psi_1,\psi_2;\mu)$ if
there exist functions $\mu_i\in W^{-1,p'(\cdot)}(\Omega)\cap L^1(\Omega)$ with
\begin{equation}\label{sol-def-mu}
\left\{
\begin{aligned}
&\mu_i\overset{*}{\rightharpoonup}\mu\quad\mbox{in $\mathcal{M}_b(\Omega)$ as $i\rightarrow\infty$},\\
&\limsup_{i\rightarrow\infty} |\mu_i|(V) \le |\mu|(\overline{V}) \quad \mbox{for any open set $V\subset\R^n$}
\end{aligned}
\right.
\end{equation}
and weak solutions $u_i\in W^{1,p(\cdot)}(\Omega)$ with $ \psi_1\le u_i\le\psi_2$  a.e. in $\Omega$ to the variational inequalities
\begin{equation}\label{var-ineq}
  \int_\Omega \mathbf{a}(Du_i,x)\cdot (D\varphi-Du_i) \ dx
  \ge \int_{\Omega}\mu_i (\varphi-u_i) \ dx
\end{equation}
for all $\varphi\in u_i+W^{1,p(\cdot)}_0(\Omega)$ with $\psi_1\le\varphi\le\psi_2$ a.e. in $\Omega$, such that
\begin{equation}\label{seq-converge}
u_i\rightarrow u \quad in \quad W^{1, r(\cdot)}(\Omega)
\end{equation}
for all continuous functions $r(\cdot)$ on $\overline{\Omega}$ satisfying
$0< r(\cdot)<\min\mgh{\frac{n(p(\cdot)-1)}{n-1}, p(\cdot)}$.
\end{definition}

\begin{remark}
\begin{enumerate}[\rm(i)]
\item 
The given data $g$ is used as a test function in \eqref{var-ineq} (see Lemma~\ref{lem-grad-bd} below).


\item Since $p(\cdot) >2-\frac{1}{n}$ in \eqref{p-range}, it follows from \eqref{seq-converge} that $u_i\rightarrow u$ in $W^{1, 1}(\Omega)$. This property  is necessary to prove the existence of a limit of approximating solution (see Section~\ref{existence} below). There have been many existence results for obstacle problems with measure data. For instance, see \cite{DD99, DL02} for linear problems, \cite{BS02,BP05} for semilinear problems, and \cite{BC99,Leo01,Leo00,OR01a,OR01b,Sch12a} for $p$-Laplace type problems. We also refer to \cite[Section 1.1]{Sch12a} for further discussions in the literature. On the other hand, the uniqueness of a limit of approximating solution remains open except for the linear problems, even in cases without obstacles (see \cite{DMOP99}).
\end{enumerate}
\end{remark}

We next present our hypotheses on $p(\cdot)$, $\mathbf a$ and $\Omega$.
\begin{definition}
Let $R>0$ and $0<\delta \le\frac{1}{8}$.
We say $(p(\cdot),\mathbf{a},\OO)$ is {\em $(\delta,R)$-vanishing}
if the followings hold:
\begin{enumerate}[\rm(i)]
\item $p(\cdot)$ admits $\omega : [0,\infty) \to [0,\infty)$ as a modulus of continuity satisfying
\begin{equation}\label{log-holder}
\sup_{0<r \le R} \omega(r) \log\gh{\frac{1}{r}} \le \delta.
\end{equation}

\item $\mathbf{a}$ satisfies
\begin{equation}\label{small BMO}
\sup_{0<r \le R} \sup_{y\in\bb^n} \mint{B_r(y)}{\theta\gh{\mathbf{a},B_r(y)}(x)}{dx} \le \delta,
\end{equation}
where
\begin{equation*}\label{AA}
\theta\gh{\mathbf{a},D}(x) := \sup_{\xi \in \bb^n \setminus \{0\}} \left|\frac{\mathbf{a}(\xi,x)}{|\xi|^{p(x)-1}} - \overline{\gh{\frac{\mathbf{a}(\xi,\cdot)}{|\xi|^{p(\cdot)-1}}}}_D\right|
\end{equation*}
for a bounded open set $D \subset \bb^n$.

\item $\OO$ is {\em $(\delta,R)$-Reifenberg flat}, namely,
for any $x_0 \in \partial\OO$ and for any $0<r \le R$, there exists
a coordinate system $\{y_1,\cdots,y_n\}$ with the origin at $x_0$ such that
\begin{equation*}
\label{Reifenberg} \{y\in B_r: y_n > \delta r\}
\subset B_r  \cap \OO
\subset  \{y\in B_r: y_n > - \delta r\}.
\end{equation*}
\end{enumerate}
\end{definition}

\begin{remark}
\begin{enumerate}[\rm(i)]
\item A function $p(\cdot)$ satisfying \eqref{log-holder} is log-H\"{o}lder continuous, which is more general than H\"{o}lder continuous (see Section \ref{Preliminaries} for details).

\item If a vector field $\mathbf{a}$ satisfies \eqref{small BMO}, then $x \mapsto \frac{\mathbf{a}(\xi,x)}{|\xi|^{p(x)-1}}$ is of small BMO for each $\xi$ uniformly. This property plays an important role in our perturbation argument in Section \ref{comparison} for the regularity theory (see also for instance \cite{BW04,BCW12,BC15,BCO16,BOR16,BOP17, Phu14a, Ngu15,NP19}).

\item If $\OO$ is $(\delta,R)$-Reifenberg flat, then we have the following measure density properties:
\begin{equation*}
\left\{
\begin{aligned}
& \sup_{x \in \OO} \sup_{0<r \le R} \frac{|B_r(x)|}{|\OO \cap B_r(x)|} \le \gh{\frac{2}{1-\delta}}^n \le \gh{\frac{16}{7}}^n,\\
& \inf_{x \in \partial\OO} \inf_{0<r \le R} \frac{|\OO^c \cap B_r(x)|}{|B_r(x)|} \ge \gh{\frac{1-\delta}{2}}^n \ge \gh{\frac{7}{16}}^n.
\end{aligned}
\right.
\end{equation*}
We refer
to \cite{BW04, LMS14, Tor97} for further discussions on Reifenberg flat domains.
\end{enumerate}
\end{remark}

For simplicity of notation, we employ the word ``$\data$'' to denote any structure constant that depends only on $n$, $\La_1$, $\La_2$, $\ga$ and $\gb$. Moreover we assume that
$g\in W^{1,p(\cdot)}_0(\Omega)$ satisfies
\begin{equation}\label{g-L1}
\mbox{$\psi_1\le g\le\psi_2$ a.e. in $\Omega$ \ and \ $\diva(Dg,\cdot)\in L^1(\Omega)$.}
\end{equation}

Now, we are ready to state our main result.
\begin{theorem}\label{main theorem}
Let $0<q<\infty$, $\ga \le n$ and $\mu\in\mathcal{M}_b(\Omega)$,
and let \eqref{p-range} and \eqref{growth} hold.
Suppose that
$\psi_1, \psi_2\in W^{1,p(\cdot)}(\Omega)$
and
$g\in W^{1,p(\cdot)}_0(\Omega)$ satisfy \eqref{psi-g-L1} and \eqref{g-L1}, respectively.
Then there exists a small constant $\delta=\delta(\data, q) > 0$
such that
if
$(p(\cdot),\mathbf{a},\Omega)$ is $(\delta,R)$-vanishing for some $0<R<1$,
then
for any limit of approximation solutions $u\in\mathcal{T}_g^{1,p(\cdot)}(\Omega)$ to the obstacle problem $OP(\psi_1,\psi_2;\mu)$,
there is a constant $c =c(\data,\omega(\cdot),q,R,\OO,\alpha)>0$
such that
\begin{equation}
\begin{aligned}\label{main_r}
\integral{\OO}{|Du|^q}{dx} &\le c \gh{V+ V^{\frac{1}{(\ga-1)(1-\alpha)}}}^{(n+1)q} + c\integral{\OO}{\M_1(\mu)^{\frac{q}{p(x)-1}}}{dx}\\
&\qquad+c\integral{\OO}{\M_1(\Psi_1)^{\frac{q}{p(x)-1}}}{dx}
+c\integral{\OO}{\M_1(\Psi_2)^{\frac{q}{p(x)-1}}}{dx}+ c
\end{aligned}
\end{equation}
for every constant $\alpha$ with $0< \alpha \le \frac{1}{2}\left(\frac{n}{n-1} -
\frac{1}{\ga-1} \right) <1$, where
\begin{equation}\label{VV}
\begin{aligned}
V &:=
|\mu|(\OO)
+
\int_\Omega|\diva(D\psi_1,x)|\ dx
+
\int_\Omega|\diva(D\psi_2,x)|\ dx\\
&\qquad+
\int_\Omega|\diva(Dg,x)|\ dx
+
\int_\Omega|Dg|\ dx .
\end{aligned}
\end{equation}
\end{theorem}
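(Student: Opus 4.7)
The plan is to establish \eqref{main_r} by working with the approximating solutions $u_i$ from Definition~\ref{def-limit-of approx-sol}, proving a uniform version of \eqref{main_r} for $u_i$ and then passing to the limit via \eqref{seq-converge} and Fatou's lemma. As a preliminary step I would derive a Boccardo--Gallou\"et type bound $\int_\Omega |Du_i|\,dx \le c\,V^{1/(\ga-1)}$ by testing \eqref{var-ineq} with $u_i - g$ (admissible since $\psi_1 \le g \le \psi_2$, cf.\ the remark after Definition~\ref{def-limit-of approx-sol}) and truncating to bootstrap into $W^{1,r(\cdot)}(\Omega)$ for every $r(\cdot)<\frac{n(p(\cdot)-1)}{n-1}$. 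This preliminary bound is the source of the $V+V^{1/((\ga-1)(1-\alpha))}$ factor in \eqref{main_r}.

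The core of the proof is a three-step comparison carried out on a generic interior ball $B_{2r}\subset\Omega$ or boundary ball $B_{2r}\cap\Omega$. First, compare $u_i$ with the solution $w$ of the \emph{double obstacle problem without measure data} on $B_{2r}$ sharing the boundary trace of $u_i$; mixed testing of the two variational inequalities, combined with Boccardo--Gallou\"et truncation inside the obstacle set, yields $|Du_i - Dw|$ controlled in suitable averages by $r\,|\mu_i|(B_{2r})/|B_{2r}|$. Second, compare $w$ with the unconstrained solution $v$ of $-\ddiv\mathbf{a}(Dv,x)=0$ matching $w$ on $\partial B_{2r}$: by obstacle complementarity $w$ itself solves a measure equation whose right-hand side is supported on the contact set $\{w=\psi_1\}\cup\{w=\psi_2\}$ and bounded by $|\Psi_1|+|\Psi_2|$, so $|Dw-Dv|$ is controlled by $r\,\fint_{B_{2r}}(|\Psi_1|+|\Psi_2|)\,dx$. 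Third, freeze the coefficients to their $B_{2r}$-average $\overline{\mathbf{a}}$ and, near the boundary, flatten $\partial\Omega$ via the $(\delta,R)$-Reifenberg property; the small-BMO bound \eqref{small BMO} and log-H\"older estimate \eqref{log-holder} together give quantitative closeness of $Dv$ to the reference solution $Dh$, while $h$ itself enjoys Lipschitz-type gradient estimates.

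Armed with these comparisons I would run a Calder\'on--Zygmund / Vitali stopping-time argument on the super-level sets of the Hardy--Littlewood maximal function $\mathcal{M}(|Du_i|^{p(\cdot)})$ at $\lambda$-intrinsic scales. Combined with the Lipschitz bound for $Dh$, the three comparisons produce a good-$\lambda$ inequality in which the super-level set at $A\lambda$ is controlled by a small multiple of the same set at $\lambda$, plus level sets of $\mathcal{M}_1(\mu_i)^{p(\cdot)/(p(\cdot)-1)}$, $\mathcal{M}_1(\Psi_1)^{p(\cdot)/(p(\cdot)-1)}$ and $\mathcal{M}_1(\Psi_2)^{p(\cdot)/(p(\cdot)-1)}$. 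Layer-cake integration over $\lambda>\lambda_0$, with the cutoff $\lambda_0$ of order $V+V^{1/((\ga-1)(1-\alpha))}$ provided by the preliminary $W^{1,1}$ bound, then produces \eqref{main_r} uniformly in $i$; the factor $(n+1)q$ arises from balancing the initial $L^1$ bound against the Lebesgue density in the rearrangement and the $q$-th power integration of the iteration.

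The main technical obstacle is that $\mu_i\in L^1$ is not a uniformly bounded element of $W^{-1,p'(\cdot)}(\Omega)$, so the comparison in step one cannot be read off a single energy identity but requires Boccardo--Gallou\"et truncations $T_k(u_i-w)$; ensuring that $u_i \mp T_k(u_i-w)$ and $w \pm T_k(u_i-w)$ remain admissible in the double obstacle set requires a careful symmetrization across the contact sets of $\psi_1$ and $\psi_2$, with the error of each truncation cut absorbed through the $L^1$-assumptions \eqref{psi-g-L1} on $\diva(D\psi_j,\cdot)$. A secondary delicate point is uniformity in the variable exponent: the $\lambda$-intrinsic scaling must be compatible with the oscillation of $p(\cdot)$ on each $B_r$, which is precisely what forces the smallness parameter $\delta$ in \eqref{log-holder} and \eqref{small BMO} to depend on $q$, matching the statement of the theorem.
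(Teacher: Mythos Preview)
Your overall architecture (approximate, compare locally, run a covering/good-$\lambda$ argument, layer-cake) matches the paper's, but two points diverge in ways that matter.

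First, your comparison chain differs from the paper's. Your first step ($u_i$ versus the double obstacle solution $w$ with zero right-hand side) is fine and in fact isolates $\mu_i$ cleanly. The trouble is your second step: you invoke ``obstacle complementarity'' to assert that $-\diva(Dw,\cdot)$ is a function bounded by $|\Psi_1|+|\Psi_2|$. That is a Lewy--Stampacchia inequality for a nonlinear double obstacle problem with $p(\cdot)$-growth, which you have not justified and which is not an off-the-shelf result in this generality. The paper avoids this entirely by a different chain: its first reference $z\in\mathcal{A}_{\psi_1}(\Omega_{8r})$ solves a \emph{one-sided} obstacle problem with $\diva(D\psi_2,\cdot)$ on the right-hand side, and the key observation (proved by testing with $\min\{z,\psi_2\}$) is that this choice forces $z\le\psi_2$ automatically, so $z$ lies in the double-obstacle admissible set and mixed testing against $u$ is legitimate. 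The constraints are then peeled off one at a time via further reference problems $h$ (equation with $\diva(D\psi_1,\cdot)$ on the right) and $w$ (homogeneous), so that each comparison produces the $\Psi_1$, $\Psi_2$ contributions \emph{by construction} rather than through an a posteriori Lewy--Stampacchia bound.

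Second, and more seriously, you propose to run the covering argument on super-level sets of $\mathcal{M}(|Du_i|^{p(\cdot)})$ at intrinsic scales, starting from a cutoff $\lambda_0$ of size $V+V^{1/((\ga-1)(1-\alpha))}$ supplied by the $W^{1,1}$ bound. These do not fit together: there is no uniform-in-$i$ control on $\int_\Omega |Du_i|^{p(x)}\,dx$ (only on $\int_\Omega |Du_i|^{r(x)}\,dx$ for $r(\cdot)<\frac{n(p(\cdot)-1)}{n-1}<p(\cdot)$), so an iteration on $|Du_i|^{p(\cdot)}$-level sets cannot be initialized uniformly, and a $W^{1,1}$-sized $\lambda_0$ is the wrong scale for $|Du_i|^{p(\cdot)}$. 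The paper instead works with the \emph{first-power} maximal function $\mathcal{M}(|Du|)$ throughout: the comparison lemmas are all phrased at the $L^1$ level (averages of $|Du-D\bar v|$), the level sets $C_{N,k}$, $D_{N,k}$ are built from $\mathcal{M}(|Du|)$ and $[\mathcal{M}_1(\kappa)]^{1/(p(x)-1)}$, $[\mathcal{M}_1(\Psi_j)]^{1/(p(x)-1)}$, and the starting level $\lambda_0$ in \eqref{smallness} is exactly $(\varepsilon|B_{R_0}|)^{-1}\bigl(\int_\Omega|Du|\,dx+1\bigr)$. The exponent $(n+1)q$ on the $V$-term then comes out of the choice of $R_0$ in \eqref{r0-1} combined with the energy $L^1$-estimate of Section~\ref{Standard $L^1$-estimate}, not from an intrinsic rescaling. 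To make your sketch go through you would need to rewrite the covering step at the $L^1$ level.
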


\begin{remark}\label{main-rk}
The assumptions $\diva(D\psi_1,\cdot)$, $\diva(D\psi_2,\cdot)$, $\diva(Dg,\cdot)\in L^1(\Omega)$ in Theorem \ref{main theorem} come from the monotonicity condition \eqref{monotonicity}.
If $p^- \ge 2$, we can indeed drop these assumptions (see Remark~\ref{rem-p-const-2}, Remark~\ref{rem-p-const-3} and Section~\ref{cover} below for details). In the case $p^- \ge 2$, we obtain, instead of \eqref{main_r},
\begin{equation*}
\begin{aligned}
\integral{\OO}{|Du|^q}{dx} &\le c \gh{W+ W^{\frac{1}{(\ga-1)(1-\alpha)}}}^{(n+1)q} + c\integral{\OO}{\M_1(\mu)^{\frac{q}{p(x)-1}}}{dx}\\
&\qquad+c\integral{\OO}{\M\gh{|D\psi_1|^{p(\cdot)}}^{\frac{q}{p(x)}}}{dx}
+c\integral{\OO}{\M\gh{|D\psi_2|^{p(\cdot)}}^{\frac{q}{p(x)}}}{dx}+ c,
\end{aligned}
\end{equation*}
where $\M$ is defined by \eqref{max ftn} and
\begin{equation*}
\begin{aligned}
W &:=
|\mu|(\OO)
+
\int_\Omega|D\psi_1|^{p(x)}\ dx
+
\int_\Omega|D\psi_2|^{p(x)}\ dx
+
\int_\Omega|Dg|^{p(x)}\ dx.
\end{aligned}
\end{equation*}
\end{remark}

\begin{remark}\label{main-rk2}
\begin{enumerate}[\rm(i)]
\item The both constants $c$ in Theorem~\ref{main theorem} and Remark~\ref{main-rk} blow up when $\alpha \searrow 0$ (see Remark~\ref{rem-p-const-3.5} below).

\item The occurrence of terms $V, W$ results from the absence of normalization property (roughly speaking, a constant multiple of solution becomes another solution) of our problem with a variable exponent growth $p(\cdot)$. In obstacle problems with a constant growth $p(\cdot)\equiv p$, however, we can employ the normalization property to obtain a more natural estimate than \eqref{main_r} (see Remark~\ref{main rk3} below).
\end{enumerate}
\end{remark}

In \cite{Sch12a,Sch12b}, Scheven proved some potential estimates for measure data problems with a one-sided obstacle and a constant $p$-growth. On the other hand, we are dealing with two-sided obstacles and a variable exponent $p(\cdot)$-growth, which poses many difficulties (see for example Remark~\ref{main-rk2} above) in obtaining the desired estimate \eqref{main_r}. To handle these difficulties, we revisit the maximal function approach in \cite{CP98,Min10,AM05} alongside with $L^1$-comparison estimates (see Section~\ref{comparison} below), a standard energy $L^1$-estimate (see Section~\ref{Standard $L^1$-estimate} below) and a Vitali type covering lemma (see Lemma~\ref{covering} and Section~\ref{Vitali type covering} below). In addition, we obtain \eqref{main_r} on a nonsmooth domain beyond the Lipschitz category. 

We organize this paper as follow. In Section~\ref{Preliminaries}, we introduce standard notation, some function spaces and auxiliary results. Section~\ref{existence} provides the existence of a limit of approximating solutions of the obstacle problem $OP(\psi_1,\psi_2;\mu)$, previously introduced in Definition~\ref{def-limit-of approx-sol}. In Section~\ref{comparison}, we deduce comparison estimates between our obstacle problem and its reference problems. In Section~\ref{cover}, we develop a standard energy $L^1$-estimate, and then we finally complete the proof of Theorem~\ref{main theorem} by using a Vitali type covering lemma.

\section{Preliminaries}\label{Preliminaries}
We start with notation which is used throughout the paper.
Let us denote by $B_r(x)$ the open ball in $\bb^n$ with center $x$ and radius $r>0$. Let $B_r := B_r(0)$, $B_r^+ := B_r\cap \{x \in \bb^n : x_n >0 \}$, $\Omega_r(x) := \Omega\cap B_r(x)$, and $\Omega_r := \Omega\cap B_r.$
For $f \in L_{loc}^1(\bb^n)$, we denote
the integral average of $f$ over a bounded open set $D
\subset \bb^n$ by $\overline{(f)}_{D}$, that is,
$$\overline{(f)}_{D} := \mint{D}{f(x)}{dx} = \frac{1}{|D|} \integral{D}{f(x)}{dx}.$$
For a given real-valued function $f$, we set
$$(f)_+ := \max\mgh{f,0} \quad \text{and} \quad (f)_- := -\min\mgh{f,0}.$$
We denote by $\mathcal{M}_b(\Omega)$  a collection of signed Radon measures on $\Omega$ with $|\mu|(\OO)<\infty$ and
define the truncation operator with $k>0$
\begin{equation}\label{truncation}
T_k(y):=
\left\{\begin{alignedat}{2}
&y &&\quad \mbox{if $|y|\le k$},\\
&k\hspace{0.05cm} {\rm{sgn}}(y) &&\quad \mbox{if $|y|> k$}
\end{alignedat}\right.
\end{equation}
for all $y\in\R$.
For a given function $g\in W^{1,p(\cdot)}(\Omega)$,
a function space $\mathcal{T}_g^{1,p(\cdot)}(\Omega)$
consists of all measurable functions $f:\Omega\rightarrow\R$ such that
$T_k(f-g)\in W^{1,p(\cdot)}_0(\Omega)$ for all $k>0$.

Next, we briefly review Lebesgue and Sobolev spaces with variable exponents.
Let $p(\cdot):\R^n\rightarrow (1,\infty)$ be a continuous function with \eqref{p-range}.
We define the {\em variable exponent Lebesgue space} $L^{p(\cdot)}(\OO)$ as the collection of all measurable functions $f$ on $\Omega$ with
$$\rho_{p(\cdot)}(f) :=\integral{\OO}{|f(x)|^{p(x)}}{dx} < \infty,$$
equipped with the Luxemburg norm
$$\Norm{f}_{L^{p(\cdot)}(\OO)} := \inf \mgh{\theta > 0 :
\integral{\OO}{\left|\frac{f(x)}{\theta}\right|^{p(x)}}{dx} \le 1}.$$
Then there is the following relation between the Luxemburg norm and the integral version:
\begin{equation}\label{norm-int relation}
\min\mgh{\rho_{p(\cdot)}(f)^{\frac{1}{\ga}}, \rho_{p(\cdot)}(f)^{\frac{1}{\gb}}} \le \|f\|_{L^{p(\cdot)}(\OO)} \hspace{-0.6mm} \le \max\mgh{\rho_{p(\cdot)}(f)^{\frac{1}{\ga}}, \rho_{p(\cdot)}(f)^{\frac{1}{\gb}}}.
\end{equation}
Moreover, the {\em variable exponent Sobolev space} $\ww(\OO)$ consists of all functions $f\in W^{1,1}(\Omega)\cap L^{p(\cdot)}(\Omega)$ whose first order derivatives belong to $L^{p(\cdot)}(\Omega)$,
and the norm is defined by
$$\Norm{f}_{\ww(\OO)} := \Norm{f}_{L^{p(\cdot)}(\OO)} +
\Norm{|Df|}_{L^{p(\cdot)}(\OO)}.$$
We denote the
closure of $C_c^{\infty}(\OO)$ in $\ww(\OO)$ by $\wwz(\OO)$, and also
 the dual space of
$W^{1,p(\cdot)}_0(\Omega)$ by $W^{-1,p'(\cdot)}(\Omega)$, where $p'(\cdot):=\frac{p(\cdot)}{p(\cdot)-1}$.
Note that they are all separable reflexive Banach spaces.

We now give a crucial condition on variable exponents $p(\cdot)$ to have some important properties for $L^{p(\cdot)}(\OO)$ and $W^{1,p(\cdot)}(\OO)$.
We say that
$p(\cdot)$ is {\em log-H\"{o}lder continuous} in $\OO$ if there
is a constant $c_l>0$ such that
\begin{equation*}\label{log}
|p(x) - p(y)| \le \frac{c_l}{-\log|x-y|}
\end{equation*}
for all $x, y \in \OO$ with $|x-y|
\le \frac{1}{2}$.
This is equivalent to the existence of a nondecreasing concave function $\omega : [0,\infty) \to
[0,\infty)$ with $\omega(0)=0$ satisfying
\begin{equation*}\label{log1}
|p(x)-p(y)| \le \omega\gh{|x-y|} \quad \text{for all} \ x,y \in \OO,
\end{equation*}
and
\begin{equation*}\label{log2}
\sup_{0<r \le \frac{1}{2}} \omega(r) \log\gh{\frac{1}{r}} \le \tilde{c_l}
\end{equation*}
for some constant $\tilde{c_l}>0$.
If $p(\cdot)$ is log-H\"older continuous, then the variable exponent function spaces introduced above have important properties such as the Sobolev embedding theorem and Poincar\'e's inequality.
For further details on the variable exponent function spaces, we refer to the monographs \cite{DHHR11, CF13}.

We next introduce the {\em Hardy-Littlewood maximal function} as an important tool for the proof of our main result (see Section~\ref{cover} below), defined by
\begin{equation}\label{max ftn}
\M f(x) := \sup_{r>0} \mint{B_r(x)}{|f(y)|}{dy} \quad (f \in L_{loc}^1(\bb^n)).
\end{equation}
If $f$ is defined on a bounded open set $D \subset \bb^n$, then we write
$$\M_{D}f := \M (\chi_D f),$$
where $\chi_D$ is the characteristic function over $D$.
For the sake of simplicity,
we drop the index $D$ when $D = \OO$.
We shall use the following well-known estimates:
\begin{equation*}\label{weak 1-1}
\norm{\mgh{x \in \OO : \M f(x) > \theta}} \le \frac{c(n)}{\theta}
\integral{\OO}{|f|}{dx}\ \ \text{ for every }\theta>0,
\end{equation*}
and for $1<p\le\infty$
\begin{equation*}\label{strong p-p}
\Norm{\M f}_{L^p(\OO)} \le c(n,p) \Norm{f}_{L^p(\OO)}.
\end{equation*}

The following standard measure theoretical property will be used in Section~\ref{proof of main theorem} later:
\begin{lemma}[{\cite[Lemma 7.3]{CC95}}]
\label{distribution2} Let $f$ be a measurable
function on a bounded open set $\OO \subset \bb^n$. Let $\lambda>0$
and $m>1$. Then, for any $0<q<\infty$,
\begin{equation*}\label{distri1}
f \in L^q(\OO) \  \iff \  S := \sum_{k\ge 1} m^{qk} \norm{\mgh{x \in \OO : |f(x)| > \lambda m^{k}}} < \infty,
\end{equation*}
and
\begin{equation*}\label{distri2}
\frac1{c}\lambda^q S \le \integral{\OO}{|f|^q}{dx} \le c\lambda^q \gh{|\OO|+S},
\end{equation*}
for some $c=c(m,q)>0$.
\end{lemma}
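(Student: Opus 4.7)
The plan is to prove Lemma~\ref{distribution2} by the classical layer-cake representation, decomposing the integration variable dyadically along the geometric sequence $\{\lambda m^k\}_{k\ge 0}$. Setting $E_t := \{x \in \Omega : |f(x)|>t\}$, I would start from
\begin{equation*}
\integral{\OO}{|f|^q}{dx} = q \integraL{0}{\infty}{t^{q-1} |E_t|}{dt},
\end{equation*}
split as $q\int_0^\lambda + q\sum_{k=0}^\infty \int_{\lambda m^k}^{\lambda m^{k+1}}$, and exploit monotonicity of $t\mapsto|E_t|$ on each slab.

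For the upper bound, on $t\in[\lambda m^k,\lambda m^{k+1}]$ I use $|E_t|\le|E_{\lambda m^k}|$, giving
\begin{equation*}
q \integraL{\lambda m^k}{\lambda m^{k+1}}{t^{q-1}|E_t|}{dt} \le \lambda^q m^{qk}(m^q-1)\,|E_{\lambda m^k}|,
\end{equation*}
while the piece over $[0,\lambda]$ contributes at most $\lambda^q|\OO|$. Summing over $k\ge 0$ and isolating the $k=0$ term (which is bounded by $\lambda^q(m^q-1)|\OO|$), the remaining tail is exactly $\lambda^q(m^q-1)S$, yielding $\int_\OO|f|^q \le c\lambda^q(|\OO|+S)$ with $c=c(m,q)$.

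For the lower bound, on the same slab I instead use $|E_t|\ge|E_{\lambda m^{k+1}}|$, so
\begin{equation*}
q \integraL{\lambda m^k}{\lambda m^{k+1}}{t^{q-1}|E_t|}{dt} \ge \lambda^q m^{qk}(m^q-1)\,|E_{\lambda m^{k+1}}|.
\end{equation*}
Summing $k\ge 0$ and reindexing $j=k+1$ produces
\begin{equation*}
\integral{\OO}{|f|^q}{dx} \ge \lambda^q\,\frac{m^q-1}{m^q}\sum_{j\ge 1} m^{qj}|E_{\lambda m^j}| = \lambda^q\,\frac{m^q-1}{m^q}\,S,
\end{equation*}
which is the claimed $\frac{1}{c}\lambda^q S \le \int_\OO|f|^q$. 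The equivalence $f\in L^q(\OO)\iff S<\infty$ follows from the two-sided bound: if $f\in L^q$ the lower estimate forces $S<\infty$, and if $S<\infty$ the upper estimate (combined with $|\OO|<\infty$) forces $f\in L^q$. There is no substantive obstacle here; the only mild care needed is to handle the initial slab $[0,\lambda]$ separately (it contributes the $|\OO|$-term on the right of the upper bound but nothing useful to the lower bound) and to make sure the reindexing aligns the summation with $S$ as defined (starting at $k=1$).
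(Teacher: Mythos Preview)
Your proof is correct and is the standard layer-cake argument for this kind of statement. Note that the paper does not actually supply its own proof of Lemma~\ref{distribution2}; it is quoted from \cite[Lemma~7.3]{CC95}, where the same dyadic decomposition of the distribution function is used, so there is nothing to compare.
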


We end this section with a Vitali type covering lemma as follows:
\begin{lemma}[{\cite[Theorem 2.8]{BW04}}] \label{covering}
Let $0<\varepsilon<1$, $0<\delta\le\frac1{8}$, and $R>0$.
We suppose that
$\OO\subset\R^n$ is
$(\delta,R)$-Reifenberg flat.
For measurable sets $C$ and $D$ with $C \subset D\subset \OO$
and for $0< R_0 \le R$,
the two followings hold:
\begin{enumerate}[\rm(i)]
\item $|C| \le \gh{\frac{1}{1000}}^n \varepsilon |B_{R_0}|$, and

\item if $|C \cap B_{r}(x_o)| \ge \varepsilon |B_{r}(x_o)|$ for $x_o \in \OO$ and $r \in \big(0, \frac{R_0}{1000}\big]$, then
$B_{r}(x_o) \cap \OO \subset D$.
\end{enumerate}
Then we have
$$|C| \le \varepsilon \gh{\frac{10}{1-\delta}}^n  |D| \le
\varepsilon\gh{\frac{80}{7}}^n |D|.$$
\end{lemma}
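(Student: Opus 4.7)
The plan is to carry out a Calder\'on--Zygmund type stopping time selection and combine it with the standard Vitali covering lemma together with the measure density property of Reifenberg flat domains recorded earlier in the paper.

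First, for each density point $x\in C$ (which exhausts $C$ up to a null set by the Lebesgue differentiation theorem), consider the continuous function $f_x(r):=|C\cap B_r(x)|/|B_r(x)|$. It satisfies $f_x(r)\to 1$ as $r\to 0^+$, while hypothesis (i) together with $|B_{R_0/1000}|=(1/1000)^n|B_{R_0}|$ gives
$$f_x(R_0/1000)\;\le\;\frac{|C|}{|B_{R_0/1000}|}\;\le\;\varepsilon.$$
Hence by the intermediate value theorem there is a largest $r_x\in(0,R_0/1000]$ with $f_x(r_x)=\varepsilon$, and by maximality $f_x(r)<\varepsilon$ for all $r\in(r_x,R_0/1000]$. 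At this radius the density is $\ge\varepsilon$, so hypothesis (ii) applies and gives $B_{r_x}(x)\cap\Omega\subset D$.

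Next, the family $\{B_{r_x}(x)\}$ has radii uniformly bounded by $R_0/1000$, so the classical Vitali covering lemma yields a countable disjoint subcollection $\{B_{r_i}(x_i)\}_i$ such that $C$ is contained, up to a null set, in $\bigcup_i B_{5r_i}(x_i)$. For each $i$ one checks that
$$|C\cap B_{5r_i}(x_i)|\;\le\;5^n\varepsilon\,|B_{r_i}(x_i)|:$$
if $5r_i\le R_0/1000$ this follows from maximality of $r_i$, while if $5r_i>R_0/1000$ one uses $|C\cap B_{5r_i}(x_i)|\le|C|\le \varepsilon|B_{R_0/1000}|\le \varepsilon|B_{5r_i}(x_i)|$ directly from (i).

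Finally, since $x_i\in C\subset\Omega$ and $r_i\le R_0\le R$, the Reifenberg flat measure density from the remark following the definition yields
$$|B_{r_i}(x_i)|\;\le\;\left(\tfrac{2}{1-\delta}\right)^{\!n}|B_{r_i}(x_i)\cap\Omega|\;\le\;\left(\tfrac{2}{1-\delta}\right)^{\!n}|B_{r_i}(x_i)\cap D|,$$
where the last inequality uses $B_{r_i}(x_i)\cap\Omega\subset D$ from (ii). Summing, using disjointness of $\{B_{r_i}(x_i)\}$, and combining with the previous display gives
$$|C|\;\le\;\sum_i|C\cap B_{5r_i}(x_i)|\;\le\;5^n\varepsilon\sum_i|B_{r_i}(x_i)|\;\le\;\left(\tfrac{10}{1-\delta}\right)^{\!n}\varepsilon\,|D|,$$
and $\delta\le 1/8$ converts the constant into $(80/7)^n$. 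The one subtle point is the stopping-time step: it is precisely hypothesis (i) that guarantees the density $f_x$ drops to at most $\varepsilon$ at scale $R_0/1000$, which is what lets the selection terminate inside the admissible range of radii and lets (ii) be invoked at the chosen $r_x$; everything else is bookkeeping.
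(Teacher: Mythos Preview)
Your proof is correct. The paper itself does not give a proof of this lemma; it merely cites \cite[Theorem~2.8]{BW04}. Your argument---Lebesgue density points to initialize, a stopping-time selection of the largest radius at which the density hits $\varepsilon$, the classical Vitali $5$-covering, and then the Reifenberg measure-density bound $|B_{r_i}(x_i)|\le(2/(1-\delta))^n|\Omega\cap B_{r_i}(x_i)|$ to pass from balls to $D$---is precisely the standard route and is essentially how the cited result in \cite{BW04} is proved. All steps check out, including the case split on whether $5r_i$ exceeds $R_0/1000$, and the constants track correctly to $(10/(1-\delta))^n\le(80/7)^n$.
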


\section{Existence for a limit of approximating solutions}\label{existence}

In this section we derive the existence of a limit of approximating solutions (introduced in Definition~\ref{def-limit-of approx-sol}) of the obstacle problem $OP(\psi_1,\psi_2;\mu)$.
We first recall the truncation function $T_k$ in \eqref{truncation} and introduce another truncation function
\begin{equation}\label{trun}
\Phi_k(t) := T_1 \left(t-T_k(t)\right) \hspace{2mm} \text{for}
\hspace{1mm} t \in \bb.
\end{equation}

With $\mu_i \in W^{-1,p'(\cdot)}(\Omega)\cap L^1(\Omega)$ under the assumption
\begin{equation}\label{mu_i-bound}
K:=\sup_{i\in\N}\|\mu_i\|_{L^1(\Omega)} <\infty,
\end{equation}
we consider the weak solution $u_i\in\A$ to the variational inequality
\begin{equation}\label{u_i-op}
\int_{\Omega} \ma(x,Du_i)\cdot(D\varphi-Du_i) \ dx \ge \int_{\Omega}\mu_i(\varphi-u_i) \ dx \quad\mbox{for all $\varphi\in\A$,}
\end{equation}
where $g\in W^{1,p(\cdot)}(\Omega)$ is given with $\psi_1\le g\le\psi_2$ a.e. in $\Omega$ and
$$\A:=\mgh{\varphi\in g+W_0^{1,p(\cdot)}(\Omega):\psi_1\le \varphi\le\psi_2 \mbox{ a.e. in $\Omega$} }.$$
We shall show that $u_i \rightarrow u$  in the sense of \eqref{seq-converge} for some $u\in\mathcal{T}_g^{1,p(\cdot)}(\Omega)$ with $\psi_1\le u\le\psi_2$ a.e. in $\Omega$. To see this, we first introduce the following technical lemma (see \cite[Lemma 2.1]{BW09} and \cite[Lemma 4.1]{BH10}):
\begin{lemma}\label{lem-tech-p(x)}
Let $f\in W^{1,p(\cdot)}_0(\Omega)$.
Suppose that there is a constant $c_o$ such that
\begin{align*}
\int_{\{k\le|f|\le k+1\}} |Df|^{p(x)}\ dx \le c_o \quad\mbox{for all $k>0$.}
\end{align*}
Then there exists a constant $c=c(c_o)>0$ such that
$$\|f\|_{W^{1,r(\cdot)}_0(\Omega)}\le c$$
for every continuous function $r(\cdot)$ on $\overline{\Omega}$ satisfying
\begin{equation}\label{r-range}
1\le r(\cdot)<\min\mgh{\frac{n(p(\cdot)-1)}{n-1}, p(\cdot)}.
\end{equation}
\end{lemma}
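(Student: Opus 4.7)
The plan is to adapt the classical Boccardo--Gallou\"et truncation scheme to the variable exponent framework, along the lines of \cite{BW09,BH10}. First, summing the shell bound in the hypothesis over $j=0,1,\ldots,k-1$ yields the familiar energy estimate
\[
\int_\Omega |DT_k(f)|^{p(x)}\,dx \le c_o\, k \qquad \text{for every integer } k\ge 1,
\]
where $T_k$ is the truncation \eqref{truncation}. Since $p(\cdot)$ is log-H\"older continuous by \eqref{log-holder}, the variable exponent Sobolev--Poincar\'e inequality applies to $T_k(f)\in W^{1,p(\cdot)}_0(\Omega)$; passing between the modular and the Luxemburg norm via \eqref{norm-int relation} then yields $\|T_k(f)\|_{L^{p^*(\cdot)}(\Omega)}\le C\, k^{1/p^-}$. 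Because $|T_k(f)|=k$ on $\{|f|>k\}$, a Chebyshev-type argument gives a super-level set decay of the form $|\{|f|>k\}|\le C\,k^{-\sigma}$ for a positive exponent $\sigma=\sigma(n,p^-,p^+)$.

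Next, establish a Marcinkiewicz-type decay for $|Df|$: write
\[
\bigl|\{|Df|>\lambda\}\bigr| \le \bigl|\{|Df|>\lambda,\ |f|\le h\}\bigr| + \bigl|\{|f|>h\}\bigr|,
\]
bound the first term (for $\lambda\ge 1$) by $c_o\, h\, \lambda^{-p^-}$ by applying the energy estimate on $\{|f|\le h\}$, control the second by the level-set decay just obtained, and optimize in $h>0$. This yields $|\{|Df|>\lambda\}|\le C\,\lambda^{-n(p^--1)/(n-1)}$; the exponent is strictly greater than $1$ precisely because \eqref{p-range} forces $p^->2-\tfrac1n$. Finally, a layer-cake integration in the spirit of Lemma~\ref{distribution2}, combined with the uniform quantitative gap furnished by the continuity of $r(\cdot),\,p(\cdot)$ and compactness of $\overline{\Omega}$ in the pointwise constraint \eqref{r-range}, converts the weak-type bounds for $|f|$ and $|Df|$ into the required strong bound $\|f\|_{W^{1,r(\cdot)}_0(\Omega)}\le c(c_o)$.

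The main obstacle will be that the critical thresholds $p(\cdot)$ and $n(p(\cdot)-1)/(n-1)$ are variable, while the Marcinkiewicz estimate above naturally uses only the worst exponent $p^-$; matching the pointwise constraint \eqref{r-range} then requires either localizing the truncation argument on level sets of $p(\cdot)$ or exploiting the compactness-driven uniform gap together with continuous embeddings between variable and constant exponent Lebesgue spaces. Once this bookkeeping is in place, the remaining calculation is routine.
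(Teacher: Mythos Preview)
The paper does not give its own proof of this lemma; it simply cites \cite[Lemma~2.1]{BW09} and \cite[Lemma~4.1]{BH10}. Your outline follows exactly the Boccardo--Gallou\"et scheme used in those references, so at the level of strategy you are on the right track.

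However, your proposal has a genuine gap, and you identify it yourself: the global Marcinkiewicz bound you derive,
\[
\bigl|\{|Df|>\lambda\}\bigr|\le C\,\lambda^{-\frac{n(p^--1)}{n-1}},
\]
only controls $\int_\Omega |Df|^{r(x)}\,dx$ when $r^+<\tfrac{n(p^--1)}{n-1}$. The pointwise constraint \eqref{r-range} allows $r(x)$ to exceed $\tfrac{n(p^--1)}{n-1}$ wherever $p(x)>p^-$, so the global weak-type estimate is simply not strong enough. You flag this as ``the main obstacle'' and offer two possible remedies, but you do not carry either of them out; saying ``once this bookkeeping is in place, the remaining calculation is routine'' is not a proof.

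The fix used in \cite{BW09,BH10} is your first option, and it is not mere bookkeeping: one covers $\overline{\Omega}$ by finitely many open sets $U_j$ on which $\osc_{U_j} p(\cdot)<\varepsilon$, and then reruns the truncation/Sobolev/Marcinkiewicz argument \emph{locally} on each $U_j$ with the constant exponent $p_j^-:=\inf_{U_j}p$. The uniform gap coming from compactness and \eqref{r-range} then guarantees $\sup_{U_j}r<\tfrac{n(p_j^--1)}{n-1}$ for $\varepsilon$ small enough, so the local weak-type exponent suffices on each piece. Your second option (``continuous embeddings between variable and constant exponent Lebesgue spaces'') does not by itself close the gap, because any such embedding still loses down to the worst exponent $p^-$ globally. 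You need to actually execute the localization.
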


Now we derive some uniform bounds for the variational inequality \eqref{u_i-op} in the variable exponent setting.
For the constant exponent case $p(\cdot)\equiv p$,  we refer to \cite[Lemma 3.3]{Sch12a} (see also \cite{BG89,BBGGPV95}).
\begin{lemma}\label{lem-grad-bd}
Let $u_i\in\A$ be the weak solution to the variational inequality \eqref{u_i-op} with \eqref{mu_i-bound}.
Then there exists a constant $c_1$ depending only on $\data$ such that
\begin{equation}\label{DT_k}
\int_\Omega \big|D\big[T_k\big(u_i-g\big)\big]\big|^{p(x)} \ dx \le c_1 kK+c_1\int_\Omega |Dg|^{p(x)} \ dx
\end{equation}
for any $k>0$.
Moreover, we have
\begin{equation}\label{Du-bound}
\|u_i-g\|_{W^{1,r(\cdot)}_0(\Omega)} \le c_2
\end{equation}
for some $c_2=c_2(\data, K,\|Dg\|_{L^{p(\cdot)}(\Omega)})>0$ and for any continuous function $r(\cdot)$ on $\overline\Omega$ with \eqref{r-range}.
\end{lemma}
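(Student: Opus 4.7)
The plan is to test the variational inequality \eqref{u_i-op} against suitable truncated competitors built from $u_i$ and $g$. For \eqref{DT_k} I would use $\varphi = u_i - T_k(u_i-g)$; for the $W^{1,r(\cdot)}$ bound \eqref{Du-bound} I would use the annular truncation $\varphi = u_i - \Phi_k(u_i-g)$ with $\Phi_k$ as in \eqref{trun}, and then feed the resulting level set estimates into Lemma~\ref{lem-tech-p(x)}.

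First step: verify admissibility of $\varphi = u_i - T_k(u_i-g)$ in $\mathcal{A}$. It belongs to $g + W^{1,p(\cdot)}_0(\Omega)$ because $\varphi - g = (u_i-g) - T_k(u_i-g) \in W^{1,p(\cdot)}_0(\Omega)$. For the obstacle constraints, split into the regions $\{|u_i-g|\le k\}$ (where $\varphi = g$ and $\psi_1\le g\le\psi_2$), $\{u_i-g > k\}$ (where $\varphi = u_i-k$ and $u_i > g + k \ge \psi_1 + k$, together with $\varphi \le u_i \le \psi_2$), and the symmetric region. Plugging $\varphi$ into \eqref{u_i-op} gives
\begin{equation*}
\int_\Omega \mathbf{a}(x,Du_i)\cdot D[T_k(u_i-g)]\ dx \le \int_\Omega \mu_i\, T_k(u_i-g)\ dx \le k K,
\end{equation*}
since $|T_k|\le k$. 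On the left, $D[T_k(u_i-g)] = (Du_i - Dg)\chi_{\{|u_i-g|\le k\}}$; splitting the integrand, using the coercivity $\mathbf{a}(x,\xi)\cdot\xi \ge \tilde\Lambda|\xi|^{p(x)}$ (which follows from setting $\eta=0$ in \eqref{monotonicity}) and Young's inequality on the cross term $|\mathbf{a}(x,Du_i)\cdot Dg| \lesssim \varepsilon|Du_i|^{p(x)} + C(\varepsilon)|Dg|^{p(x)}$ to absorb the $\varepsilon$-term, I would obtain an upper bound on $\int_{\{|u_i-g|\le k\}}|Du_i|^{p(x)}\,dx$. Then $|D[T_k(u_i-g)]|^{p(x)} \lesssim |Du_i|^{p(x)} + |Dg|^{p(x)}$ on the same set gives \eqref{DT_k}.

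For \eqref{Du-bound}, I would repeat the argument with $\varphi = u_i - \Phi_k(u_i-g)$. The admissibility check is analogous: on $\{k < u_i-g \le k+1\}$ we get $\varphi = g + (u_i-g-k)(\cdot)$ hence $g \le \varphi \le u_i$, and on $\{u_i-g > k+1\}$ we get $\varphi = u_i - 1$ with $u_i > g + k + 1 \ge \psi_1 + 1$. Since $|\Phi_k(u_i-g)|\le 1$, the right-hand side is now bounded by $K$ \emph{independently of $k$}, and $D\Phi_k(u_i-g) = (Du_i - Dg)\chi_{\{k<|u_i-g|\le k+1\}}$. The same coercivity/Young absorption yields
\begin{equation*}
\int_{\{k\le|u_i-g|\le k+1\}} |D(u_i-g)|^{p(x)}\ dx \le c_o\bigl(K + \textstyle\int_\Omega |Dg|^{p(x)}\,dx\bigr)
\end{equation*}
uniformly in $k\ge 0$ (the $k=0$ piece $\{|u_i-g|\le 1\}$ is already controlled by \eqref{DT_k} with $k=1$). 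Applying Lemma~\ref{lem-tech-p(x)} to $f = u_i-g \in W^{1,p(\cdot)}_0(\Omega)$ delivers \eqref{Du-bound}.

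The main obstacle will be bookkeeping the admissibility of the test functions: one has to use both obstacle inequalities $\psi_1\le u_i$ and $u_i\le\psi_2$ together with $\psi_1\le g\le\psi_2$ to certify that adding or subtracting a bounded truncation keeps the competitor pinched between $\psi_1$ and $\psi_2$. Everything else is routine $p(x)$-Young/coercivity manipulation and relies on the log-Hölder continuity of $p(\cdot)$ only indirectly through the Luxemburg--integral equivalence \eqref{norm-int relation} used inside Lemma~\ref{lem-tech-p(x)}.
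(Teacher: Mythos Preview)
Your proposal is correct and takes essentially the same approach as the paper: identical test functions (since $T_k$ and $\Phi_k$ are odd, your $u_i - T_k(u_i-g)$ and $u_i - \Phi_k(u_i-g)$ coincide with the paper's $w_i = u_i + T_k(g-u_i)$ and $v_i = u_i + \Phi_k(g-u_i)$), the same admissibility via $\psi_1\le\min\{u_i,g\}\le\varphi\le\max\{u_i,g\}\le\psi_2$, and the same appeal to Lemma~\ref{lem-tech-p(x)}. The only cosmetic difference is that the paper subtracts $\mathbf{a}(Dg,x)$ and invokes the monotonicity \eqref{monotonicity} with a case split on $\{p(x)\ge2\}$ versus $\{p(x)<2\}$, whereas your direct coercivity-plus-Young argument avoids that split; both routes yield \eqref{DT_k} with constants depending only on $\data$.
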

\begin{proof}
We first define for $k>0$
$$ w_i:=u_i+T_k(g-u_i) \quad\mbox{and}\quad  v_i: = u_i + \Phi_k \big(g-u_i \big).$$
Then we have $w_i, v_i\in\A$, since $w_i, v_i\in g+W_0^{1,p(\cdot)}(\Omega)$ and
$$\psi_1\le\min\{u_i,g\}\le w_i,v_i\le\max\{u_i,g\}\le\psi_2 \ \ \text{a.e. in}\  \Omega.$$
Now we take $\varphi=w_i$ in \eqref{u_i-op}, to have
\begin{equation*}
\int_\Omega \mathbf{a}(Du_i,x) \cdot D\big[T_k \big( u_i-g \big)\big] \ dx
\le
\int_\Omega \mu_i T_k \big(u_i - g \big)\ dx.
\end{equation*}
We employ \eqref{mu_i-bound}, \eqref{growth} and Young's inequality to discover
\begin{equation}
\begin{aligned}\label{a-a-1}
&\int_{\{|u_i -g |<k\}} (\mathbf{a}(Du_i,x)-\mathbf{a}(Dg,x)) \cdot(Du_i-Dg) \ dx\\
&\quad\le
k\int_\Omega | \mu_i |\ dx
+ \int_{\{|u_i -g |<k\}}|\mathbf{a}(Dg, x)| |Du_i-Dg| \ dx \\
&\quad\le
kK + c(\varepsilon)\int_\Omega |Dg|^{p(x)}\ dx + \varepsilon\int_{\{|u_i -g |<k\}} |Du_i-Dg|^{p(x)}\ dx
\end{aligned}
\end{equation}
for any $\varepsilon>0$.
We now set
$$\Omega^+:=\{x\in\Omega: p(x)\ge2 \ \mbox{and} \ |u_i-g|<k\}$$
and
$$\Omega^-:=\{x\in\Omega: p(x)<2 \ \mbox{and} \ |u_i-g|<k\}.$$
Then it follows from \eqref{monotonicity} that
\begin{align}\label{3-p>2}
\int_{\Omega^+} |Du_i-Dg|^{p(x)}\ dx
\le
c\int_{\Omega^+} (\mathbf{a}(Du_i,x)-\mathbf{a}(Dg,x)) \cdot(Du_i-Dg) \ dx
\end{align}
for some $c=c(\data)>0$.
In addition, \eqref{monotonicity} and Young's inequality yield
\begin{equation}\label{3-p<2}
\begin{aligned}
&\int_{\Omega^-} |Du_i-Dg|^{p(x)}\ dx\\
&\quad\le
c\int_{\Omega^-} (\mathbf{a}(Du_i,x)-\mathbf{a}(Dg,x) )\cdot(Du_i-Dg) \ dx
+
c\int_\Omega |Dg|^{p(x)}\ dx
\end{aligned}
\end{equation}
for some $c=c(\data)>0$.
Finally we combine \eqref{a-a-1}--\eqref{3-p<2} and take $\varepsilon$ with $c\varepsilon\le\frac12$, to obtain \eqref{DT_k}.

Next, by taking $\varphi=v_i$ in \eqref{u_i-op}, we have
\begin{equation*}
\int_\Omega \mathbf{a}(Du_i,x) \cdot D\big[\Phi_k \big( u_i-g \big)\big] \ dx
\le
\int_\Omega \mu_i \Phi_k \big(u_i - g \big)\ dx.
\end{equation*}
Proceeding analogously to the proof of \eqref{DT_k}, we deduce
\begin{align*}
\int_{\{k<|u_i -g |<k+1\}} |Du_i-Dg|^{p(x)}\ dx
\le
cK + c\int_\Omega |Dg|^{p(x)}\ dx
\end{align*}
for all $k>0$, where $c$ is a positive constant depending only on $\data$.
Applying Lemma~\ref{lem-tech-p(x)}, we obtain \eqref{Du-bound} as desired.
\end{proof}

Finally, we obtain the existence of a limit of approximating solutions of the obstacle problem $OP(\psi_1,\psi_2;\mu)$.
\begin{lemma}\label{lem-exist}
Let $u_i\in\A$ be the weak solution to the variational inequality \eqref{u_i-op} satisfying \eqref{mu_i-bound}.
Then there exist a subsequence $\{i_j\}\subset\N$ and a function $u\in\mathcal{T}^{1,p(\cdot)}_g(\Omega)$ with $\psi_1\le u\le\psi_2$ a.e. in $\Omega$ such that
$$u_{i_j}\rightarrow u \quad in \quad W^{1,r(\cdot)}(\Omega)$$
for all continuous functions $r(\cdot)$ on $\overline{\Omega}$ with \eqref{r-range}.
\end{lemma}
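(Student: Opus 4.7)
By Lemma~\ref{lem-grad-bd}, the sequence $\{u_i-g\}$ is uniformly bounded in $W^{1,r(\cdot)}_0(\Omega)$ for every continuous $r(\cdot)$ in the range \eqref{r-range}. My plan is first to fix such an $r(\cdot)$ with $r^->1$ and, by reflexivity, extract a subsequence with $u_{i_j}-g \rightharpoonup v$ weakly in $W^{1,r(\cdot)}_0(\Omega)$. A Rellich compactness argument together with a diagonal extraction then delivers $u_{i_j}\to u:=g+v$ strongly in $L^{r(\cdot)}(\Omega)$ and a.e.\ in $\Omega$, and the pointwise convergence immediately transfers the two-sided obstacle constraint, yielding $\psi_1\le u\le\psi_2$ a.e.

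To place $u$ in $\mathcal{T}^{1,p(\cdot)}_g(\Omega)$, I would freeze $k>0$ and exploit the truncation bound \eqref{DT_k} to extract a weak limit of $T_k(u_{i_j}-g)$ in $W^{1,p(\cdot)}_0(\Omega)$; an a.e.\ identification of this limit with $T_k(u-g)$, together with a further diagonal extraction over $k\in\N$, forces $T_k(u-g)\in W^{1,p(\cdot)}_0(\Omega)$ for every $k>0$. For the a.e.\ convergence of the gradients I plan to use the Cauchy variant of the Boccardo--Murat monotonicity argument adapted to obstacles. The comparison maps $\varphi_1:=u_{i_j}+T_k(u_{i_k}-u_{i_j})$ and $\varphi_2:=u_{i_k}+T_k(u_{i_j}-u_{i_k})$ both lie in $\A$, since $T_k$ keeps its argument sandwiched between $0$ and the plain difference $u_{i_k}-u_{i_j}$. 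Testing \eqref{u_i-op} at index $i_j$ with $\varphi_1$ and at index $i_k$ with $\varphi_2$, summing the two inequalities, and using \eqref{mu_i-bound} together with $\|T_k\|_\infty\le k$ should produce
\begin{equation*}
\int_{\{|u_{i_j}-u_{i_k}|<k\}} \bigl(\ma(Du_{i_j},x)-\ma(Du_{i_k},x)\bigr)\cdot(Du_{i_j}-Du_{i_k})\,dx \le 2kK.
\end{equation*}
Combined with the a.e.\ convergence $u_{i_j}\to u$ and the two-regime monotonicity \eqref{monotonicity}, a standard level-set analysis then shows that $\{Du_{i_j}\}$ is Cauchy in measure, so along a further subsequence $Du_{i_j}\to Du$ a.e.

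To upgrade the pointwise gradient convergence to convergence in $W^{1,r(\cdot)}(\Omega)$, I would reapply Lemma~\ref{lem-grad-bd} with an exponent $r'(\cdot)$ strictly between $r(\cdot)$ and the endpoint in \eqref{r-range}: the uniform bound $\|u_{i_j}-g\|_{W^{1,r'(\cdot)}_0(\Omega)}\le c_2$ supplies the equi-integrability of $|Du_{i_j}|^{r(\cdot)}$ needed to invoke Vitali's convergence theorem. The main obstacle is precisely the a.e.\ gradient step: the one-sided nature of \eqref{u_i-op}, combined with the fact that the limit $u$ need not lie in $W^{1,p(\cdot)}(\Omega)$, rules out the classical test function $T_k(u_i-u)$ and forces the symmetric Cauchy variant above; moreover, the variable exponent growth requires splitting the monotonicity estimate between $\{p(x)\ge 2\}$ and $\{p(x)<2\}$, each handled with its own Young-type interpolation, exactly in the spirit of \eqref{3-p>2}--\eqref{3-p<2}.
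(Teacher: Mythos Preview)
Your proposal is correct and follows essentially the same route as the paper: the uniform bounds from Lemma~\ref{lem-grad-bd}, Rellich compactness to identify $u$, the truncation estimate \eqref{DT_k} to place $u$ in $\mathcal{T}^{1,p(\cdot)}_g(\Omega)$, and the symmetric Cauchy test functions $u_i+T_k(u_j-u_i)$ yielding the $2kK$ monotonicity bound, split over $\{p(x)\ge2\}$ and $\{p(x)<2\}$. The only cosmetic difference is the endgame: the paper bounds $\int_\Omega|Du_i-Du_j|^{r(x)}\,dx$ directly by decomposing over $\{|u_i-u_j|\lessgtr k\}$ (using an auxiliary exponent $\hat r(\cdot)>r(\cdot)$ exactly as you use $r'(\cdot)$), while you reach the same conclusion via Cauchy-in-measure plus Vitali; both are standard and rest on the identical ingredients.
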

\begin{proof}
The idea of the proof follows from \cite[Lemma 3.4]{Sch12a}. For the sake of completeness, we give the proof.
In view of \eqref{DT_k} and \eqref{norm-int relation}, for any fixed $k>0$, the sequence $\mgh{T_k(u_i-g)}_{i\in\N}$ is uniformly bounded in $W^{1,p(\cdot)}_0(\Omega)$.
Since $W^{1,p(\cdot)}_0(\Omega)\hookrightarrow L^{p(\cdot)}(\Omega)$ is continuous compact embedding, we can assume that
\begin{equation}\label{Tu}
\mbox{$\mgh{T_k(u_i-g)}_{i\in\N}$ is a Cauchy sequence in $L^{p(\cdot)}(\Omega)$}
\end{equation}
for all $k>0$.
Next, according to \eqref{Du-bound}, there are a subsequence of $\{u_i\}_{i\in\N}$, still denoted by $\{u_i\}_{i\in\N}$, and a function $u\in g+W^{1,r(\cdot)}_0(\Omega)$ such that
\begin{equation}\label{seq}
\left\{\begin{alignedat}{2}
u_i\rightarrow u &\quad \mbox{a.e. in $\Omega$},\\
u_i\rightarrow u &\quad \mbox{in $L^{r(\cdot)}(\Omega)$}.
\end{alignedat}\right.
\end{equation}
This implies that $\psi_1\le u\le\psi_2$ a.e. in $\Omega$, and that $T_k(u_i-g)\rightarrow T_k(u-g)$ a.e. in $\Omega$ for all $k>0$.
Then we deduce
\begin{equation*}
T_k(u_i-g)\rightarrow T_k(u-g) \quad \mbox{in $L^{p(\cdot)}(\Omega)$ as $i\rightarrow\infty$}
\end{equation*}
for all $k>0.$
Again using \eqref{DT_k}, we can also assume that
\begin{equation}\label{T_k(u-g)}
T_k(u_i-g)\rightharpoonup T_k(u-g) \quad \mbox{in $W^{1,p(\cdot)}_0(\Omega)$ as $i\rightarrow\infty$},
\end{equation}
and so in particular $u\in\mathcal{T}^{1,p(\cdot)}_g(\Omega)$.

Now we choose any continuous function $r(\cdot)$ on $\overline{\Omega}$ with \eqref{r-range}
and write for fixed $i,j\in\N$,
\begin{equation}
\begin{aligned}\label{I+II}
\int_\Omega |Du_i-Du_j|^{r(x)}\ dx
&=
\int_{\{|u_i-u_j|>k\}}|Du_i-Du_j|^{r(x)}\ dx \\
&\qquad+
\int_{\{|u_i-u_j|\le k\}}|Du_i-Du_j|^{r(x)}\ dx\\
&=: I_{ij}^{(k)} + II_{ij}^{(k)}.
\end{aligned}
\end{equation}
Taking a continuous function $\hat{r}(\cdot)$ such that $r(\cdot)<\hat{r}(\cdot)<\min\mgh{\frac{n(p(\cdot)-1)}{n-1}, p(\cdot)}$,
we discover
\begin{equation}
\begin{aligned}\label{I-est}
I_{ij}^{(k)}
&\le
\int_{\{|u_i-u_j|>k\}}\varepsilon\gh{|Du_i|+|Du_j|}^{\hat{r}(x)} + c(\varepsilon)\ dx\\
&\le
\varepsilon c(n,\ga,\gb)\gh{\int_{\Omega}|Du_i|^{\hat{r}(x)}\ dx
+
\int_{\Omega}|Du_j|^{\hat{r}(x)}\ dx} \\
&\qquad+
c(\varepsilon)|\{x \in\OO : |u_i-u_j|>k\}|
\end{aligned}
\end{equation}
for any $\varepsilon>0.$
Here the two integrals on the right-hand side above are bounded independently from $i,j\in\N$ by \eqref{Du-bound}. Also, \eqref{seq} implies that $\{u_i\}$ is a Cauchy sequence in $L^1(\Omega)$. Therefore
$$
\lim_{i,j\rightarrow\infty} |\{x\in\Omega:|u_i-u_j|>k\}|
\le
\lim_{i,j\rightarrow\infty} \frac1k\int_{\Omega}|u_i-u_j|\ dx =0
\quad\mbox{for every $k>0$}.
$$
This and \eqref{I-est} yield
\begin{equation}\label{I-est-1}
\lim_{i,j\rightarrow\infty}I_{ij}^{(k)}=0\quad\mbox{for any $k>0$}.
\end{equation}
Next, for the estimate $II_{ij}^{(k)}$ we write
\begin{align}\label{II-est}
II_{ij}^{(k)}
=
\int_{\Omega_{ij}^+} |Du_i-Du_j|^{r(x)}\ dx
+
\int_{\Omega_{ij}^-} |Du_i-Du_j|^{r(x)}\ dx,
\end{align}
where
$$\Omega_{ij}^+ :=\mgh{x\in\Omega:|u_i-u_j|\le k \ \text{and} \ p(x)\ge2}$$
and
$$\Omega_{ij}^- :=\mgh{x\in\Omega:|u_i-u_j|\le k \ \text{and} \ p(x)<2}.$$
Now we use a comparison function $\varphi=u_i+T_k(u_j-u_i)$ in the variational inequality \eqref{u_i-op} for $u_i$, while we choose $\varphi=u_j+T_k(u_i-u_j)$ in the version for $u_j$. Then we infer that for any $k>0$
\begin{equation*}
\int_\Omega ( \mathbf{a}(Du_i,x) - \mathbf{a}(Du_j,x) )\cdot D\big[ T_k(u_i-u_j) \big]\ dx
\le
\int_\Omega (\mu_i-\mu_j)T_k(u_i-u_j)\ dx.
\end{equation*}
It follows from \eqref{monotonicity} and \eqref{mu_i-bound} that
\begin{equation}\label{mono-est}
\int_{\Omega_{ij}^+} |Du_i-Du_j|^{p(x)}\ dx \le 2kK
\end{equation}
and
\begin{equation}\label{mono-est-1}
\int_{\Omega_{ij}^-} (|Du_i|^2+|Du_j|^2)^{\frac{p(x)-2}2}|Du_i-Du_j|^2 \ dx \le 2kK.
\end{equation}
Keeping in mind $r(\cdot)<p(\cdot)$, the first integral on the right-hand side in \eqref{II-est} yields from Young's inequality and \eqref{mono-est} that for any $\varepsilon>0$
\begin{equation}
\begin{aligned}\label{II-1}
\int_{\Omega_{ij}^+} |Du_i-Du_j|^{r(x)}\ dx
&\le
c(\varepsilon)\int_{\Omega_{ij}^+} |Du_i-Du_j|^{p(x)}\ dx + \varepsilon|\Omega|\\
&\le
c(\varepsilon)kK + \varepsilon|\Omega|.
\end{aligned}
\end{equation}
For the second integral on the right-hand side in \eqref{II-est},
we employ \eqref{mono-est-1} and Young's inequality to have
\begin{equation*}
\begin{aligned}
\int_{\Omega_{ij}^-}|Du_i-Du_j|^{r(x)}\ dx
&\le
c(\varepsilon)\int_{\Omega_{ij}^-} \gh{|Du_i|^2+|Du_j|^2}^{\frac{p(x)-2}2}|Du_i-Du_j|^2\ dx\\
&\qquad+
\varepsilon\int_{\Omega_{ij}^-} \gh{|Du_i|^2+|Du_j|^2}^{\frac{r(x)(2-p(x))}{2(2-r(x))}}\ dx\\
&\le
c(\varepsilon)kK
+
c\varepsilon\int_\Omega \gh{|Du_i|^{r(x)}+|Du_j|^{r(x)}} \ dx + \varepsilon|\Omega|
\end{aligned}
\end{equation*}
for any $\varepsilon>0.$
In the last inequality we used that $\frac{r(x)(2-p(x))}{2-r(x)}<r(x)$ for $x \in \Omega_{ij}^-$.
Then it follows from \eqref{Du-bound} that
\begin{equation}\label{II2}
\int_{\Omega_{ij}^-}|Du_i-Du_j|^{r(x)}\ dx
\le
c(\varepsilon)kK+c\varepsilon
\end{equation}
for some $c=c(\data,K,\|Dg\|_{L^{p(\cdot)}(\Omega)}, |\Omega|)>0$.
Inserting  \eqref{II-1} and \eqref{II2} into \eqref{II-est}, we arrive at
\begin{equation*}
II_{ij}^{(k)}
\le
c(\varepsilon)kK+c\varepsilon.
\end{equation*}
Consequently, we can select $\varepsilon$ and $k$ sufficiently small, independently from $i$ and $j$, so that
\begin{equation}\label{II-est-1}
\sup_{i,j}II_{ij}^{(k)}
\le
\tilde{\varepsilon}\quad\mbox{for any $\tilde{\varepsilon}>0$}.
\end{equation}
Combining \eqref{I-est-1} and \eqref{II-est-1} with \eqref{I+II}, we have
\begin{equation*}\label{Du_i-Du_j}
\lim_{i,j\rightarrow\infty}\int_\Omega |Du_i-Du_j|^{r(x)} \ dx=0,
\end{equation*}
which implies that
$\{Du_i\}_{i\in\N}$ is a Cauchy sequence in $L^{r(\cdot)}(\Omega)$.
In view of \eqref{T_k(u-g)}, we infer
$$
Du_i \rightarrow Du \quad\mbox{in $L^{r(\cdot)}(\Omega)$},
$$
which completes the proof.
\end{proof}

\begin{remark}
For the existence of a limit of approximating solutions, we do not need the assumptions $\diva(D\psi_1,\cdot), \diva(D\psi_2,\cdot), \diva(Dg,\cdot) \in L^1(\Omega) $ in \eqref{psi-g-L1} and \eqref{g-L1}.
Indeed, we can see that there is a limit of approximating solutions in $\mathcal{T}_{g}^{1,p(\cdot)}(\Omega)$ by taking $g=(\psi_1)_+-(\psi_2)_-$.
\end{remark}

\section{Comparison estimates}
\label{comparison}

We consider the weak solution $u\in\mathcal{A}_{\psi_1,\psi_2}$ to the variational inequality
\begin{equation}\label{sola wf}
\integral{\OO}{\mathbf{a}(Du,x)\cdot (D\varphi-Du)}{dx} \ge \integral{\OO}{\mu(\varphi-u)}{dx} \quad \text{for all} \ \ \varphi \in \mathcal{A}_{\psi_1,\psi_2}
\end{equation}
under the assumption
\begin{equation*}
\label{regular} \mu \in L^1(\Omega) \cap W^{-1,p'(\cdot)}(\Omega),
\end{equation*}
where the admissible set $\mathcal{A}_{\psi_1,\psi_2}$ is given by \eqref{admissible set}.

In this section, we establish comparison estimates in $L^1$-sense between the weak solution $u$ of \eqref{sola wf} and the weak solutions of some reference problems.
Throughout this section we assume that $\ga\le n$ and $(p(\cdot),\mathbf{a},\OO)$
is $(\delta,R)$-vanishing. Moreover, we give the two obstacles $\psi_1$ and $\psi_2$ with \eqref{psi-g-L1}.
We only focus on the comparisons near boundary regions of $\Omega$, since the interior case can be derived with the same spirit.
Let $0 < r \le \frac{R_0}{8}$, where $R_0\in(0,1)$ is determined later. We assume the following geometric setting:
\begin{equation}\label{reifen}
B_{8r}^+ \subset \OO_{8r}  \subset B_{8r} \cap \{x_n > -16\delta r\},
\end{equation}
and write
$$p_0 := p(0), \quad p_1:=\inf_{x \in \OO_{8r}} p(x), \quad p_2:=\sup_{x \in \OO_{8r}} p(x),$$
and
\begin{equation*}
\chi_{\{p_0 <2\}} := \left\{\begin{alignedat}{2}
0 & \quad \text{if} \ \ p_0 \ge 2,\\
1 & \quad \text{if} \ \ p_0 < 2.
\end{alignedat}\right.
\end{equation*}
In addition, we denote that for a measurable set $D \subset \bb^n$,
\begin{equation}\label{nu}
|\mu|(D): = \integral{D}{|\mu(x)|}{dx} \quad \text{and} \quad \kappa(D):=|\mu|(D)+|D \cap \OO|,
\end{equation}
and we set
\begin{equation}\label{M-def}
M:= \kappa(\OO)+\int_\Omega |\diva(D\psi_1,x)|\ dx+\int_\Omega |\diva(D\psi_2,x)|\ dx + \integral{\OO}{|Du|}{dx} +1.
\end{equation}

We now introduce the reference problems.
Defining the admissible set
$$
\mathcal{A}_{\psi_1}(\Omega_{8r}):=\mgh{\varphi\in u+W_0^{1,p(\cdot)}(\Omega_{8r}): \varphi\ge\psi_1\ \mbox{a.e. in $\Omega_{8r}$}},
$$
we consider the weak solution $z\in\mathcal{A}_{\psi_1}(\Omega_{8r})$ to the variational inequality
\begin{equation}\label{ref-z}
\int_{\Omega_{8r}}\mathbf{a}(Dz,x)\cdot (D\varphi-Dz)\ dx \ge
\int_{\Omega_{8r}}\mathbf{a}(D\psi_2,x)\cdot (D\varphi-Dz)\ dx
\end{equation}
for all $\varphi\in\mathcal{A}_{\psi_1}(\Omega_{8r})$, where $u$ is the weak solution of the variational inequality \eqref{sola wf}.
We next take into account the following equations, sequentially,
\begin{equation}\label{ref-h}\left\{
\begin{alignedat}{3}
\ddiv \mathbf{a}(Dh,x) &= \ddiv \mathbf{a}(D\psi_1,x) &&\quad \text{in}  \  \OO_{8r}, \\
h &= z &&\quad \text{on} \ \partial \OO_{8r}, \\
\end{alignedat}\right.
\end{equation}
where $z\in\mathcal{A}_{\psi_1}(\Omega_{8r})$ is the weak solution of the variational inequality \eqref{ref-z},
and
\begin{equation}\label{pem2}\left\{
\begin{alignedat}{3}
\ddiv \mathbf{a}(Dw,x) &= 0 &&\quad \text{in} \ \OO_{8r}, \\
w &= h &&\quad \text{on} \ \partial \OO_{8r}, \\
\end{alignedat}\right.
\end{equation}
where $h\in W^{1,p(\cdot)}(\Omega_{8r})$ is the weak solution of the equation \eqref{ref-h}.

We first derive the comparison estimates between \eqref{sola wf} and \eqref{ref-z} as follows:
\begin{lemma}\label{lem-u-z}
Assume that $0 < r \le \frac{R_0}{8}$,
where
$R_0 > 0$ satisfies
\begin{equation}\label{compa1_a}
R_0 \le \min\mgh{\frac{R}{2}, \frac{1}{M}}.
\end{equation}
Let $u\in\mathcal{A}_{\psi_1,\psi_2}$
and $z \in \mathcal{A}_{\psi_1}(\Omega_{8r})$ be the weak solutions to the variational inequalities \eqref{sola wf} and \eqref{ref-z}, respectively.
Then there exists a constant
$c=c(\data)>0$ such that
\begin{equation*}\label{comp-u-z}
\begin{aligned}
&\mint{\OO_{8r}}{|Du-Dz|}{dx} \\
&\quad\le
c \left[\frac{1}{r^{n-1}} \left( \kappa(\Omega_{8r})+\int_{\Omega_{8r}}|\diva(D\psi_2,x)|\ dx \right) \right]^{\frac{1}{p_0-1}} \\
&\qquad
+ \frac{c \chi_{\{p_0 <2\}}}{r^{n-1}} \left( \kappa(\Omega_{8r})+\int_{\Omega_{8r}}|\diva(D\psi_2,x)|\ dx \right)
\left(\mint{\OO_{8r}}{|Du|}{dx}\right)^{2-p_0}.
\end{aligned}
\end{equation*}
\end{lemma}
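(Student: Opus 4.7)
The plan is to reduce the two-obstacle comparison to a single-obstacle one by first proving that the reference solution $z$ automatically respects the upper obstacle $\psi_2$, and then running the truncated-test-function scheme of Scheven (cf.\ \cite{Sch12a}) between \eqref{sola wf} and \eqref{ref-z}. The flux $\ma(D\psi_2,\cdot)$ on the right-hand side of \eqref{ref-z} is exactly what enables this reduction: it plays the role of a built-in $\psi_2$-comparison for $z$.

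As a first step I would prove $z\le\psi_2$ a.e.\ in $\OO_{8r}$. The function $\varphi=\min\{z,\psi_2\}$ belongs to $\mathcal{A}_{\psi_1}(\OO_{8r})$ since $z,\psi_2\ge\psi_1$ and, on $\partial\OO_{8r}$, $z=u\le\psi_2$. Inserting $\varphi$ into \eqref{ref-z} and using $\varphi-z=-(z-\psi_2)_+$ yields
\[
\int_{\{z>\psi_2\}\cap\OO_{8r}}\bigl(\ma(Dz,x)-\ma(D\psi_2,x)\bigr)\cdot(Dz-D\psi_2)\,dx\le 0,
\]
and monotonicity \eqref{monotonicity} then forces $D(z-\psi_2)_+=0$ a.e.; since $(z-\psi_2)_+\in W_0^{1,p(\cdot)}(\OO_{8r})$, this gives $z\le\psi_2$. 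Next, I would extend $z$ by $u$ outside $\OO_{8r}$ to obtain $\tilde z$ and, for each $k>0$, test \eqref{sola wf} with $\varphi=u\mp T_k((u-\tilde z)_\pm)$ and \eqref{ref-z} with $\varphi=z\pm T_k((u-z)_\pm)$. The bound $z\le\psi_2$ just established is what makes the competitor $u+T_k((\tilde z-u)_+)$ admissible in $\mathcal{A}_{\psi_1,\psi_2}$; the other three admissibility checks are routine. Subtracting the paired inequalities, integrating by parts on the $\ma(D\psi_2,\cdot)$ term (legitimate since $T_k((u-z)_\pm)\in W_0^{1,p(\cdot)}(\OO_{8r})$ because $u=z$ on $\partial\OO_{8r}$), and bounding $|T_k|\le k$ produce the key energy inequality
\[
\int_{E_k}\bigl(\ma(Du,x)-\ma(Dz,x)\bigr)\cdot(Du-Dz)\,dx\le c\,k\,K_r,
\]
with $E_k:=\{|u-z|<k\}\cap\OO_{8r}$ and $K_r:=|\mu|(\OO_{8r})+\int_{\OO_{8r}}|\diva(D\psi_2,x)|\,dx$.

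The final step is a Boccardo--Gallou\"et interpolation converting this $E_k$-bound into an $L^1$-bound on $\OO_{8r}$, split by the sign of $p_0-2$. If $p_0\ge 2$, \eqref{monotonicity} upgrades the display to $\int_{E_k}|Du-Dz|^{p(x)}\,dx\le c\,k\,K_r$, and then \eqref{log-holder} together with $R_0\le 1/M$ from \eqref{compa1_a} replaces $p(x)$ by $p_0$ up to $\data$-dependent constants; H\"{o}lder's inequality on $E_k$ combined with a Marcinkiewicz-type tail estimate on $\OO_{8r}\setminus E_k$ (using $\int_{\OO_{8r}}|Du|\le M$ from \eqref{M-def} together with an analogous energy bound on $Dz$), followed by optimization in $k$, yields the first term of the claim. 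If $p_0<2$, \eqref{monotonicity} produces instead the weighted bound $\int_{E_k}(|Du|^2+|Dz|^2)^{(p(x)-2)/2}|Du-Dz|^2\,dx\le c\,k\,K_r$; two successive applications of H\"{o}lder (exploiting $2-p_0\in(0,1)$) turn this into an $L^1$-bound on $E_k$ carrying the extra factor $\bigl(\mint{\OO_{8r}}{|Du|}{dx}\bigr)^{2-p_0}$, and optimizing in $k$ delivers the term weighted by $\chi_{\{p_0<2\}}$.

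The main obstacle is the admissibility step described above: without the preliminary bound $z\le\psi_2$, the competitor $u+T_k((\tilde z-u)_+)$ would fail the upper-obstacle constraint and the subtraction yielding the $E_k$-energy inequality would break down. A secondary difficulty is that the variable exponent $p(\cdot)$ prevents a direct normalization; the smallness assumption \eqref{log-holder} together with the radius restriction $R_0\le 1/M$ in \eqref{compa1_a} are precisely what allow one to freeze $p(\cdot)$ at $p_0$ across $\OO_{8r}$ during the interpolation, at the cost of constants depending only on $\data$.
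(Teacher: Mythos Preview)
Your proposal is essentially correct and tracks the paper's proof closely: the paper also first establishes $z\le\psi_2$ via the competitor $\min\{z,\psi_2\}$, extends $z$ by $u$, and then tests \eqref{sola wf} and \eqref{ref-z} with truncations (they use $T_k(u-z)$ and additionally $\Phi_k(u-z)$ rather than splitting into $\pm$ parts) to obtain the level-set energy inequalities. The one organizational difference worth noting is that the paper does \emph{not} carry out the Boccardo--Gallou\"et interpolation directly at scale $r$ as you propose; instead it inserts an explicit scaling step. Namely, it first proves the dimensionless estimate $\fint_{\Omega_1}|Du-Dz|\,dx\le c(\data)$ under a normalized hypothesis of the form $P+\chi_{\{p_1<2\}}P\bigl(\int_{\Omega_1}|Du|\bigr)^{2-p_1}\le c$, deferring the actual interpolation to \cite[Lemma~3.1]{BOP17}; then it rescales by setting $\tilde u(y)=u(8ry)/(8Ar)$, $\tilde{\mathbf a}(\xi,y)=\mathbf a(A\xi,8ry)/A^{p_0-1}$, etc., with $A$ chosen equal to the right-hand side of the claimed estimate, so that the normalized hypothesis is automatically met. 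Your direct route---freezing $p(\cdot)$ at $p_0$ via log-H\"older and $R_0\le 1/M$ and optimizing in $k$ at the original scale---is viable and morally equivalent, but the scaling detour is cleaner for variable exponents because it isolates the exponent-freezing once in the verification that $\tilde{\mathbf a}$ satisfies \eqref{growth}, rather than threading it through every interpolation inequality.
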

\begin{proof}
{\em Step 1. Dimensionless estimates.}
Assume that $8r=1$, i.e., $\Omega_{8r}=\Omega_1$. We will prove
\begin{equation}\label{compa1-2}
\mint{\OO_1}{|Du-Dz|}{dx} \le c,
\end{equation}
under the assumption
\begin{equation}
\label{compa1-assump}
P+ \chi_{\{p_1<2\}}P\left(\integral{\OO_1}{|Du|}{dx}\right)^{2-p_1} \le c,
\end{equation}
where the both constants $c$ depend only on $\data$ and
$$
P:=|\mu|(\OO_1)+\int_{\Omega_1}|\diva(D\psi_2,x)|\ dx.
$$

First extend $z$ to $\Omega\setminus\Omega_1$ by $u$. We claim
$$u+T_k(z-u),  u+\Phi_k(z-u) \in\mathcal{A}_{\psi_1,\psi_2} \quad \text{for all} \ k>0,$$
where the truncations $T_k$ and $\Phi_k$ are given in \eqref{truncation} and \eqref{trun}, respectively.
To confirm this, it is enough to show that
$
z\le\psi_2 \ \mbox{a.e. in $\Omega_1$},
$
since
$$\psi_1 \le \min\{u,z\}\le u+T_k(z-u), u+\Phi_k(z-u) \le\max\{u,z\}.$$
Taking $\varphi=\min\{z,\psi_2\}=z-(z-\psi_2)_+\in\mathcal{A}_{\psi_1}(\Omega_1)$ in \eqref{ref-z}, we get
\begin{align*}
\int_{\Omega_1\cap\{z\ge\psi_2\}}(\mathbf{a}(Dz,x)-\mathbf{a}(D\psi_2,x))\cdot(Dz-D\psi_2)\ dx\le0.
\end{align*}
It follows from \eqref{monotonicity} that
\begin{equation}
\begin{aligned}\label{Dz-Dpsi}
&\int_{\Omega_1\cap\{p(x)\ge2\}} |D(z-\psi_2)_+|^{p(x)}\ dx \\
&\qquad +
\int_{\Omega_1\cap\{p(x)<2\}}\big(|Dz|^2+|D\psi_2|^2 \big)^{\frac{p(x)-2}2}|D(z-\psi_2)_+|^2\ dx\le0.
\end{aligned}
\end{equation}
Note that $z\in u+W_0^{1,p(\cdot)}(\Omega_1)$, $u\le\psi_2$ a.e. in $\Omega$,
and $u=0\le\psi_2$ a.e. on $\partial\Omega$.
This means $(z-\psi_2)_+\in W^{1,p(x)}_0(\Omega_1)$,
and hence \eqref{Dz-Dpsi} implies that $(z-\psi_2)_+=0$; namely, $z\le\psi_2$ a.e. in $\Omega_1$.

We now take $\varphi=u+T_k(z-u)$ in \eqref{sola wf}, which follows
\begin{align}\label{u-z}
\int_{\Omega_1}\mathbf{a}(Du,x)\cdot D[T_k(u-z)]\ dx\le
\int_{\Omega_1}\mu T_k(u-z)\ dx.
\end{align}
Since $z+T_k(u-z)\ge\min\{u,z\}\ge\psi_1$, we use $\varphi=z+T_k(u-z)\in\mathcal{A}_{\psi_1}(\Omega_1)$ in \eqref{ref-z}, to have
\begin{align}\label{z-u}
\int_{\Omega_1}\mathbf{a}(Dz,x)\cdot D[T_k(u-z)]\ dx\ge
-\int_{\Omega_1}\diva(D\psi_2,x) T_k(u-z)\ dx.
\end{align}
Subtracting \eqref{z-u} from \eqref{u-z}, we find
\begin{equation}\label{D_k}
\begin{aligned}
&\int_{D_k}(\mathbf{a}(Du,x)-\mathbf{a}(Dz,x))\cdot (Du-Dz)\ dx\\
&\qquad\qquad \le
k\gh{|\mu|(\Omega_1)+\int_{\Omega_1}|\diva(D\psi_2,x)|\ dx},
\end{aligned}
\end{equation}
where $D_k := \OO_1 \cap \{|u-z|\le k\}$.
By testing $\varphi=u+\Phi_k(z-u)$ for \eqref{sola wf} and $\varphi=z+\Phi_k(u-z)$ for \eqref{ref-z},
we similarly obtain
\begin{equation}\label{C_k}
\int_{C_k} \big(\ma(Du,x)-\ma(Dz,x)\big)\cdot(Du-Dz)\ dx \le
|\mu|(\Omega_1)+\int_{\Omega_1}|\diva(D\psi_2,x)|\ dx,
\end{equation}
where $C_k := \OO_1 \cap \{k<|u-z|\le k+1\}$.
Proceeding as in the proof of \cite[Lemma 3.1]{BOP17}, we obtain the estimate \eqref{compa1-2} under \eqref{compa1-assump}.

{\em Step 2. Scalings.}
Let us define
\begin{equation*}
\begin{aligned}\label{scale-maps}
&\tilde{p}(y):=p(8ry), \quad  \tilde{\psi}_1(y) := \frac{\psi_1(8ry)}{8Ar},
\quad \tilde{\psi}_2(y) := \frac{\psi_2(8ry)}{8Ar}, \\
& \tilde{\mu}(y) := \frac{8r\mu(8ry)}{A^{p_0-1}}, \quad \tilde{u}(y) := \frac{u(8ry)}{8Ar}, \quad
 \mbox{and} \quad
\tilde{\mathbf{a}}(\xi,y):=\frac{\mathbf{a}(A\xi,8ry)}{A^{p_0-1}}
\end{aligned}
\end{equation*}
for $y \in \tilde{\OO} := \{y \in \bb^n: 8ry \in \OO \}$ and $\xi \in \bb^n$, where
\begin{equation*}
\begin{aligned}
A:= &\left[\frac{1}{r^{n-1}}\left(\kappa(\Omega_{8r})+\int_{\Omega_{8r}}|\diva(D\psi_2,x)|\ dx\right)\right]^{\frac{1}{p_0-1}} \\
&\ +  \frac{\chi_{\{p_0 <2\}}}{r^{n-1}}\left(\kappa(\Omega_{8r})+\int_{\Omega_{8r}}|\diva(D\psi_2,x)|\ dx\right) \left(\mint{\OO_{8r}}{|Du|}{dx}\right)^{2-p_0}.
\end{aligned}
\end{equation*}
We then check that $\tilde{u}$ is the weak solution to the variational inequality
\begin{equation*}\label{compa1-22}
\int_{\tilde{\Omega}} \ \mathbf{\tilde{a}}(D\tilde{u},y)\cdot(D\varphi-D\tilde{u})\ dy
\ge
\int_{\tilde{\Omega}}\tilde{\mu}(\varphi-\tilde{u})\ dy \quad\mbox{for all $\varphi\in\mathcal{A}_{\tilde{\psi}_1,\tilde{\psi}_2}$},
\end{equation*}
where
$\mathcal{A}_{\tilde{\psi}_1,\tilde{\psi}_2}:=\mgh{\varphi\in W^{1,\tilde{p}(\cdot)}_0(\tilde{\Omega}): \tilde{\psi}_1\le\varphi\le\tilde{\psi}_2 \ \mbox{a.e. in} \ \tilde{\Omega} }$.
Moreover, defining
\begin{align*}\label{scale-maps-1}
\tilde{z}(y) := \frac{z(8ry)}{8Ar} \quad\mbox{for $y\in\tilde{\Omega}_1:=\tilde{\Omega}\cap B_1$},
\end{align*}
we know that $\tilde{z}$ is the weak solution to the variational inequality
\begin{equation*}\label{compa1-23}
\int_{\tilde{\Omega}_1} \ \mathbf{\tilde{a}}(D\tilde{z},y)\cdot(D\varphi-D\tilde{z})\ dy
\ge
\int_{\tilde{\Omega}_1}\mathbf{\tilde{a}}(D\tilde{\psi}_2,x)\cdot(D\varphi-D\tilde{z})\ dy
\end{equation*}
for all $\varphi\in\mathcal{A}_{\tilde{\psi}_1}(\tilde{\Omega}_1)$, where
$\mathcal{A}_{\tilde{\psi}_1}(\tilde{\Omega}_1):=\{\varphi\in \tilde{u}+W^{1,\tilde{p}(\cdot)}_0(\tilde{\Omega}_1): \varphi\ge\tilde{\psi}_1 \ \mbox{a.e. in} \ \tilde{\Omega}_1 \}$.
We can now proceed analogously to the proof of \cite[Lemma 3.1]{BOP17}, to obtain the desired estimate.
\end{proof}

\begin{remark}\label{rem-p-const-2}
If $p^- \ge 2$, then we can replace \eqref{D_k} by
\begin{equation*}
\int_{D_k}|Du-Dz|^{p(x)}\ dx
\le
ck|\mu|(\Omega_1)+c\int_{D_k}|D\psi_2|^{p(x)}\ dx,
\end{equation*}
and \eqref{C_k} by
\begin{equation*}
\int_{C_k}|Du-Dz|^{p(x)}\ dx
\le
c|\mu|(\Omega_1)+c\int_{C_k}|D\psi_2|^{p(x)}\ dx.
\end{equation*}
Proceeding as in the proof of Lemma~\ref{lem-u-z} with two estimates above, instead of \eqref{D_k} and \eqref{C_k}, we conclude
\begin{equation*}
\mint{\OO_{8r}}{|Du-Dz|}{dx} \le c \left[\frac{\kappa(\Omega_{8r})}{r^{n-1}}\right]^{\frac{1}{p_0-1}} +c\gh{\fint_{\Omega_{8r}}|D\psi_2|^{p(x)}\ dx}^{\frac{1}{p_0}}.
\end{equation*}
Thus for $p^- \ge 2$ we can drop the assumption $\diva(D\psi_2,\cdot)\in L^1(\Omega)$ in \eqref{psi-g-L1}.
\end{remark}

\begin{remark}\label{rem-p-const-1}
If $p(\cdot)$ is a constant, then we can deduce a version of Lemma~\ref{lem-u-z} without introducing the measure $\kappa$, see \cite{DM10, DM11, Sch12a}.
However, if $p(\cdot)$ is not a constant, then the presence of $\kappa$ plays a crucial role with log-H\"older continuity of $p(\cdot)$.
\end{remark}

Like similar arguments in the proof of Lemma~\ref{lem-u-z}, we can obtain the following comparison estimates alongside \eqref{ref-z}, \eqref{ref-h}, and \eqref{pem2}.
\begin{lemma}\label{lem-z-h}
Let $z \in \mathcal{A}_{\psi_1}(\Omega_{8r})$ be the weak solution to the variational inequality \eqref{ref-z}, and let $h \in W^{1,p(\cdot)}(\Omega_{8r})$ be the weak solution of the problem \eqref{ref-h}.
Under the assumptions of Lemma~\ref{lem-u-z},
there exists a constant
$c=c(\data)>0$ such that
\begin{align*}
&\mint{\OO_{8r}}{|Dz-Dh|}{dx} \\
&\le
c \left[\frac{1}{r^{n-1}} \left(|\OO_{8r}|+\int_{\Omega_{8r}}|\diva(D\psi_1,x)|\ dx+\int_{\Omega_{8r}}|\diva(D\psi_2,x)|\ dx \right) \right]^{\frac{1}{p_0-1}} \\
&\quad
+  \frac{c \chi_{\{p_0 <2\}}}{r^{n-1}} \left(|\OO_{8r}|+\int_{\Omega_{8r}}|\diva(D\psi_1,x)|\ dx+\int_{\Omega_{8r}}|\diva(D\psi_2,x)|\ dx \right)\\
&\quad\qquad
\times \left(\mint{\OO_{8r}}{|Dz|}{dx}\right)^{2-p_0}.
\end{align*}
\end{lemma}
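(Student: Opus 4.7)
The strategy is to follow the proof of Lemma~\ref{lem-u-z} line by line, with $z$ playing the role of $u$, $h$ playing the role of $z$, and $\mathbf{a}(D\psi_1,\cdot)$ taking over the role played by $\mathbf{a}(D\psi_2,\cdot)$ in the right-hand side of the reference problem. Since no measure $\mu$ appears in \eqref{ref-z} or \eqref{ref-h}, the analogue of $|\mu|(\Omega_{8r})$ is simply absent, and the $|\Omega_{8r}|$ term in the statement is exactly what is needed for the scaling step to absorb the log-H\"older oscillation of $p(\cdot)$ (cf.\ Remark~\ref{rem-p-const-1}). The argument again splits into a dimensionless estimate and a scaling reduction.

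The one preparatory fact that does not appear in Lemma~\ref{lem-u-z} is the admissibility of the natural test function $z+T_k(h-z)\in\mathcal{A}_{\psi_1}(\Omega_{8r})$ in \eqref{ref-z}. The containment $h-z\in W^{1,p(\cdot)}_0(\Omega_{8r})$ is automatic from $h=z$ on $\partial\Omega_{8r}$; the obstacle constraint reduces to showing $h\ge\psi_1$ a.e.\ in $\Omega_{8r}$. I would prove this by testing the weak form of \eqref{ref-h} with $(\psi_1-h)_+$, which belongs to $W^{1,p(\cdot)}_0(\Omega_{8r})$ because $h|_{\partial\Omega_{8r}}=z|_{\partial\Omega_{8r}}$ and $z\ge\psi_1$ a.e.\ in $\Omega_{8r}$. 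The resulting identity
\[
\int_{\{h<\psi_1\}}\bigl(\mathbf{a}(Dh,x)-\mathbf{a}(D\psi_1,x)\bigr)\cdot(Dh-D\psi_1)\,dx=0,
\]
combined with \eqref{monotonicity}, forces $D(\psi_1-h)_+\equiv 0$ on $\Omega_{8r}$ and hence $h\ge\psi_1$ a.e. The same admissibility check, together with a short case analysis valid for $k\ge 1$, then shows $z+\Phi_k(h-z)\in\mathcal{A}_{\psi_1}(\Omega_{8r})$.

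Granted admissibility, inserting $\varphi=z+T_k(h-z)$ into \eqref{ref-z} and subtracting the weak form of \eqref{ref-h} tested with $T_k(h-z)\in W^{1,p(\cdot)}_0(\Omega_{8r})$ yields, on $D_k:=\Omega_{8r}\cap\{|h-z|\le k\}$,
\[
\int_{D_k}\bigl(\mathbf{a}(Dh,x)-\mathbf{a}(Dz,x)\bigr)\cdot(Dh-Dz)\,dx\le\int_{\Omega_{8r}}\bigl(\mathbf{a}(D\psi_1,x)-\mathbf{a}(D\psi_2,x)\bigr)\cdot DT_k(h-z)\,dx.
\]
Since $\diva(D\psi_1,\cdot),\diva(D\psi_2,\cdot)\in L^1(\Omega)$ by \eqref{psi-g-L1}, integration by parts on the right-hand side bounds it by $k\bigl(\int_{\Omega_{8r}}|\diva(D\psi_1,x)|\,dx+\int_{\Omega_{8r}}|\diva(D\psi_2,x)|\,dx\bigr)$, which plays the role of \eqref{D_k}. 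The same manipulation with $\Phi_k$ in place of $T_k$ produces the analogue of \eqref{C_k} on $C_k:=\Omega_{8r}\cap\{k<|h-z|\le k+1\}$, the $k$-factor on the right-hand side being replaced by~$1$.

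From this point the proof reproduces Lemma~\ref{lem-u-z} with purely notational changes. Step~1 (the dimensionless estimate $\fint_{\Omega_1}|Dz-Dh|\,dx\le c$ under a normalized counterpart of \eqref{compa1-assump}) follows from the two inequalities above by the manipulation of \cite[Lemma~3.1]{BOP17}, and Step~2 (scaling) reduces the general $r$-case to Step~1 after rescaling by the constant
\[
A:=\Bigl[\tfrac{1}{r^{n-1}}\bigl(|\Omega_{8r}|+\textstyle\int_{\Omega_{8r}}|\diva(D\psi_1,x)|\,dx+\int_{\Omega_{8r}}|\diva(D\psi_2,x)|\,dx\bigr)\Bigr]^{\frac{1}{p_0-1}}+\cdots,
\]
with the $\chi_{\{p_0<2\}}$-correction reflecting the singular $p<2$ regime exactly as in Lemma~\ref{lem-u-z}. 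The main obstacle I anticipate is the admissibility step, specifically the verification $h\ge\psi_1$; once that is settled, the remainder is a careful transcription of the previous lemma.
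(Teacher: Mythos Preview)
Your proposal is correct and follows essentially the same approach as the paper, which simply states that Lemma~\ref{lem-z-h} is obtained ``like similar arguments in the proof of Lemma~\ref{lem-u-z}''. You have correctly identified the one genuinely new ingredient---the verification that $h\ge\psi_1$ a.e.\ in $\Omega_{8r}$, obtained by testing \eqref{ref-h} with $(\psi_1-h)_+$---which plays the role analogous to the check $z\le\psi_2$ in Lemma~\ref{lem-u-z}; once this is in place, the admissibility of $z+T_k(h-z)$ and $z+\Phi_k(h-z)$ in $\mathcal{A}_{\psi_1}(\Omega_{8r})$ follows from $\min\{z,h\}\ge\psi_1$ (in fact for all $k>0$, not just $k\ge1$), and the rest is a direct transcription of the dimensionless estimate and scaling from Lemma~\ref{lem-u-z}.
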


\begin{lemma}\label{lem-h-w}
Let $h \in W^{1,p(\cdot)}(\Omega_{8r})$ and $w \in W^{1,p(\cdot)}(\Omega_{8r})$ be the weak solutions of the problems \eqref{ref-h} and \eqref{pem2}, respectively.
Under the assumptions of Lemma~\ref{lem-u-z},
there exists a constant
$c=c(\data)>0$ such that
\begin{align*}
&\mint{\OO_{8r}}{|Dh-Dw|}{dx} \\
&\quad\le
c \left[\frac{1}{r^{n-1}} \gh{|\OO_{8r}|+\int_{\Omega_{8r}}|\diva(D\psi_1,x)|\ dx} \right]^{\frac{1}{p_0-1}} \\
&\quad\qquad
+  \frac{c \chi_{\{p_0 <2\}}}{r^{n-1}} \gh{|\OO_{8r}|+\int_{\Omega_{8r}}|\diva(D\psi_1,x)|\ dx} \left(\mint{\OO_{8r}}{|Dh|}{dx}\right)^{2-p_0}.
\end{align*}
\end{lemma}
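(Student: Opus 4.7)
The plan is to mirror, in a simpler setting, the argument of Lemma~\ref{lem-u-z}. The essential simplifications are that both $h$ and $w$ satisfy equations rather than variational inequalities, they share the same boundary values so that $h - w \in W_0^{1,p(\cdot)}(\Omega_{8r})$, and no obstacle constraint restricts the choice of test functions. The weak formulations of \eqref{ref-h} and \eqref{pem2} read
\[
\int_{\Omega_{8r}} \mathbf{a}(Dh,x)\cdot D\varphi\, dx = \int_{\Omega_{8r}} \mathbf{a}(D\psi_1,x)\cdot D\varphi\, dx, \qquad \int_{\Omega_{8r}} \mathbf{a}(Dw,x)\cdot D\varphi\, dx = 0
\]
for all $\varphi \in W_0^{1,p(\cdot)}(\Omega_{8r})$. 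Subtracting these and invoking the integration-by-parts identity $\int \mathbf{a}(D\psi_1,x)\cdot D\varphi\, dx = -\int \diva(D\psi_1,x)\varphi\, dx$, valid for bounded $\varphi \in W_0^{1,p(\cdot)}(\Omega_{8r})$ under \eqref{psi-g-L1}, I would test sequentially with $\varphi = T_k(h-w)$ and $\varphi = \Phi_k(h-w)$ to produce the two basic bounds
\[
\int_{D_k} \big(\mathbf{a}(Dh,x) - \mathbf{a}(Dw,x)\big)\cdot (Dh-Dw)\, dx \le k\int_{\Omega_{8r}} |\diva(D\psi_1,x)|\, dx,
\]
\[
\int_{C_k} \big(\mathbf{a}(Dh,x) - \mathbf{a}(Dw,x)\big)\cdot (Dh-Dw)\, dx \le \int_{\Omega_{8r}} |\diva(D\psi_1,x)|\, dx,
\]
where $D_k := \Omega_{8r} \cap \{|h-w| \le k\}$ and $C_k := \Omega_{8r} \cap \{k<|h-w| \le k+1\}$.

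These are the exact counterparts of \eqref{D_k} and \eqref{C_k} in Lemma~\ref{lem-u-z}, now with the roles of both $|\mu|(\Omega_1)$ and $\int|\diva(D\psi_2,x)|\, dx$ played jointly by $\int|\diva(D\psi_1,x)|\, dx$; the volume factor $|\Omega_{8r}|$ that appears in the final statement will enter naturally through the Young/volume estimates that follow. Splitting $D_k$ and $C_k$ along $\{p(x) \ge 2\}$ and $\{p(x) < 2\}$ and applying \eqref{monotonicity}, one bounds the superquadratic contribution directly by $\int |Dh-Dw|^{p(x)}\, dx$, while on the subquadratic part the degenerate quantity $(|Dh|^2 + |Dw|^2)^{(p(x)-2)/2}|Dh-Dw|^2$ is converted via Young's inequality into an integrand of the form $|Dh-Dw|^{p(x)}$ at the cost of the factor $\gh{\fint |Dh|}^{2-p_0}$ and the indicator $\chi_{\{p_0<2\}}$ that appear in the final estimate.

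At this point the proof splits into the same two steps as Lemma~\ref{lem-u-z}: a dimensionless reduction in which one normalizes $8r=1$ via the choice
\[
A := \left[\frac{1}{r^{n-1}}\gh{|\Omega_{8r}| + \int_{\Omega_{8r}} |\diva(D\psi_1,x)|\, dx}\right]^{\frac{1}{p_0 - 1}} + \frac{\chi_{\{p_0<2\}}}{r^{n-1}}\gh{|\Omega_{8r}| + \int_{\Omega_{8r}} |\diva(D\psi_1,x)|\, dx}\gh{\fint_{\Omega_{8r}} |Dh|\, dx}^{2-p_0},
\]
together with the standard rescalings $\tilde h(y) := h(8ry)/(8Ar)$, $\tilde w(y) := w(8ry)/(8Ar)$ and $\tilde{\mathbf{a}}(\xi,y) := \mathbf{a}(A\xi,8ry)/A^{p_0-1}$, followed by the log-H\"older closure argument of \cite[Lemma~3.1]{BOP17} to absorb the variation $p(x) - p_0$. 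The hard part is again the subquadratic case: the interplay among the degenerate monotonicity, the Young-type loss, and the log-H\"older deviation of $p(x)$ from $p_0$ must be balanced carefully, and this is where the smallness condition $R_0 \le 1/M$ from \eqref{compa1_a} is used. Since all of these steps are already executed in the proof of Lemma~\ref{lem-u-z}, only cosmetic changes are required here.
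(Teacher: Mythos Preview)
Your proposal is correct and follows essentially the same approach as the paper: the paper gives no separate proof of Lemma~\ref{lem-h-w} but simply remarks that it follows by the same arguments as Lemma~\ref{lem-u-z}, and your outline makes explicit exactly this adaptation (testing the difference of the two equations with $T_k(h-w)$ and $\Phi_k(h-w)$, then applying the monotonicity and scaling argument of \cite[Lemma~3.1]{BOP17}).
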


Next, we provide a higher integrability result of the equation \eqref{pem2}.

\begin{lemma}[{\cite[Lemma 3.3]{BOP17}}]\label{hi1}
Let $M_1>1$ and let $0<r\le \frac{R_0}{8}$. Suppose that $R_0>0$ satisfies
\begin{equation*}\label{hi1_a}
R_0 \le \min\mgh{\frac{R}{2}, \frac{1}{4}, \frac{1}{2M_1}} \quad \text{and} \quad \omega(2R_0) \le \frac{1}{2n}.
\end{equation*}
Then there exists a constant $\tau_0=\tau_0(\data)>0$ such that if $w$ is the weak solution of \eqref{pem2} with
\begin{equation*}\label{hi1_a1}
\integral{\OO_{8r}}{|Dw|}{dx} + 1 \le M_1,
\end{equation*}
then for any $0<\beta\le1$ there exists a constant $c=c(\data,\beta)>0$ so that
\begin{equation*}\label{hi1_r}
\gh{\mint{\OO_{\rho}(x_o)}{|Dw|^{p(x)(1+\sigma)}}{dx}}^{\frac{1}{1+\sigma}} \le c \gh{\mint{\OO_{2\rho}(x_o)}{|Dw|^{p(x)\beta}}{dx}}^{\frac{1}{\beta}} + c,
\end{equation*}
provided $0 < \sigma \le \tau_0$ and $\OO_{2\rho}(x_o) \subset \OO_{8r}$ with $\rho \le 4r$.
\end{lemma}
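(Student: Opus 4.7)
The plan is to follow the classical Gehring self-improvement scheme adapted to the variable exponent setting: first a Caccioppoli estimate, then a variable-exponent Sobolev--Poincar\'e inequality to obtain a reverse H\"older-type inequality on concentric subballs, and finally Gehring's lemma. Throughout, the log-H\"older hypothesis \eqref{log-holder} together with the a priori bound $\int_{\OO_{8r}} |Dw|\,dx + 1 \le M_1$ (and the smallness $R_0 \le \tfrac{1}{2M_1}$, $\omega(2R_0) \le \tfrac{1}{2n}$) are what keep all multiplicative constants universal. For the Caccioppoli step, on concentric balls with $\rho/2 \le \sigma < \tau \le \rho$ and $\OO_\rho(x_0) \subset \OO_{8r}$, I would test the weak form of \eqref{pem2} with $\varphi = \eta^{p_2}(w-k)$, where $\eta \in C_c^\infty(B_\tau(x_0))$ is a standard cut-off equal to $1$ on $B_\sigma(x_0)$, and choose $k = 0$ in the ``boundary case'' $B_\tau(x_0) \cap \partial\OO \neq \emptyset$ (in which case $w$ has zero trace on $\partial\OO \cap B_{8r}$, traced back through \eqref{ref-z}, \eqref{ref-h}, and \eqref{pem2} using $u \in \wwz(\OO)$) and $k = \overline{(w)}_{\OO_\tau(x_0)}$ otherwise. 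The growth conditions \eqref{growth} and Young's inequality then yield the standard variable-exponent Caccioppoli inequality.

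The second step freezes the exponent. On a ball of radius $\tau \le R_0$, the log-H\"older bound \eqref{log-holder} gives $\tau^{-n(p_2 - p_1)} \le \tau^{-n\omega(2\tau)} \le e^{n\delta}$, which allows one to compare $\fint |f|^{p(x)}\,dx$ with $\left(\fint |f|^{p_1}\,dx\right)^{p(x)/p_1}$ up to bounded multiplicative factors whenever $|f|$ is a priori controlled; the bound $\int_{\OO_{8r}}|Dw|\,dx \le M_1$ is what allows this comparison in the large-gradient regime. Combined with the usual constant-exponent Sobolev--Poincar\'e inequality at the frozen exponent on Reifenberg flat domains---applied after extending $w$ by zero in the boundary case, justified by the Reifenberg flatness of $\partial\OO$ and the zero trace noted above---this produces
\begin{equation*}
\fint_{\OO_\tau(x_0)} \left|\frac{w-k}{\tau}\right|^{p(x)} dx \le c \left(\fint_{\OO_\tau(x_0)} |Dw|^{p(x)\beta_0}\,dx\right)^{1/\beta_0} + c
\end{equation*}
for some $\beta_0 = \beta_0(n,\ga,\gb) \in (0,1)$. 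Combining this with the Caccioppoli estimate from Step 1 via a standard hole-filling argument yields the desired reverse H\"older inequality on concentric subballs.

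Finally, Gehring's lemma applied to this reverse H\"older inequality produces a constant $\tau_0 = \tau_0(\data) > 0$ such that $|Dw|^{p(\cdot)(1+\sigma)} \in L^1_{\mathrm{loc}}$ for $0 < \sigma \le \tau_0$, giving the stated estimate with $\beta = \beta_0$. A standard H\"older interpolation between the resulting $L^{p(\cdot)(1+\sigma)}$ bound and a crude lower bound upgrades the right-hand side exponent from $\beta_0$ to an arbitrary $\beta \in (0,1]$. I expect the main obstacle to lie in Step 2: passing between $|Dw|^{p(x)}$ and $|Dw|^{p(y)}$ at nearby points $x,y$ is only possible because the logarithmic decay of $\omega$ absorbs the factor $\rho^{n(p(y)-p(x))}$, while the a priori $L^1$ bound $M_1$ tames the large-gradient regime. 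These two ingredients dictate the quantitative smallness of $R_0$ and $\omega(2R_0)$ in the statement and are the only place where the hypotheses on $M_1$ and $R_0$ enter in a non-cosmetic way.
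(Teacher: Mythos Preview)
The paper does not supply a proof of this lemma; it is quoted verbatim from \cite[Lemma~3.3]{BOP17}. Your outline---Caccioppoli inequality for \eqref{pem2}, a variable-exponent Sobolev--Poincar\'e step with exponent freezing via the log-H\"older bound and the a~priori $L^1$ control $\int_{\OO_{8r}}|Dw|\,dx\le M_1$, and then Gehring's lemma---is exactly the standard route used in \cite{BOP17} (and, ultimately, in \cite{AM05} and related works), so your proposal is correct and aligned with the cited source. One small notational caution: in your freezing step you write $\tau^{-n(p_2-p_1)}\le\tau^{-n\omega(2\tau)}$, but in the paper $p_1,p_2$ denote the inf/sup of $p(\cdot)$ over the fixed region $\OO_{8r}$, not over the small ball $B_\tau$; the Gehring argument needs the \emph{local} oscillation on $B_\tau$, so you should use $\inf_{B_\tau}p$ and $\sup_{B_\tau}p$ there to make the inequality valid.
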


Now, we derive the universal constant $M_1$ as in Lemma~\ref{hi1}.
Assume that $R_0 > 0$ satisfies \eqref{compa1_a} and put
$$
Q:=\kappa(\Omega)+\int_{\Omega}|\diva(D\psi_1,x)|\ dx+\int_{\Omega}|\diva(D\psi_2,x)|\ dx+1.
$$
From Lemma \ref{lem-u-z}, \ref{lem-z-h} and \ref{lem-h-w}, we directly compute
\begin{equation}\label{M-w-1}
    \begin{aligned}
        \integral{\OO_{8r}}{|Dw|}{dx} +1
         &\le
         c \gh{ \integral{\OO}{|Du|}{dx} + diam(\Omega)^{\frac{n(\ga-2)+1}{\ga-1}} Q^{\frac{1}{\ga-1}}+1}\\
         &=: M_1
      \end{aligned}
\end{equation}
for some $c=c(\data)>0$, where $diam(\OO)$ is the diameter of $\OO$.

Adopting this $M_1$ and applying Lemma~\ref{hi1}, we obtain the higher integrability result for the problem \eqref{pem2} as follows:
\begin{lemma}[{\cite[Lemma 3.5]{BOP17}}]\label{hi3}
Let $0<r\le\frac{R_0}{8}$.
Suppose that $R_0 > 0$ satisfies
\begin{equation*}
R_0 \le \min\mgh{\frac{R}{2},\frac1{M}, \frac{1}{4}, \frac{1}{2M_1}} \quad \text{and} \quad \omega(2R_0) \le \frac{1}{2n},
\end{equation*}
with $M$ as in \eqref{M-def} and $M_1$ as in \eqref{M-w-1}.
Let $w$ be the weak solution of \eqref{pem2}. Then $w$ belongs to $W^{1,p_2}(\OO_{3r})$ with the estimate
\begin{equation*}\label{hi3_r1}
\mint{\OO_{3r}}{|Dw|^{p_2}}{dx} \le c \gh{\mint{\OO_{8r}}{|Dw|}{dx}}^{p_2} + c
\end{equation*}
for some $c=c(\data)>0$.
\end{lemma}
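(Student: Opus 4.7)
The plan is to combine the self-improving reverse Hölder inequality of Lemma~\ref{hi1} (which sits at the variable exponent level $p(x)(1+\sigma)$) with the log-Hölder continuity of $p(\cdot)$ to bridge up to the constant exponent $p_2$. The smallness conditions on $R_0$ are designed precisely so that Lemma~\ref{hi1} applies and so that all implicit constants remain universal under the log-Hölder bridging.

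First, I verify the hypotheses of Lemma~\ref{hi1}: the conditions on $R_0$ imposed here imply those of Lemma~\ref{hi1}, and the computation \eqref{M-w-1} already supplies the a priori bound $\integral{\OO_{8r}}{|Dw|}{dx} + 1 \le M_1$. Next, taking $x_o = 0$, $\rho = 3r \le 4r$, and $\beta = 1/p_2 \in (0,1]$ in Lemma~\ref{hi1}, I obtain
\begin{equation*}
\gh{\mint{\OO_{3r}}{|Dw|^{p(x)(1+\sigma)}}{dx}}^{\frac{1}{1+\sigma}} \le c\gh{\mint{\OO_{6r}}{|Dw|^{p(x)/p_2}}{dx}}^{p_2} + c
\end{equation*}
for some fixed $\sigma \in (0,\tau_0]$ depending only on $\data$.

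The right-hand side is routine: since $p(x)/p_2 \le 1$, Jensen's inequality gives $\mint{\OO_{6r}}{|Dw|^{p(x)/p_2}}{dx} \le \mint{\OO_{6r}}{|Dw|}{dx} + 1$, which after enlarging the ball to $\OO_{8r}$ is controlled by $\mint{\OO_{8r}}{|Dw|}{dx} + c$. For the left-hand side, the task is to replace $|Dw|^{p(x)(1+\sigma)}$ by $|Dw|^{p_2}$. Writing $|Dw|^{p_2} = |Dw|^{p(x)(1+\sigma)}\,|Dw|^{p_2 - p(x)(1+\sigma)}$ and splitting $\OO_{3r}$ into $\{|Dw| \le T\}$ and $\{|Dw| > T\}$ for a threshold $T$ polynomial in $1/r$, on the small-value region the factor $T^{\omega(16r) + p_2 \sigma}$ appears; the log-Hölder bound $T^{\omega(16r)} \le c$ (which follows from $\omega(r)\log(1/r) \le \delta$ whenever $T \le r^{-a}$ for some universal $a$) renders this uniformly bounded. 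On the large-value region, absorption of $|Dw|^{-p(x)\sigma + \omega(16r)}$ into $|Dw|^{p(x)(1+\sigma)}$ and another application of Lemma~\ref{hi1} closes the estimate.

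The main obstacle is the precise balancing in the final bridging step: the conditions $\omega(2R_0) \le \frac{1}{2n}$ and $R_0 \le \frac{1}{2M_1}$ are tuned exactly so that the factors $T^{\omega(16r)}$ and $r^{-n\omega(16r)}$ arising from log-Hölder continuity remain bounded by absolute constants. Since the present lemma is essentially \cite[Lemma 3.5]{BOP17}, whose proof depends only on the self-improving reverse Hölder estimate of Lemma~\ref{hi1} and on the $L^1$-bound \eqref{M-w-1} — both of which are supplied by the obstacle-problem machinery developed above — the same argument transfers verbatim to the present setting.
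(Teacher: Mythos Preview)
The paper gives no proof of this lemma and simply cites \cite[Lemma~3.5]{BOP17}; your proposal likewise defers to that reference after sketching the standard argument (apply Lemma~\ref{hi1} to obtain the reverse H\"older inequality at level $p(x)(1+\sigma)$, then use the log-H\"older continuity of $p(\cdot)$ together with the $L^1$-bound \eqref{M-w-1} and the smallness of $R_0$ to bridge from $p(x)(1+\sigma)$ up to the constant exponent $p_2$). Your identification of the ingredients and of why they are available in the present obstacle-problem setting is correct and matches the paper's treatment; the only imprecision is in the decomposition $|Dw|^{p_2} = |Dw|^{p(x)(1+\sigma)}|Dw|^{p_2-p(x)(1+\sigma)}$, which is ill-behaved near $|Dw|=0$ when the second exponent is negative---the usual fix is to work instead with $|Dw|^{p_2}\le |Dw|^{p(x)}|Dw|^{p_2-p(x)}$ on $\{|Dw|>1\}$ and use H\"older against the higher-integrability bound.
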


Next, defining a new vector field $\mathbf{B}=\mathbf{B}(\xi,x): \bb^n \times \OO_{8r} \to \bb^n$ by
\begin{equation*}
\mathbf{B}(\xi,x) = \mathbf{a}(\xi,x)|\xi|^{p_2-p(x)},
\end{equation*}
we directly check the following growth and ellipticity conditions:
\begin{equation}\label{growth2}
\left\{\begin{aligned}
&|\xi||D_{\xi}\mathbf{B}(\xi,x)|+|\mathbf{B}(\xi,x)| \le 3 \Lambda_1 |\xi|^{p_2-1},\\
&\frac{\Lambda_2}2 |\xi|^{p_2-2}|\eta|^2 \le  D_{\xi}\mathbf{B}(\xi,x)\eta\cdot\eta
\end{aligned}\right.
\end{equation}
for all $\eta \in \bb^n$, $\xi \in \bb^n \setminus \{0\}$ and $x \in \OO_{8r}$, and for $\Lambda_1$ and $\Lambda_2$ as in \eqref{growth}, whenever
\begin{equation*}
p_2 - p_1 \le \omega(16r) \le \omega(2R_0) \le \min\mgh{1, \frac{\Lambda_2}{2\Lambda_1}},
\end{equation*}
see \cite[Section 4]{BOR16} for details.
We next consider the integral average of $\mathbf{B}(\xi,\cdot)$ on $B_{8r}^+$, denoted by $\bar{\mathbf{B}} = \bar{\mathbf{B}}(\xi)$, as
\begin{equation*}
\bar{\mathbf{B}}(\xi) := \mint{B_{8r}^+}{\mathbf{B}(\xi,x)}{dx}.
\end{equation*}
Then \eqref{growth2} holds with respect to $\bar{\mathbf{B}}(\xi)$. Furthermore we have
\begin{equation*}
\sup_{0<r \le R} \mint{B_r^+}{\theta(\mathbf{a},B_r^+)(x)}{dx} \le 4 \delta,
\end{equation*}
by observing \eqref{small BMO} and
\begin{equation*}
\sup_{\xi \in \bb^n \setminus \{0\}} \frac{\left| \mathbf{B}(\xi,\cdot) - \bar{\mathbf{B}}(\xi) \right|}{|\xi|^{p_2-1}} = \theta(\mathbf{a},B_{8r}^+)(x).
\end{equation*}

We now consider the homogeneous frozen problem
\begin{equation}\label{pem3}\left\{
    \begin{alignedat}{3}
        \ddiv \bar{\mathbf{B}}(Dv) &= 0 &&\quad \text{in} \ \OO_{3r}, \\
        v &= w &&\quad \text{on} \ \partial \OO_{3r}, \\
    \end{alignedat}\right.
\end{equation}
where $w$ is the weak solution of \eqref{pem2}.
Indeed, $w\in W^{1,p_2}(\OO_{3r})$ from Lemma \ref{hi3}, and from the standard energy estimate we directly obtain $v \in  W^{1,p_2}(\OO_{3r})$.

We need to study the comparison estimate between \eqref{pem2} and \eqref{pem3}.
\begin{lemma}[{\cite[Lemma 3.6]{BOP17}}]\label{fcompa1}
Let $0<r\le\frac{R_0}{8}$.
Suppose that $R_0 > 0$ satisfies
\begin{equation*}
R_0 \le \min\mgh{\frac{R}{2},\frac1{M}, \frac{1}{4}, \frac{1}{2M_1}}
\end{equation*}
and
\begin{equation*}
p_2 - p_1 \le \omega(16r) \le \omega(2R_0) \le \min\mgh{\frac1{2n}, \frac{\Lambda_2}{2\Lambda_1}, \frac{\tau_0}{4}},
\end{equation*}
with $M$ as in \eqref{M-def}, $M_1$ as in \eqref{M-w-1} and $\tau_0$ as in Lemma~\ref{hi1}.
Let $w$ be the weak solution of \eqref{pem2}, and let $v$ be the weak solution of \eqref{pem3}.
Then we have the estimate
\begin{equation*}\label{fcompa1_r}
\mint{\OO_{3r}}{|Dw-Dv|^{p_2}}{dx} \le c \delta^{\frac{\tau_0}{4+\tau_0}} \mgh{\gh{\mint{\OO_{8r}}{|Dw|}{dx}}^{p_2} + 1}
\end{equation*}
for some $c=c(\data)>0$.
\end{lemma}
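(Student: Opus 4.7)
The plan is to test the weak formulations of \eqref{pem2} and \eqref{pem3} against $\varphi=w-v\in W^{1,p_2}_0(\OO_{3r})$ and subtract. Since $\ddiv \mathbf{a}(Dw,x)=0$ and $\ddiv \bar{\mathbf{B}}(Dv)=0$, this yields the identity
$$
\int_{\OO_{3r}} \bigl[\bar{\mathbf{B}}(Dw)-\bar{\mathbf{B}}(Dv)\bigr]\cdot D(w-v)\,dx
=
\int_{\OO_{3r}} \bigl[\bar{\mathbf{B}}(Dw)-\mathbf{a}(Dw,x)\bigr]\cdot D(w-v)\,dx.
$$
The left-hand side is bounded below by $c\int_{\OO_{3r}}|Dw-Dv|^{p_2}\,dx$ via the standard $p_2$-monotonicity for $\bar{\mathbf{B}}$ provided by \eqref{growth2}; this is where the frozen character of the reference problem is exploited.

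For the right-hand side I would decompose
$$
\bar{\mathbf{B}}(Dw)-\mathbf{a}(Dw,x)
=\bigl[\bar{\mathbf{B}}(Dw)-\mathbf{B}(Dw,x)\bigr]+\bigl[\mathbf{B}(Dw,x)-\mathbf{a}(Dw,x)\bigr].
$$
The first bracket is pointwise bounded by $\theta(\mathbf{a},B_{8r}^+)(x)|Dw|^{p_2-1}$ by the very definition of $\mathbf{B}$ and of $\theta$, so \eqref{small BMO} supplies the factor $\delta$. The second bracket satisfies
$$
\bigl|\mathbf{B}(Dw,x)-\mathbf{a}(Dw,x)\bigr|
=|\mathbf{a}(Dw,x)|\cdot\bigl||Dw|^{p_2-p(x)}-1\bigr|
\le \Lambda_1|Dw|^{p(x)-1}\bigl||Dw|^{p_2-p(x)}-1\bigr|,
$$
and the oscillation $p_2-p(x)\le\omega(16r)$ together with the log-H\"older condition \eqref{log-holder} forces $|Dw|^{p_2-p(x)}-1$ to be controlled once $|Dw|$ is bounded by a negative power of $r$, which in turn is guaranteed by the a priori control in $M_1$ via Lemma~\ref{hi3}.

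Combining these bounds with Young's inequality (with $p_2,p_2'$) lets one absorb $|D(w-v)|^{p_2}$ into the left-hand side and leaves a remainder of the form $\int_{\OO_{3r}}\theta(\mathbf{a},B_{8r}^+)^{p_2'}|Dw|^{p_2}\,dx+\text{(log-H\"older error)}$. Applying H\"older with a small exponent $\sigma\in(0,\tau_0]$, the $\theta$-factor contributes $\delta^{\sigma/(p_2'+\sigma)}$ while the surplus integrability $\int|Dw|^{p_2(1+\sigma)}$ is tamed by the reverse-H\"older-type Lemma~\ref{hi3} on $\OO_{8r}$, and the log-H\"older remainder is dominated by the same expression up to a multiple of $\delta$. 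Optimizing the resulting exponent of $\delta$ then produces $\delta^{\tau_0/(4+\tau_0)}$ after a clean bookkeeping (using $p_2\le \ga+\omega(2R_0)\le \ga+1$ to bound $p_2'$).

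The main obstacle is the precise exponent $\tfrac{\tau_0}{4+\tau_0}$: it is not generic and arises only after carefully interpolating the small-BMO assumption against the higher integrability furnished by Lemma~\ref{hi1}, while simultaneously absorbing the log-H\"older perturbation $|Dw|^{p_2-p(x)}-1$ without disturbing the balance of exponents. Because the argument is otherwise structurally identical to \cite[Lemma~3.6]{BOP17}, I would simply cite that reference after laying out the identity above and the decomposition of the error term.
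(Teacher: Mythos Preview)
Your outline is correct and matches the standard argument; the paper itself gives no proof here and simply cites \cite[Lemma~3.6]{BOP17}, so your plan to sketch the identity, decompose the error, and then defer to that reference is exactly what the paper does (minus the sketch).

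Two small points of bookkeeping you may want to tidy. First, the lower bound $c\int|Dw-Dv|^{p_2}$ from monotonicity of $\bar{\mathbf B}$ is immediate only when $p_2\ge2$; for $p_2<2$ you get the weighted form $(|Dw|^2+|Dv|^2)^{(p_2-2)/2}|Dw-Dv|^2$ and must pass to $|Dw-Dv|^{p_2}$ via H\"older together with the energy bound $\int_{\OO_{3r}}|Dv|^{p_2}\le c\int_{\OO_{3r}}|Dw|^{p_2}$. Second, your intermediate exponent $\delta^{\sigma/(p_2'+\sigma)}$ is slightly off: after using $\theta\le 2\Lambda_1$ to reduce $\theta^{p_2'}$ to $c\theta$, H\"older with $(1+\sigma)/\sigma$ and $1+\sigma$ yields $\delta^{\sigma/(1+\sigma)}$. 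The choice $\sigma=\tau_0/4$ is then forced by the requirement $p_2(1+\sigma)\le p_1(1+\tau_0)$ (which follows from $p_2-p_1\le\omega(16r)\le\tau_0/4$), and this gives precisely $\delta^{\tau_0/(4+\tau_0)}$. Your parenthetical ``$p_2\le\ga+\omega(2R_0)$'' is not correct in general (only $p_2-p_1$ is small, not $p_2-\ga$), but you do not actually need it: $p_2'\le(\ga)'$ follows from $p_2\ge\ga$.
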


We next consider the following reference problem:
\begin{equation}\label{pem4}\left\{
    \begin{alignedat}{3}
        \ddiv \bar{\mathbf{B}}(D\bar{v}) &= 0 &&\quad \text{in} \ B_{2r}^+, \\
        \bar{v} &= 0 &&\quad\text{on} \ B_{2r} \cap \{x_n = 0\}. \\
    \end{alignedat}\right.
\end{equation}

Then this problem has the following Lipschitz regularity, and we also find a proper comparison estimate between \eqref{pem3} and \eqref{pem4}.
\begin{lemma}[\cite{Lie88}]
\label{fcompa1.5}
Let $\bar{v} \in W^{1,p_2}(B_{2r}^+)$ be a weak solution of \eqref{pem4}.
Then we have $D\bar{v} \in L^{\infty}(B_r^+)$ and
\begin{equation}\label{fcompa1.5_r1}
\Norm{D\bar{v}}_{L^{\infty}(B_r^+)} \le c \mint{B_{2r}^+}{|D\bar{v}|}{dx}
\end{equation}
for some positive constant $c$ depending only on $\data$.
\end{lemma}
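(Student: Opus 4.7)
The plan is to reduce \eqref{pem4} to the classical setting where Lieberman's boundary regularity theorem applies, and then refine the right-hand side to an $L^1$ average via a standard reverse-H\"older step.

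First, I would record the structural features of \eqref{pem4}. Since $\bar{\mathbf{B}}(\xi) = \fint_{B_{8r}^+}\mathbf{B}(\xi,x)\,dx$ depends only on $\xi$, the equation $\ddiv\bar{\mathbf{B}}(D\bar v)=0$ is a homogeneous quasilinear equation with \emph{constant} coefficients. Integrating the pointwise inequalities \eqref{growth2} in $x$, one sees that $\bar{\mathbf{B}}$ itself satisfies the natural $p_2$-growth and ellipticity bounds with the same structural constants $3\Lambda_1$ and $\Lambda_2/2$. Combined with the flat boundary piece $B_{2r}\cap\{x_n=0\}$ and the zero Dirichlet condition there, this is exactly the framework of \cite{Lie88}: a homogeneous quasilinear equation with natural $p_2$-growth and $C^{1,\alpha}$ (in fact flat) boundary with vanishing trace. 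Lieberman's theorem then yields $D\bar v\in L^\infty(B_{3r/2}^+)$ together with a quantitative bound
\[
\|D\bar v\|_{L^\infty(B_{3r/2}^+)}\le c\left(\fint_{B_{2r}^+}|D\bar v|^{p_2}\,dx\right)^{\!\!1/p_2},
\]
with $c$ depending only on $n$, $\Lambda_1$, $\Lambda_2$ and $p_2\in[\ga,\gb]$, hence only on \textsf{data}.

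Second, to replace the $L^{p_2}$ average on the right by the $L^1$ average claimed in \eqref{fcompa1.5_r1}, I would invoke the standard self-improvement available for homogeneous equations with $p_2$-growth and flat zero boundary data: a Caccioppoli inequality combined with Gehring/reverse-H\"older type estimates (again in Lieberman's regularity package, see also the interior analog in Lemma~\ref{hi1}) allows one to pass, for a solution of the homogeneous constant-coefficient equation, from an $L^{p_2}$ average to an $L^1$ average on a slightly larger ball, at the cost of a harmless absorption/enlarging of radii from $3r/2$ to $2r$. A covering/scaling argument then delivers the inequality on $B_r^+$.

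Third, a simple dilation $\bar v(x)\mapsto \bar v(2rx)/(2r)$ reduces everything to the unit scale, which is where Lieberman's theorem is customarily stated, and confirms that \eqref{fcompa1.5_r1} is scale invariant, with the constant $c$ depending only on \textsf{data}.

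The statement is cited from \cite{Lie88} and is not a new contribution; accordingly the main conceptual point is just verifying that \eqref{pem4} fits Lieberman's hypotheses (constant-coefficient $p_2$-growth equation, flat boundary, zero trace), which is immediate from \eqref{growth2} and the definition of $\bar{\mathbf{B}}$. The only mildly technical step is the passage from the $L^{p_2}$ to the $L^1$ average on the right, which is routine for homogeneous equations but would be the sole place where a careful covering argument is needed.
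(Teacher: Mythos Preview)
Your proposal is correct and in fact supplies more detail than the paper itself: the paper gives no proof for Lemma~\ref{fcompa1.5}, simply citing \cite{Lie88} for the result. Your sketch---verifying that $\bar{\mathbf{B}}$ inherits the $p_2$-growth and ellipticity from \eqref{growth2}, applying Lieberman's boundary regularity on the flat boundary with zero trace, and then upgrading the $L^{p_2}$ average to an $L^1$ average via reverse H\"older---is exactly the standard route one takes to unpack such a citation, and is consistent with how the paper treats this lemma as a black box borrowed from the literature.
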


\begin{lemma}[{\cite[Lemma 4.6]{BOR16}}]
\label{fcompa2}
For any $0 < \varepsilon <1$, there exists $\delta>0$, depending only on $\data$ and $\varepsilon$, such that if $v \in W^{1,p_2}(\OO_{3r})$ is the weak solution of \eqref{pem3}, then there is a weak solution $\bar{v} \in W^{1,p_2}(B_{2r}^+)$ of \eqref{pem4} satisfying
\begin{equation}\label{fcompa2_r2}
\mint{\OO_{2r}}{|Dv-D\bar{v}|^{p_2}}{dx} \le \varepsilon^{p_2} \mint{\OO_{3r}}{|Dv|^{p_2}}{dx},
\end{equation}
where $\bar{v}$ is extended by zero from $B_{2r}^+$ to $\OO_{2r}$.
\end{lemma}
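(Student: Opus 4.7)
The statement is an approximation lemma on the Reifenberg flat boundary setting: the solution $v$ of the frozen equation on $\Omega_{3r}$ is to be compared with the solution $\bar{v}$ on the idealized half-ball $B_{2r}^+$. Thanks to \eqref{reifen} we have $B_{2r}^+ \subset \Omega_{2r} \subset B_{2r}\cap\{x_n > -16\delta r\}$, so the symmetric difference between $\Omega_{2r}$ and $B_{2r}^+$ is a thin strip of measure $O(\delta)|B_{2r}|$. Smallness of $\delta$ is therefore exactly what is needed to force the two problems to agree.

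My first step is to construct $\bar{v}$ explicitly: define $\bar{v} \in W^{1,p_2}(B_{2r}^+)$ as the unique weak solution of $\ddiv\bar{\mathbf{B}}(D\bar{v})=0$ in $B_{2r}^+$ with Dirichlet data $\bar{v} = v$ on $\partial B_{2r}\cap\{x_n>0\}$ and $\bar{v} = 0$ on $B_{2r}\cap\{x_n=0\}$. Admissibility of the data is immediate since $v \in W^{1,p_2}(\Omega_{3r})$ by the standard energy estimate for \eqref{pem3} combined with Lemma~\ref{hi3}. Extending $\bar{v}$ by zero from $B_{2r}^+$ to $\Omega_{2r}$ produces a function in $W^{1,p_2}(\Omega_{2r})$. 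Testing the $\bar{v}$-equation against $\bar{v}-v$ on $B_{2r}^+$ and invoking \eqref{growth2} yields the uniform energy bound $\int_{B_{2r}^+}|D\bar{v}|^{p_2}\,dx \le c\int_{B_{2r}^+}|Dv|^{p_2}\,dx$.

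For the comparison itself, I would subtract the weak formulations of \eqref{pem3} (restricted to $B_{2r}^+$) and \eqref{pem4} and test against $v-\bar{v}$, appropriately truncated so as to become admissible in $W^{1,p_2}_0(B_{2r}^+)$. The ellipticity in \eqref{growth2} controls the left-hand side from below by $\int_{B_{2r}^+}|Dv-D\bar{v}|^{p_2}\,dx$. By construction $v-\bar{v}$ vanishes on the curved part $\partial B_{2r}\cap\{x_n>0\}$, so the only genuine error comes from the flat part $B_{2r}\cap\{x_n=0\}$, where $\bar{v}=0$ but $v$ is in general nonzero. This surface contribution is bounded via a trace/Poincar\'e-type inequality by the energy of $v$ over the thin strip $\Omega_{2r}\cap\{-16\delta r<x_n<0\}$; since that strip has measure at most $c\delta|B_{2r}|$, a reverse H\"older inequality for $Dv$ (a Meyers' type higher integrability for the homogeneous frozen equation \eqref{pem3}) absorbs the small measure at the cost of a factor $\delta^{\gamma}$ with some $\gamma=\gamma(\data)>0$. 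On $\Omega_{2r}\setminus B_{2r}^+$ one has $\bar{v}=0$, so $|Dv-D\bar{v}|=|Dv|$ there, and the same small-measure argument yields the identical bound.

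Putting the two error contributions together gives $\fint_{\Omega_{2r}}|Dv-D\bar{v}|^{p_2}\,dx \le c\,\delta^{\gamma}\fint_{\Omega_{3r}}|Dv|^{p_2}\,dx$, and choosing $\delta=\delta(\data,\varepsilon)$ small so that $c\,\delta^{\gamma}\le \varepsilon^{p_2}$ produces \eqref{fcompa2_r2}. The main obstacle is technical: $v-\bar{v}$ is not directly an element of $W^{1,p_2}_0(B_{2r}^+)$ because its trace on $\{x_n=0\}$ need not vanish, so some care is required either to manufacture a correct test function or to split the integral over $\Omega_{2r}$ so as to isolate the flat-boundary contribution. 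Since this lemma is recorded as Lemma~4.6 in \cite{BOR16}, the full Reifenberg flat boundary machinery applies here with only cosmetic modifications for the variable-exponent frozen vector field $\bar{\mathbf{B}}$.
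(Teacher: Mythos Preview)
The paper does not prove this lemma; it simply cites \cite[Lemma~4.6]{BOR16}.  The argument there, in the Byun--Wang tradition going back to \cite{BW04}, is by compactness and contradiction: one normalizes so that $\fint_{\Omega_{3r}}|Dv|^{p_2}\,dx=1$, assumes the conclusion fails along a sequence $\delta_k\to 0$, extracts a weak $W^{1,p_2}$-limit $v_0$, shows that $v_0$ solves \eqref{pem4} on $B_{2r}^+$ (using that the domains $\Omega^{(k)}_{2r}$ Hausdorff-converge to $B_{2r}^+$ and that $v_k$ vanishes on $\partial\Omega^{(k)}\cap B_{3r}$), and then upgrades to strong convergence via the higher integrability of $Dv_k$, contradicting the assumed failure.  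The function $\bar v$ is thus produced \emph{as a limit}, not by solving a prescribed boundary value problem.

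Your direct route is genuinely different and appealingly constructive, but there is a real gap in the first step.  The Dirichlet data you impose on $\bar v$---equal to $v$ on the spherical cap $\partial B_{2r}\cap\{x_n>0\}$ and to $0$ on the flat disk $B_{2r}\cap\{x_n=0\}$---do not match along the corner $\partial B_{2r}\cap\{x_n=0\}$, since the trace of $v$ there has no reason to vanish.  For $p_2\ge 2$ such a discontinuous datum does not lie in the trace space $W^{1-1/p_2,p_2}(\partial B_{2r}^+)$, so the Dirichlet problem you pose is ill-posed and no $\bar v\in W^{1,p_2}(B_{2r}^+)$ with that boundary behaviour exists.  Even if you repair this (e.g.\ by taking $\bar v=v$ on the full boundary and then arguing $\bar v$ is small near $\{x_n=0\}$), the core of your estimate---controlling the trace of $v$ on $\{x_n=0\}$ by the energy in the thin strip---requires a Poincar\'e inequality relative to the Reifenberg flat piece $\partial\Omega\cap B_{3r}$ where $v$ actually vanishes, and this is exactly the delicate point the compactness proof in \cite{BOR16} is designed to avoid.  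Your outline would need substantial additional work to close; the citation the paper gives is the standard and efficient route.
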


Combining all the previous estimates on the reference problems, we obtain the final comparison $L^1$-estimates near boundary regions.

\begin{lemma}\label{comparison1}
Let $\rho \ge 1$ and let $0< r \le \frac{R_0}{8}$.
Suppose that $R_0 > 0$ satisfies
\begin{equation}\label{hi1_a-1}
R_0 \le \min\mgh{\frac{R}{2},\frac1{M}, \frac{1}{4}, \frac{1}{2M_1}}
\end{equation}
and
\begin{equation}\label{bc_a}
p_2 - p_1 \le \omega(16r) \le \omega(2R_0) \le \min\mgh{\frac1{2n}, \frac{\Lambda_2}{2\Lambda_1}, \frac{\tau_0}{4}},
\end{equation}
with $M$ as in \eqref{M-def}, $M_1$ as in \eqref{M-w-1} and $\tau_0$ as in Lemma~\ref{hi1}.
Then for any $0 < \varepsilon <1$, there exists a small constant $0<\delta<1$, depending only on
$\data$ and $\varepsilon$, such that if $(p(\cdot),\mathbf{a},\OO)$ is $(\delta,R)$-vanishing and $\OO_{8r}$ forces \eqref{reifen}, and if $u\in\mathcal{A}_{\psi_1,\psi_2}$ is the weak solution of \eqref{sola wf} with
\begin{equation*}\label{u-1}
\mint{\OO_{8r}}{|Du|}{dx} \le \rho
\end{equation*}
and
\begin{equation}\label{compa_assumption}
\left[ \frac{1}{r^{n-1}}\left(\kappa(\Omega_{8r}) +\int_{\Omega_{8r}}\gh{|\diva(D\psi_1,x)| + |\diva(D\psi_2,x)|} dx\right)\right]^{\frac{1}{p_0-1}} \le \delta\rho,
\end{equation}
then there exists a weak solution $\bar{v}$ of \eqref{pem4} such that
\begin{equation*}
\mint{\OO_{2r}}{|Du-D\bar{v}|}{dx} \le \varepsilon\rho \quad \text{and} \quad \Norm{D\bar{v}}_{L^{\infty}(\OO_r)} \le c \rho
\end{equation*}
for some $c=c(\data) > 0$.
\end{lemma}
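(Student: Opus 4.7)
The plan is to chain the five comparison estimates established in this section, starting from the original obstacle solution $u$ and ending at a weak solution $\bar{v}$ of the homogeneous frozen half-ball problem \eqref{pem4}. Explicitly, I will successively compare
\[
u \ \longleftrightarrow \ z \ \longleftrightarrow \ h \ \longleftrightarrow \ w \ \longleftrightarrow \ v \ \longleftrightarrow \ \bar{v},
\]
using Lemmas \ref{lem-u-z}, \ref{lem-z-h}, \ref{lem-h-w}, \ref{fcompa1} and \ref{fcompa2} in that order, and then extract the Lipschitz bound on $\bar v$ from Lemma \ref{fcompa1.5}. Throughout, the strategy is to absorb every error term on the right-hand side into $\varepsilon\rho$ by suitable choice of $\delta$. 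The bounds \eqref{hi1_a-1}--\eqref{bc_a} are precisely those needed for the auxiliary results in this section (higher integrability, flat-boundary approximation) to apply.

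The first three steps are controlled by the hypothesis \eqref{compa_assumption} together with $\fint_{\Omega_{8r}}|Du|\,dx\le\rho$. Indeed, the ``main'' term produced by Lemma~\ref{lem-u-z} is exactly the left-hand side of \eqref{compa_assumption}, giving $\le\delta\rho$ directly. The $\chi_{\{p_0<2\}}$-term has the form
\[
\frac{c}{r^{n-1}}\bigl(\kappa(\Omega_{8r})+\textstyle\int_{\Omega_{8r}}|\diva(D\psi_2,\cdot)|\bigr)\Bigl(\fint_{\Omega_{8r}}|Du|\Bigr)^{2-p_0},
\]
and raising \eqref{compa_assumption} to the $(p_0-1)$-th power yields $\frac{1}{r^{n-1}}(\kappa+\cdots)\le(\delta\rho)^{p_0-1}$, so this term is bounded by $c\delta^{p_0-1}\rho^{p_0-1}\rho^{2-p_0}=c\delta^{p_0-1}\rho$, which is small since $p_0>1$. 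Consequently $\fint_{\Omega_{8r}}|Du-Dz|\,dx\le c\delta^{\min\{1,\ga-1\}}\rho$; by the triangle inequality $\fint_{\Omega_{8r}}|Dz|\,dx\le c\rho$, and iterating the same reasoning for Lemma~\ref{lem-z-h} and Lemma~\ref{lem-h-w} (each time bounding the average of the previous gradient by a constant multiple of $\rho$) yields
\[
\fint_{\Omega_{8r}}|Du-Dw|\,dx\le c\,\delta^{\min\{1,\ga-1\}}\rho\quad\text{and}\quad\fint_{\Omega_{8r}}|Dw|\,dx\le c\rho.
\]

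For the remaining two steps, Lemma~\ref{fcompa1} combined with $\rho\ge 1$ gives
\[
\Bigl(\fint_{\Omega_{3r}}|Dw-Dv|^{p_2}\,dx\Bigr)^{1/p_2}\le c\,\delta^{\frac{\tau_0}{p_2(4+\tau_0)}}\rho,
\]
and hence, by Jensen, the same estimate (with a different exponent of $\delta$) holds for $\fint|Dw-Dv|\,dx$. Also, the standard energy estimate for \eqref{pem3} gives $\fint_{\Omega_{3r}}|Dv|^{p_2}\,dx\le c\rho^{p_2}$. Now apply Lemma~\ref{fcompa2} with threshold $\varepsilon/4$ (which fixes a first smallness requirement on $\delta$) to produce $\bar v$ satisfying $\fint_{\Omega_{2r}}|Dv-D\bar v|\,dx\le (\varepsilon/4)\rho$. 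Summing the five triangle-inequality contributions and then choosing $\delta$ small enough (depending on $\data,\varepsilon$) so that each $\delta$-power term is at most $(\varepsilon/4)\rho$ yields $\fint_{\Omega_{2r}}|Du-D\bar v|\,dx\le\varepsilon\rho$.

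For the Lipschitz bound, Lemma~\ref{fcompa1.5} gives $\|D\bar v\|_{L^{\infty}(B_r^+)}\le c\fint_{B_{2r}^+}|D\bar v|\,dx$; since $B_{2r}^+\subset\Omega_{2r}$ by \eqref{reifen}, we may estimate
\[
\fint_{B_{2r}^+}|D\bar v|\,dx\le c\Bigl(\fint_{\Omega_{2r}}|Du-D\bar v|\,dx+\fint_{\Omega_{2r}}|Du|\,dx\Bigr)\le c\rho,
\]
and since $\Omega_r\subset B_r^+\cup\{-16\delta r<x_n<0\}$, together with $\bar v$ being extended by zero outside $B_{2r}^+$ (so $D\bar v\equiv 0$ on the thin slab), we conclude $\|D\bar v\|_{L^{\infty}(\Omega_r)}\le c\rho$. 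The main obstacle is the bookkeeping of how \eqref{compa_assumption} and the a priori bound $\fint|Du|\le\rho$ jointly tame the singular $\chi_{\{p_0<2\}}$ factor in the first three comparison lemmas; once this is handled, everything else is a routine chaining of the stated estimates with a final choice of $\delta$ depending on $\data$ and $\varepsilon$.
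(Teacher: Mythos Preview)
Your proposal is correct and follows exactly the approach the paper intends; the paper itself omits the proof entirely, merely stating ``Combining all the previous estimates on the reference problems, we obtain the final comparison $L^1$-estimates near boundary regions'' before the statement of the lemma. One minor point: when you assert that ``the standard energy estimate for \eqref{pem3} gives $\fint_{\Omega_{3r}}|Dv|^{p_2}\,dx\le c\rho^{p_2}$'', you should route this through Lemma~\ref{hi3}, since the energy estimate for $v$ only yields $\fint_{\Omega_{3r}}|Dv|^{p_2}\le c\fint_{\Omega_{3r}}|Dw|^{p_2}$, and it is Lemma~\ref{hi3} that converts the $L^1$ bound $\fint_{\Omega_{8r}}|Dw|\le c\rho$ into the $L^{p_2}$ bound $\fint_{\Omega_{3r}}|Dw|^{p_2}\le c\rho^{p_2}$ (using $\rho\ge1$).
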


\begin{remark}\label{rem-p-const-3}
If $p^- \ge 2$, then we can replace the assumption \eqref{compa_assumption} by
\begin{equation*}
\left[ \frac{\kappa(\Omega_{8r})}{r^{n-1}}\right]^{\frac{1}{p_0-1}} +\gh{\fint_{\Omega_{8r}}|D\psi_1|^{p(x)} dx}^{\frac{1}{p_0}} + \gh{\fint_{\Omega_{8r}}|D\psi_2|^{p(x)} dx}^{\frac{1}{p_0}} \le \delta\rho,
\end{equation*}
see Remark~\ref{rem-p-const-2} for details. In this case, we can drop the assumptions $\diva(D\psi_1,\cdot)$, $\diva(D\psi_2,\cdot)\in L^1(\Omega)$ in \eqref{psi-g-L1}.
\end{remark}



\section{Proof of Theorem~\ref{main theorem}}\label{cover}

In this section, we prove our main theorem (Theorem~\ref{main theorem}). We first construct sequences $\{u_i\}$ and $\{\mu_i\}$ satisfying \eqref{sol-def-mu}--\eqref{seq-converge}.
For any $\mu\in\mathcal{M}_b(\Omega)$ we may regard that $\mu$ is defined on $\R^n$ by the zero extension to $\R^n\setminus\Omega$.
Taking $\phi\in C_0^\infty(B_1)$ as the standard mollifier, we define $\phi_i(x):=i^n\phi(ix)$ for $i\in\N$ and $x\in\R^n$.
We now consider
 $$\mu_i:=\mu\ast\phi_i.$$
Then $\mu_i\in C^\infty_c(\R^n)$, in particular, $\mu_i\in W^{-1,p'(\cdot)}(\Omega)\cap L^1(\Omega)$ satisfying \eqref{sol-def-mu} and
\begin{equation}\label{L_1-mu}
\|\mu_i\|_{L^1(\Omega)}\le|\mu|(\Omega).
\end{equation}
Having such a function $\mu_i$, we construct the corresponding weak solution $u_i\in\mathcal{A}_{\psi_1,\psi_2}$ to the variational inequality \eqref{sola wf} with $\mu$ replaced by $\mu_i$  such that
\begin{equation}\label{seq-converge-1}
u_i\rightarrow u \quad \text{in} \  W^{1, r(\cdot)}(\Omega)
\end{equation}
for all continuous functions $r(\cdot)$ on $\overline{\Omega}$ with \eqref{r-range}, as in Lemma~\ref{lem-exist}.

Throughout this section, we suppose that $(p(\cdot),\mathbf{a},\OO)$ is $(\delta,R)$-vanishing.
Moreover, we assume that $R_0>0$
satisfies
\begin{equation}\label{r0-1}
R_0 \le \min \mgh{\frac{R}{2}, \frac{1}{6M_1}, \frac{1}{M+1}} \quad \text{and} \quad
\omega(2R_0) \le \min\mgh{\frac{1}{2n}, \frac{\Lambda_2}{2\Lambda_1},  \frac{\tau_0}{4}},
\end{equation}
where $M$, $M_1$ and $\tau_0$ are given in \eqref{M-def}, \eqref{M-w-1} and Lemma \ref{hi1}, respectively.
According to \eqref{L_1-mu} and \eqref{seq-converge-1}, we see that $R_0$ satisfies \eqref{hi1_a-1} and \eqref{bc_a} with $u$ and $\mu$ replaced by $u_i$ and $\mu_i$, respectively, for sufficiently large $i$.

Our strategy for proving Theorem~\ref{main theorem} is to apply a Vitali type covering lemma (Lemma~\ref{covering}), under the $L^1$-comparison estimates (Lemma~\ref{comparison1}) in Section~\ref{comparison} and Lemma~\ref{covering2} below, to obtain the power decay estimate for upper level sets of $Du$ (see \eqref{ma2} below). Combining this decay estimate, Lemma~\ref{distribution2} and the standard energy $L^1$-estimate (Section~\ref{Standard $L^1$-estimate}), we finally derive the desired regularity estimate \eqref{main_r} in Theorem~\ref{main theorem}.

\subsection{Vitali type covering}\label{Vitali type covering}

For any fixed $\varepsilon \in (0,1)$ and $N>1$, we define
\begin{equation}\label{smallness}
\lambda_0 := \frac{1}{\varepsilon|B_{R_0}|} \mgh{\integral{\OO}{|Du|}{dx} +1} > 1
\end{equation}
and upper-level sets: for $k \in \mathbb{N} \cup \{0\}$,
\begin{equation*}
C_{N,k} := \mgh{x \in \OO : \M(|Du|)(x) > N^{k+1}\lambda_0}
\end{equation*}
and
\begin{equation*}
\begin{aligned}
D_{N,k} &:= \mgh{x \in \OO : \M(|Du|)(x) > N^{k}\lambda_0} \cup \mgh{x \in \OO : \bgh{\M_1(\kappa)(x)}^{\frac{1}{p(x)-1}} > \delta N^{k}\lambda_0}\\
&\hspace{1cm}
\cup \mgh{x \in \OO : \bgh{\M_1(\Psi_1)(x)}^{\frac{1}{p(x)-1}} > \delta N^{k}\lambda_0}\\
&\hspace{2cm}
\cup \mgh{x \in \OO : \bgh{\M_1(\Psi_2)(x)}^{\frac{1}{p(x)-1}} > \delta N^{k}\lambda_0},
\end{aligned}
\end{equation*}
where $\Psi_1(x):=\diva(D\psi_1,x)$ and $\Psi_2(x):=\diva(D\psi_2,x)$ with the conditions \eqref{psi-g-L1}.
Here $\M$ and $\M_1$ are defined by \eqref{max ftn} and \eqref{fractional max ftn}, respectively. Remark that if for $p^- \ge 2$ we instead define
\begin{equation*}
\begin{aligned}
D_{N,k} &:= \mgh{x \in \OO : \M(|Du|)(x) > N^{k}\lambda_0} \cup \mgh{x \in \OO : \bgh{\M_1(\kappa)(x)}^{\frac{1}{p(x)-1}} > \delta N^{k}\lambda_0}\\
&\hspace{1cm}
\cup \mgh{x \in \OO : \bgh{\M\gh{|D\psi_1|^{p(\cdot)}}(x)}^{\frac{1}{p(x)}} > \delta N^{k}\lambda_0}\\
&\hspace{2cm}
\cup \mgh{x \in \OO : \bgh{\M\gh{|D\psi_2|^{p(\cdot)}}(x)}^{\frac{1}{p(x)}} > \delta N^{k}\lambda_0},
\end{aligned}
\end{equation*}
then we can drop the assumptions $\diva(D\psi_1,\cdot)$, $\diva(D\psi_2,\cdot)\in L^1(\Omega)$ in \eqref{psi-g-L1}
(see Remark~\ref{rem-p-const-2} and \ref{rem-p-const-3} for details).

We now verify two assumptions in Lemma~\ref{covering}.
\begin{lemma}\label{covering2}
There exists a constant $N_o=N_o(\data)>1$ such that if $N\ge N_o$, then for any $\varepsilon > 0$,
\begin{enumerate}[\rm(i)]
\item $\displaystyle{|C_{N,k}| \le \frac{\varepsilon}{1000^n}  |B_{R_0}|}$, $k \in \mathbb{N} \cup \{0\}$, and

\item there exists
$0<\delta<1$, depending only on $\data$ and $\varepsilon$, so that
we have $$\OO_{r_0}(y_0) \subset D_{N,k}$$
for all $y_0 \in\Omega$, $r_0 \le \frac{R_0}{1000}$, and $k \in \mathbb{N} \cup \{0\}$,  with
$|C_{N,k} \cap B_{r_0}(y_0)| \ge \varepsilon |B_{r_0}(y_0)|$.
\end{enumerate}
\end{lemma}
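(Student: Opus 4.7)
The plan is to attack the two assertions separately: part (i) is a direct consequence of the weak-$(1,1)$ bound for the Hardy–Littlewood maximal operator, while part (ii) is a contrapositive argument that combines a doubling/covering comparison for the maximal function with the $L^{1}$–comparison estimate in Lemma~\ref{comparison1}.

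\textbf{Part (i).} Since $N^{k+1}\lambda_0\ge N\lambda_0$ for every $k\ge 0$, applying the weak $(1,1)$ inequality for $\M$ and then invoking the definition of $\lambda_0$ in \eqref{smallness} gives
\begin{equation*}
|C_{N,k}|\le\frac{c(n)}{N\lambda_0}\int_{\OO}|Du|\,dx\le\frac{c(n)\,\varepsilon\,|B_{R_0}|}{N}.
\end{equation*}
Choosing $N_o=N_o(\data)$ with $N_o\ge 1000^{n}c(n)$ yields the required bound for all $N\ge N_o$.

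\textbf{Part (ii): setup.} I argue by contrapositive. Assume $\Omega_{r_0}(y_0)\not\subset D_{N,k}$; pick a witness $x_0\in\Omega_{r_0}(y_0)$ at which simultaneously
\begin{equation*}
\M(|Du|)(x_0)\le N^{k}\lambda_0,\quad [\M_1(\kappa)(x_0)]^{\frac{1}{p(x_0)-1}}\le\delta N^{k}\lambda_0,\quad [\M_1(\Psi_i)(x_0)]^{\frac{1}{p(x_0)-1}}\le\delta N^{k}\lambda_0.
\end{equation*}
Set $\rho:=N^{k}\lambda_0\ge 1$ and choose a reference point $x^{*}$ (interior or on $\partial\Omega$, depending on whether $\mathrm{dist}(y_0,\partial\Omega)$ is larger or smaller than, say, $100\,r_0$) together with a radius $r\simeq r_0$ such that $B_{r_0}(y_0)\subset\Omega_{r}(x^{*})\subset\Omega_{8r}(x^{*})\subset B_{C r_0}(x_0)$, and so that, in the boundary case, $\Omega_{8r}(x^{*})$ sits in the coordinates \eqref{reifen} provided by the Reifenberg condition. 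Since $r_0\le R_0/1000$, the smallness \eqref{r0-1} guarantees $8r\le R_0$, so the hypotheses \eqref{hi1_a-1}, \eqref{bc_a} of Lemma~\ref{comparison1} hold. The bound $\M(|Du|)(x_0)\le\rho$ controls $\fint_{\Omega_{8r}(x^{*})}|Du|\,dx\le c\rho$, and the $\M_1$–bounds above, together with the definition \eqref{fractional max ftn}, translate into
\begin{equation*}
\frac{1}{r^{n-1}}\Bigl(\kappa(\Omega_{8r}(x^{*}))+\int_{\Omega_{8r}(x^{*})}\bigl(|\Psi_1|+|\Psi_2|\bigr)\,dx\Bigr)\le c(\delta\rho)^{p(x_0)-1}\le c(\delta\rho)^{p_0-1},
\end{equation*}
using log-Hölder continuity \eqref{log-holder} to exchange $p(x_0)$ and $p_0$ modulo a harmless multiplicative constant (since $\rho\ge 1$ and $r\le R_0$). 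Up to an adjustment $\rho\mapsto c\rho$ and a tightening of $\delta$, this is exactly the hypothesis \eqref{compa_assumption} of Lemma~\ref{comparison1}.

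\textbf{Part (ii): contradiction.} Apply Lemma~\ref{comparison1} with a threshold $\varepsilon_1\in(0,1)$ to be chosen; this provides $\delta=\delta(\data,\varepsilon_1)$ and a solution $\bar v$ of \eqref{pem4} with
\begin{equation*}
\fint_{\Omega_{2r}(x^{*})}|Du-D\bar v|\,dx\le\varepsilon_1\rho,\qquad\|D\bar v\|_{L^{\infty}(\Omega_{r}(x^{*}))}\le c_{*}\rho,
\end{equation*}
where $c_{*}=c_{*}(\data)$. For any $x\in B_{r_0}(y_0)$ and any radius $s>r_0$, the inclusion $B_{s}(x)\subset B_{3s}(x_0)$ gives $\fint_{B_s(x)}|Du|\le 3^{n}\rho$, so the maximal function $\M(|Du|)(x)>N^{k+1}\lambda_0$ must be attained by radii $s\le r_0$, hence on balls contained in $B_{2r_0}(y_0)\subset\Omega_{r}(x^{*})$, provided $N\ge 2\cdot 3^{n}$. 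Taking also $N\ge 4c_{*}$ so that $|D\bar v|\le N^{k+1}\lambda_0/2$ pointwise on $\Omega_{r}(x^{*})$, the triangle inequality reduces the problem to
\begin{equation*}
C_{N,k}\cap B_{r_0}(y_0)\subset\bigl\{x\in B_{r_0}(y_0):\M_{\Omega_{2r_0}(y_0)}(|Du-D\bar v|)(x)>N^{k+1}\lambda_0/2\bigr\}.
\end{equation*}
A final application of weak $(1,1)$ together with the $L^{1}$–comparison bound gives
\begin{equation*}
|C_{N,k}\cap B_{r_0}(y_0)|\le\frac{c(n)}{N^{k+1}\lambda_0}\int_{\Omega_{2r}(x^{*})}|Du-D\bar v|\,dx\le\frac{c\,\varepsilon_1}{N}|B_{r_0}(y_0)|,
\end{equation*}
which is strictly less than $\varepsilon|B_{r_0}(y_0)|$ once $\varepsilon_1=\varepsilon_1(\data,\varepsilon,N)$ is chosen small, and then $\delta=\delta(\data,\varepsilon)$ via Lemma~\ref{comparison1}. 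This contradicts the standing hypothesis $|C_{N,k}\cap B_{r_0}(y_0)|\ge\varepsilon|B_{r_0}(y_0)|$, completing the proof.

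\textbf{Main obstacle.} The delicate step is the verification of \eqref{compa_assumption}: the fractional maximal bounds of $\kappa$ and $\Psi_i$ are known at $x_0$ with exponent $\tfrac{1}{p(x_0)-1}$, whereas Lemma~\ref{comparison1} requires the exponent $\tfrac{1}{p_0-1}$ associated to the ball $\Omega_{8r}(x^{*})$, so log-Hölder continuity must be invoked to trade exponents without losing the smallness constant $\delta$. The boundary case, where $x^{*}\in\partial\Omega$ must be chosen compatibly with the Reifenberg coordinates \eqref{reifen} while preserving $B_{r_0}(y_0)\subset\Omega_{r}(x^{*})$, is the other place where the geometry has to be handled with care.
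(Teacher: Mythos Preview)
Your proof is correct and follows essentially the same approach as \cite[Lemmas 4.1 and 4.2]{BOP17}, to which the paper defers without giving details: weak $(1,1)$ for part (i), and for part (ii) the standard contrapositive covering argument combining the $L^1$-comparison Lemma~\ref{comparison1} with the Lipschitz bound on $D\bar v$ and another weak $(1,1)$ application. Your identification of the exponent exchange $p(x_0)\leftrightarrow p_0$ as the delicate point is exactly right; it is handled via the log-H\"older condition \eqref{log-holder} together with the constraint $R_0\le 1/M$ in \eqref{r0-1}, which controls factors of the form $M^{\omega(R_0)}$ and $(1/\delta)^{\omega(R_0)}$.
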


\begin{proof}
The proof is almost like that of \cite[Lemma 4.1 and 4.2]{BOP17}.
\end{proof}

\subsection{Standard energy $L^1$-estimate}\label{Standard $L^1$-estimate}
To prove Theorem~\ref{main theorem}, we need the standard energy $L^1$-estimate for the gradient of a solution $u$ given by \eqref{seq-converge-1} (see \eqref{L_1-est-Du} below). To do this, with the function $g\in W^{1,p(\cdot)}_0(\Omega)$ satisfying \eqref{g-L1},
we write
\begin{gather*}
C_k^+:=\mgh{x\in\Omega:k\le |u_i-g|< k+1\ \ \mbox{and}\ \ p(x)\ge2},\\
C_k^-:=\mgh{x\in\Omega:k\le |u_i-g|< k+1\ \ \mbox{and}\ \ p(x)<2} \quad \text{and} \quad C_k := C_k^+ \cup C_k^-
\end{gather*}
for all $k\in\N\cup\{0\}$. Recall the truncation \eqref{trun}.
Taking a comparison function $\varphi=u_i + \Phi_k(g-u_i)$ in the variational inequality \eqref{sola wf} replaced $u$ and $\mu$ by $u_i$ and $\mu_i$, respectively,
we have
\begin{equation*}\label{a-a(u_i-g)}
\begin{aligned}
&\int_{\Omega} \big(\ma(Du_i,x)-\ma(Dg,x)\big)\cdot D[\Phi_k(u_i-g)] \ dx\le
 |\mu_i|(\Omega) + \int_{\Omega} |\diva(Dg,x)| \ dx.
\end{aligned}
\end{equation*}
It follows from \eqref{monotonicity} and \eqref{L_1-mu} that
\begin{equation}
\begin{aligned}\label{C}
&\int_{C_k^+}|Du_i-Dg|^{p(x)} \ dx
+
\int_{C_k^-} \big(|Du_i|^2+|Dg|^2\big)^{\frac{p(x)-2}2}|Du_i-Dg|^2\ dx\\
&\qquad\le
c\underbrace{\left( |\mu|(\Omega) + \int_{\Omega} |\diva(Dg,x)| \ dx \right)}_{=:P}
\end{aligned}
\end{equation}
for all $k\in\N\cup\{0\}.$
Let $s=\frac{2}{3-\ga}$. Note that if $2-\frac1n<\ga<2$, then $s\in(1,\ga)$.
Now we compute
\begin{equation}\label{compa}
\begin{aligned}
&\int_{C_0^-}|Du_i-Dg|^{s}\ dx \\
&\le
\int_{C_0^-}\left(|Du_i|^2+|Dg|^2 \right)^{\frac{(p(x)-2)s}{4}} |Du_i-Dg|^{s} \left(|Du_i|^2+|Dg|^2+1 \right)^{\frac{(2-\ga)s}{4}} dx\\
& \le
cP+\frac12\int_{C_0^-} |Du_i-Dg|^s\ dx+c\int_{C_0^-}\gh{|Dg|+1}\ dx,\end{aligned}
\end{equation}
where we employed  \eqref{C} and Young's inequality with exponents $\frac2{s}$ and $\frac2{(2-\ga)s}$. Indeed, $(2-p^-)s=2-s$.
Then we combine \eqref{C} and \eqref{compa}, to discover
\begin{equation}\label{C_0}
\begin{aligned}
&\int_{C_0}|Du_i-Dg|\ dx\\
&\qquad\le
c\int_{C_0^+}|Du_i-Dg|^{p(x)}\ dx+c \int_{C_0^-}|Du_i-Dg|^s\ dx+c|C_0|\\
&\qquad\le
cP+c\int_\Omega \gh{|Dg|+1}\ dx.
\end{aligned}
\end{equation}
On the other hand, for $k\ge1$ we likewise deduce from \eqref{C} that
\begin{equation}
\begin{aligned}\label{C_k-}
\int_{C_k^-}|Du_i-Dg|^s\ dx
\le
cP^{\frac{s}2}\gh{\int_{C_k^-} \gh{|Du_i|+|Dg|+1}\ dx}^{\frac{2-s}2}.
\end{aligned}
\end{equation}
Then combining \eqref{C} and \eqref{C_k-} yields
\begin{equation*}
\begin{aligned}
\int_{C_k}|Du_i-Dg|\ dx
&\le
\gh{\int_{C_k^+}|Du_i-Dg|^{p(x)}\ dx+|\Omega|}^{\frac1{\ga}}|C_k^+|^{\frac{\ga-1}{\ga}}\\
&\qquad+
\gh{\int_{C_k^-}|Du_i-Dg|^{s}\ dx}^{\frac1{s}}|C_k^-|^{\frac{s-1}{s}}\\
&\le
c(P+|\Omega|)^{\frac1{\ga}}|C_k^+|^{\frac{\ga-1}{\ga}}\\
&\quad+
c \underbrace{\gh{P|C_k^-|^{\ga-1}}^{\frac12} \gh{\int_{C_k^-} \gh{|Du_i|+|Dg|+1}\ dx}^{\frac{2-\ga}{2}}}_{=:(*)}.
\end{aligned}
\end{equation*}
Applying Young's inequality with $\gh{\frac2{\ga},\frac2{2-\ga}}$ to $(*)$, we find
\begin{align*}
\int_{C_k}|Du_i-Dg|\ dx
\le
c(P+|\Omega|)^{\frac1{\ga}}|C_k|^{\frac{\ga-1}{\ga}}
+
c\int_{C_k}\gh{|Dg|+1}\ dx.
\end{align*}
Then we obtain
\begin{align}\label{sum-C-k}
\sum_{k=1}^\infty\int_{C_k}|Du_i-Dg|\ dx
\le
c(P+|\Omega|)^{\frac1{\ga}}\sum_{k=1}^\infty|C_k|^{\frac{\ga-1}{\ga}}
+
c\int_{\Omega}\gh{|Dg|+1}\ dx.
\end{align}
On the other hand, for $\frac{1}{\ga-1} < t < \frac{n}{n-1}$, the definition of $C_k$ means
\begin{align*}
\sum_{k=1}^\infty|C_k|^{\frac{\ga-1}{\ga}}
&\le
\sum_{k=1}^{\infty} \gh{\frac{1}{k}}^{\frac{t(\ga-1)}{\ga}} \gh{\integral{C_k}{|u_i-g|^t}{dx}}^{\frac{\ga-1}{\ga}}.
\end{align*}
Applying H\"{o}lder's inequality and Sobolev's inequality, we have
\begin{equation}
    \begin{aligned}\label{rmk5-1}
        \sum_{k=1}^\infty|C_k|^{\frac{\ga-1}{\ga}}
        &\le \gh{\sum_{k=1}^{\infty} \gh{\frac{1}{k}}^{t(\ga-1)} }^{\frac1{\ga}}
           \gh{\sum_{k=1}^\infty\integral{C_k}{|u_i-g|^t}{dx}}^{\frac{\ga-1}{\ga}}\\
        &\le c \gh{\integral{\OO}{|Du_i-Dg|}{dx}}^{\frac{t(\ga-1)}{\ga}} |\OO|^{\gh{1-\frac{(n-1)t}{n}} \frac{\ga-1}{\ga}}
    \end{aligned}
    \end{equation}
for some $c=c(n,\ga,t)>0$.
Here, we used the fact that $\frac{1}{\ga-1} < t < \frac{n}{n-1}$.
We now combine \eqref{C_0}, \eqref{sum-C-k} and \eqref{rmk5-1} to discover
\begin{align*}
&\integral{\OO}{|Du_i-Dg|}{dx}
=
\int_{C_0}|Du_i-Dg|\ dx+ \sum_{k=1}^\infty\int_{C_k}|Du_i-Dg|\ dx\\\nonumber
&\qquad\qquad\le
c(P+|\Omega|)^{\frac1{\ga}}|\OO|^{\gh{1-\frac{(n-1)t}{n}} \frac{\ga-1}{\ga}}\gh{\integral{\OO}{|Du_i-Dg|}{dx}}^{\frac{t(\ga-1)}{\ga}} \\\nonumber
&\qquad\qquad\qquad+
cP+c\int_{\Omega}\gh{|Dg|+1}\ dx
\end{align*}
for some $c = c(\data,t) > 0$.
After observing that $t < \frac{n}{n-1} \le \frac{\ga}{\ga-1}$ from $\ga \le n$,
we use Young's inequality with $\gh{\frac{\ga}{t(\ga-1)}, \frac{\ga}{\ga-t(\ga-1)}}$, to discover
\begin{equation*}
\begin{aligned}
&\integral{\OO}{|Du_i-Dg|}{dx}\\
&\qquad \le c P  + c {(P+|\Omega|)}^{\frac{1}{\ga-(\ga-1)t}} |\OO|^{\gh{1-\frac{(n-1)t}{n}} \frac{\ga-1}{\ga-t(\ga-1)}}
+
c\int_{\Omega}\gh{|Dg|+1}\ dx.
\end{aligned}
\end{equation*}
Recalling $P$ in \eqref{C} and applying \eqref{seq-converge-1}, we deduce
\begin{align*}
&\integral{\OO}{|Du|}{dx} \\\nonumber
&\quad\le
c \left(|\mu|(\Omega)+|\Omega| + \int_{\Omega} |\diva(Dg,x)| \ dx +\int_{\Omega} |Dg|\ dx\right)  \\\nonumber
&\qquad+ c {\left(|\mu|(\Omega) +|\Omega|+ \int_{\Omega} |\diva(Dg,x)| \ dx \right)}^{\frac{1}{\ga-(\ga-1)t}} |\OO|^{\gh{1-\frac{(n-1)t}{n}} \frac{\ga-1}{\ga-t(\ga-1)}}.
\end{align*}
Selecting $t := \frac{1}{\ga-1} + \alpha$ for small $s$ with $0 < \alpha \le \frac{1}{2}\gh{\frac{n}{n-1} - \frac{1}{\ga-1}} <1$, we therefore conclude the estimate
\begin{equation}
\begin{aligned}\label{L_1-est-Du}
\integral{\OO}{|Du|}{dx}
&\le
c \gh{|\mu|(\OO)+ \int_{\Omega} |\diva(Dg,x)| \ dx + \int_{\Omega} |Dg|\ dx+1} \\
&\qquad
+ c\gh{|\mu|(\OO)+ \int_{\Omega} |\diva(Dg,x)| \ dx +1}^{\frac{1}{(\ga-1)(1-\alpha)}}
\end{aligned}
\end{equation}
for some $c=c(\data,\alpha,\OO)>0$.

\begin{remark}\label{rem-p-const-3.5}
We note that $c$ in \eqref{L_1-est-Du} goes to $+\infty$ when $\alpha$ goes to zero, since the constant $c$ in \eqref{rmk5-1} blows up as $\alpha \searrow 0$.
On the other hand, if $p(\cdot)$ is a constant, then we can deduce from the normalization property that
\begin{align}\label{L_1-const}
\int_\Omega |Du| \ dx\le
c\int_{\Omega} \mathcal{M}_1(\mu)^{\frac1{p-1}}\ dx +
c\int_{\Omega} \mathcal{M}_1(G)^{\frac1{p-1}}\ dx +
c\int_{\Omega} |Dg|\ dx
\end{align}
for some $c=c(n,\Lambda_1,\Lambda_2,p)>0$,
where $G(\cdot):=\diva(Dg,\cdot)$.
Indeed, this estimate can be derived by a similar way with the proof of Lemma~\ref{lem-u-z}.
\end{remark}

\begin{remark}\label{rem-p-const-4}
If $p^- \ge 2$, then we can replace the estimate \eqref{C} by
\begin{equation*}
\int_{C_k}|Du_i-Dg|^{p(x)} \ dx \le
c |\mu|(\Omega) + c\int_{\Omega} |Dg|^{p(x)} \ dx.
\end{equation*}
In this case, we infer
\begin{equation*}
\begin{aligned}
\integral{\OO}{|Du|}{dx}
&\le
c \gh{|\mu|(\OO)+ \int_{\Omega} |Dg|^{p(x)} \ dx + 1} \\
&\qquad
+ c\gh{|\mu|(\OO)+ \int_{\Omega} |Dg|^{p(x)} \ dx +1}^{\frac{1}{(\ga-1)(1-\alpha)}}
\end{aligned}
\end{equation*}
for some $c=c(\data,\alpha,\OO)>0$. Thus for $p^- \ge 2$, we can drop the assumption $\diva(Dg,\cdot)\in L^1(\Omega)$  in \eqref{g-L1}.
\end{remark}

\subsection{Proof of Theorem~\ref{main theorem}}\label{proof of main theorem}

Let $\varepsilon>0$. Taking $N_o$ and $\delta$ given in Lemma \ref{covering2}, we apply Lemma \ref{covering} to have
\begin{equation*}\label{ma1}
|C_{N_o,k}| \le  \left(\frac{80}{7}\right)^n\varepsilon |D_{N_o,k}| =:  \varepsilon_1 |D_{N_o,k}| \quad \text{for all} \ k \in \mathbb{N} \cup \{0\}.
\end{equation*}
An iteration argument and Fubini's theorem yield a power decay estimate for upper level sets of $\M(|Du|)$ as follows:
\begin{equation}\label{ma2}
    \begin{aligned}
        S &:= \sum_{k=1}^{\infty} N_o^{qk} \norm{\mgh{x \in \OO : \M(|Du|)(x) > N^{k}\lambda_0}}\\
        &\le |\OO| \sum_{k=1}^{\infty} \gh{N_o^q \varepsilon_1}^k\\
        &\quad + \sum_{i=1}^{\infty} \gh{N_o^q \varepsilon_1}^i \sum_{k=i}^{\infty} N_o^{q(k-i)}  \norm{\mgh{x \in \OO : \bgh{\M_1(\kappa)(x)}^{\frac{1}{p(x)-1}} > \delta N_o^{k-i}\lambda_0}}\\
        &\quad + \sum_{i=1}^{\infty} \gh{N_o^q \varepsilon_1}^i \sum_{k=i}^{\infty} N_o^{q(k-i)}  \norm{\mgh{x \in \OO : \bgh{\M_1(\Psi_1)(x)}^{\frac{1}{p(x)-1}} > \delta N_o^{k-i}\lambda_0}}\\
        &\quad + \sum_{i=1}^{\infty} \gh{N_o^q \varepsilon_1}^i \sum_{k=i}^{\infty} N_o^{q(k-i)}  \norm{\mgh{x \in \OO : \bgh{\M_1(\Psi_2)(x)}^{\frac{1}{p(x)-1}} > \delta N_o^{k-i}\lambda_0}}.
    \end{aligned}
\end{equation}
It follows from Lemma~\ref{distribution2} that
\begin{equation}\label{ma3}
\begin{aligned}
S &\le 4|\OO| + \frac{c}{\lambda_0^q}\integral{\OO}{\M_1(\kappa)^{\frac{q}{p(x)-1}}}{dx}+\frac{c}{\lambda_0^q}\integral{\OO}{\M_1(\Psi_1)^{\frac{q}{p(x)-1}}}{dx}\\
&\qquad+\frac{c}{\lambda_0^q}\integral{\OO}{\M_1(\Psi_2)^{\frac{q}{p(x)-1}}}{dx},
\end{aligned}
\end{equation}
where we selected $\varepsilon_1$ so that $N_o^q \varepsilon_1 \le \frac{1}{2}$, and then $\delta = \delta(\data,q) > 0$ is determined.
Meanwhile, Lemma~\ref{distribution2} implies
\begin{equation}\label{ma3.5}
    \begin{aligned}
        \integral{\OO}{|Du|^q}{dx} &\le \integral{\OO}{\M(|Du|)^q}{dx} \le c \lambda_0^q \gh{|\OO|+S}.
    \end{aligned}
\end{equation}
Inserting \eqref{ma3} into \eqref{ma3.5} and recalling $\eqref{r0-1}$ and \eqref{smallness}, we find
\begin{equation}\label{ma3.5-2}
    \begin{aligned}
        \integral{\OO}{|Du|^q}{dx}
        &\le c \gh{\frac1{R_0}}^{(n+1)q}
         + c\integral{\OO}{\M_1(\kappa)^{\frac{q}{p(x)-1}}}{dx}\\
        &\qquad+c\int_\Omega\mathcal{M}_1(\Psi_1)^{\frac{q}{p(x)-1}}\ dx
        +c\int_\Omega\mathcal{M}_1(\Psi_2)^{\frac{q}{p(x)-1}}\ dx
    \end{aligned}
\end{equation}
for some $c = c(\data,q,\Omega)>0$.
Using \eqref{L_1-est-Du} and \eqref{r0-1}, we can choose $R_0>0$ satisfying
\begin{align}\label{R_0-est}
\frac{1}{R_0} &\le \frac{c}{R} \mgh{ V+V^{\frac1{(\ga-1)(1-\alpha)}}+1}
\end{align}
for some $c = c(\data,\omega(\cdot),\alpha,\OO)>0$ and for some $R<1$, where $V$ is given in \eqref{VV}.
Furthermore, recalling \eqref{nu}, we compute for $x \in \OO$,
\begin{equation}\label{ma5}
    \begin{aligned}
        \M_1(\kappa)(x) &:= \sup_{r>0} \frac{r \nu(B_r(x))}{|B_r(x)|}
        \le \sup_{r>0} \frac{r |\mu|(B_r(x))}{|B_r(x)|} + \sup_{r>0} \frac{r |B_r(x) \cap \OO|}{|B_r(x)|}\\
        &\le \M_1(\mu)(x) + c(n) |\OO|^{\frac{1}{n}}.
    \end{aligned}
\end{equation}
Finally, we employ \eqref{ma3.5-2}, \eqref{R_0-est} and \eqref{ma5} to obtain the desired estimate \eqref{main_r}, which proves Theorem \ref{main theorem}.


\begin{remark}\label{main rk3}
If $p(\cdot)$ is a constant, then it follows from Remark~\ref{rem-p-const-1} and \eqref{ma3.5-2} that
\begin{equation*}
    \begin{aligned}
        \integral{\OO}{|Du|^q}{dx}
        &\le c \frac{|\OO|}{R^{nq}} \gh{\integral{\OO}{|Du|}{dx} +1}^q + c\integral{\OO}{\M_1(\mu)^{\frac{q}{p-1}}}{dx}\\
        &\qquad+c\int_\Omega\mathcal{M}_1(\Psi_1)^{\frac{q}{p-1}}\ dx
        +c\int_\Omega\mathcal{M}_1(\Psi_2)^{\frac{q}{p-1}}\ dx
    \end{aligned}
\end{equation*}
for some $c=c(n,\Lambda_1,\Lambda_2,p,q)>0$.
This estimate along with \eqref{L_1-const} leads to
\begin{align*}
\integral{\OO}{|Du|^q}{dx}
&\le c \integral{\OO}{\M_1(\mu)^{\frac{q}{p-1}}}{dx}
+c\integral{\OO}{\M_1(\Psi_1)^{\frac{q}{p-1}}}{dx}
+c\integral{\OO}{\M_1(\Psi_2)^{\frac{q}{p-1}}}{dx}\\
&\qquad+c\gh{\integral{\OO}{\M_1(G)^{\frac{1}{p-1}}}{dx}
+\integral{\OO}{|Dg|}{dx}}^q+c
\end{align*}
for some $c=c(n,\Lambda_1,\Lambda_2,p,q,R,\OO)>0$, where $G(\cdot):=\diva(Dg,\cdot)$.
\end{remark}



\begin{bibdiv}
\begin{biblist}

\bib{AM05}{article}{
      author={Acerbi, E.},
      author={Mingione, G.},
       title={Gradient estimates for the {$p(x)$}-{L}aplacean system},
        date={2005},
        ISSN={0075-4102},
     journal={J. Reine Angew. Math.},
      volume={584},
       pages={117\ndash 148},
         url={http://dx.doi.org/10.1515/crll.2005.2005.584.117},
      review={\MR{2155087}},
}

\bib{AH96}{book}{
      author={Adams, D.~R.},
      author={Hedberg, L.~I.},
       title={Function spaces and potential theory},
      series={Grundlehren der Mathematischen Wissenschaften [Fundamental
  Principles of Mathematical Sciences]},
   publisher={Springer-Verlag, Berlin},
        date={1996},
      volume={314},
        ISBN={3-540-57060-8},
         url={http://dx.doi.org/10.1007/978-3-662-03282-4},
      review={\MR{1411441}},
}

\bib{BW09}{article}{
      author={Bendahmane, M.},
      author={Wittbold, P.},
       title={Renormalized solutions for nonlinear elliptic equations with
  variable exponents and {$L^1$} data},
        date={2009},
        ISSN={0362-546X},
     journal={Nonlinear Anal.},
      volume={70},
      number={2},
       pages={567\ndash 583},
         url={http://dx.doi.org/10.1016/j.na.2007.12.027},
      review={\MR{2468403}},
}

\bib{BBGGPV95}{article}{
      author={B\'enilan, P.},
      author={Boccardo, L.},
      author={Gallou\"et, T.},
      author={Gariepy, R.},
      author={Pierre, M.},
      author={V\'azquez, J.~L.},
       title={An {$L^1$}-theory of existence and uniqueness of solutions of
  nonlinear elliptic equations},
        date={1995},
        ISSN={0391-173X},
     journal={Ann. Scuola Norm. Sup. Pisa Cl. Sci. (4)},
      volume={22},
      number={2},
       pages={241\ndash 273},
         url={http://www.numdam.org/item?id=ASNSP_1995_4_22_2_241_0},
      review={\MR{1354907}},
}

\bib{BC99}{article}{
      author={Boccardo, L.},
      author={Cirmi, G.~R.},
       title={Existence and uniqueness of solution of unilateral problems with
  {$L^1$} data},
        date={1999},
        ISSN={0944-6532},
     journal={J. Convex Anal.},
      volume={6},
      number={1},
       pages={195\ndash 206},
      review={\MR{1713958}},
}

\bib{BG89}{article}{
      author={Boccardo, L.},
      author={Gallou\"et, T.},
       title={Nonlinear elliptic and parabolic equations involving measure
  data},
        date={1989},
        ISSN={0022-1236},
     journal={J. Funct. Anal.},
      volume={87},
      number={1},
       pages={149\ndash 169},
         url={http://dx.doi.org/10.1016/0022-1236(89)90005-0},
      review={\MR{1025884}},
}

\bib{BDM11}{article}{
      author={B\"ogelein, V.},
      author={Duzaar, F.},
      author={Mingione, G.},
       title={Degenerate problems with irregular obstacles},
        date={2011},
        ISSN={0075-4102},
     journal={J. Reine Angew. Math.},
      volume={650},
       pages={107\ndash 160},
         url={https://doi.org/10.1515/CRELLE.2011.006},
      review={\MR{2770559}},
}

\bib{BH10}{article}{
      author={B\"ogelein, V.},
      author={Habermann, J.},
       title={Gradient estimates via non standard potentials and continuity},
        date={2010},
        ISSN={1239-629X},
     journal={Ann. Acad. Sci. Fenn. Math.},
      volume={35},
      number={2},
       pages={641\ndash 678},
         url={http://dx.doi.org/10.5186/aasfm.2010.3541},
      review={\MR{2731714}},
}

\bib{BP05}{article}{
      author={Brezis, H.},
      author={Ponce, A.~C.},
       title={Reduced measures for obstacle problems},
        date={2005},
        ISSN={1079-9389},
     journal={Adv. Differential Equations},
      volume={10},
      number={11},
       pages={1201\ndash 1234},
      review={\MR{2175334}},
}

\bib{BS02}{article}{
      author={Brezis, H.},
      author={Serfaty, S.},
       title={A variational formulation for the two-sided obstacle problem with
  measure data},
        date={2002},
        ISSN={0219-1997},
     journal={Commun. Contemp. Math.},
      volume={4},
      number={2},
       pages={357\ndash 374},
         url={https://doi.org/10.1142/S0219199702000671},
      review={\MR{1901150}},
}

\bib{BC15}{article}{
      author={Byun, S.-S.},
      author={Cho, Y.},
       title={Nonlinear gradient estimates for parabolic obstacle problems in
  non-smooth domains},
        date={2015},
        ISSN={0025-2611},
     journal={Manuscripta Math.},
      volume={146},
      number={3-4},
       pages={539\ndash 558},
         url={https://doi.org/10.1007/s00229-014-0707-5},
      review={\MR{3312461}},
}

\bib{BCO16}{article}{
      author={Byun, S.-S.},
      author={Cho, Y.},
      author={Ok, J.},
       title={Global gradient estimates for nonlinear obstacle problems with
  nonstandard growth},
        date={2016},
        ISSN={0933-7741},
     journal={Forum Math.},
      volume={28},
      number={4},
       pages={729\ndash 747},
         url={https://doi.org/10.1515/forum-2014-0153},
      review={\MR{3518385}},
}

\bib{BCW12}{article}{
      author={Byun, S.-S.},
      author={Cho, Y.},
      author={Wang, L.},
       title={Calder\'on-{Z}ygmund theory for nonlinear elliptic problems with
  irregular obstacles},
        date={2012},
        ISSN={0022-1236},
     journal={J. Funct. Anal.},
      volume={263},
      number={10},
       pages={3117\ndash 3143},
         url={https://doi.org/10.1016/j.jfa.2012.07.018},
      review={\MR{2973336}},
}

\bib{BOP17}{article}{
      author={Byun, S.-S.},
      author={Ok, J.},
      author={Park, J.-T.},
       title={Regularity estimates for quasilinear elliptic equations with
  variable growth involving measure data},
        date={2017},
        ISSN={0294-1449},
     journal={Ann. Inst. H. Poincar\'{e} Anal. Non Lin\'{e}aire},
      volume={34},
      number={7},
       pages={1639\ndash 1667},
         url={https://doi.org/10.1016/j.anihpc.2016.12.002},
      review={\MR{3724751}},
}

\bib{BOR16}{article}{
      author={Byun, S.-S.},
      author={Ok, J.},
      author={Ryu, S.},
       title={Global gradient estimates for elliptic equations of
  {$p(x)$}-{L}aplacian type with {BMO} nonlinearity},
        date={2016},
        ISSN={0075-4102},
     journal={J. Reine Angew. Math.},
      volume={715},
       pages={1\ndash 38},
         url={http://dx.doi.org/10.1515/crelle-2014-0004},
      review={\MR{3507918}},
}

\bib{BR20}{article}{
      author={Byun, S.-S.},
      author={Ryu, S.},
       title={Gradient estimates for nonlinear elliptic double obstacle
  problems},
        date={2020},
        ISSN={0362-546X},
     journal={Nonlinear Anal.},
      volume={194},
       pages={111333, 13},
         url={https://doi.org/10.1016/j.na.2018.08.011},
      review={\MR{4074598}},
}

\bib{BW04}{article}{
      author={Byun, S.-S.},
      author={Wang, L.},
       title={Elliptic equations with {BMO} coefficients in {R}eifenberg
  domains},
        date={2004},
        ISSN={0010-3640},
     journal={Comm. Pure Appl. Math.},
      volume={57},
      number={10},
       pages={1283\ndash 1310},
         url={http://dx.doi.org/10.1002/cpa.20037},
      review={\MR{2069724}},
}

\bib{CC95}{book}{
      author={Caffarelli, L.~A.},
      author={Cabr\'e, X.},
       title={Fully nonlinear elliptic equations},
      series={American Mathematical Society Colloquium Publications},
   publisher={American Mathematical Society, Providence, RI},
        date={1995},
      volume={43},
        ISBN={0-8218-0437-5},
         url={http://dx.doi.org/10.1090/coll/043},
      review={\MR{1351007}},
}

\bib{CP98}{article}{
      author={Caffarelli, L.~A.},
      author={Peral, I.},
       title={On {$W^{1,p}$} estimates for elliptic equations in divergence
  form},
        date={1998},
        ISSN={0010-3640},
     journal={Comm. Pure Appl. Math.},
      volume={51},
      number={1},
       pages={1\ndash 21},
  url={http://dx.doi.org/10.1002/(SICI)1097-0312(199801)51:1<1::AID-CPA1>3.3.CO;2-N},
      review={\MR{1486629}},
}

\bib{CF13}{book}{
      author={Cruz-Uribe, D.},
      author={Fiorenza, A.},
       title={Variable {L}ebesgue spaces},
      series={Applied and Numerical Harmonic Analysis},
   publisher={Birkh\"auser/Springer, Heidelberg},
        date={2013},
        ISBN={978-3-0348-0547-6; 978-3-0348-0548-3},
         url={http://dx.doi.org/10.1007/978-3-0348-0548-3},
        note={Foundations and harmonic analysis},
      review={\MR{3026953}},
}

\bib{DMOP99}{article}{
      author={Dal~Maso, G.},
      author={Murat, F.},
      author={Orsina, L.},
      author={Prignet, A.},
       title={Renormalized solutions of elliptic equations with general measure
  data},
        date={1999},
        ISSN={0391-173X},
     journal={Ann. Scuola Norm. Sup. Pisa Cl. Sci. (4)},
      volume={28},
      number={4},
       pages={741\ndash 808},
         url={http://www.numdam.org/item?id=ASNSP_1999_4_28_4_741_0},
      review={\MR{1760541}},
}

\bib{DD99}{incollection}{
      author={Dall'Aglio, P.},
      author={Dal~Maso, G.},
       title={Some properties of the solutions of obstacle problems with
  measure data},
        date={1999},
      volume={48},
       pages={99\ndash 116},
        note={Papers in memory of Ennio De Giorgi (Italian)},
      review={\MR{1765679}},
}

\bib{DL02}{article}{
      author={Dall'Aglio, P.},
      author={Leone, C.},
       title={Obstacles problems with measure data and linear operators},
        date={2002},
        ISSN={0926-2601},
     journal={Potential Anal.},
      volume={17},
      number={1},
       pages={45\ndash 64},
         url={https://doi.org/10.1023/A:1015227817053},
      review={\MR{1906408}},
}

\bib{DHHR11}{book}{
      author={Diening, L.},
      author={Harjulehto, P.},
      author={H\"ast\"o, P.},
      author={R\r{u}\v{z}i\v{c}ka, M.},
       title={Lebesgue and {S}obolev spaces with variable exponents},
      series={Lecture Notes in Mathematics},
   publisher={Springer, Heidelberg},
        date={2011},
      volume={2017},
        ISBN={978-3-642-18362-1},
         url={http://dx.doi.org/10.1007/978-3-642-18363-8},
      review={\MR{2790542}},
}

\bib{DM10}{article}{
      author={Duzaar, F.},
      author={Mingione, G.},
       title={Gradient estimates via linear and nonlinear potentials},
        date={2010},
        ISSN={0022-1236},
     journal={J. Funct. Anal.},
      volume={259},
      number={11},
       pages={2961\ndash 2998},
         url={http://dx.doi.org/10.1016/j.jfa.2010.08.006},
      review={\MR{2719282}},
}

\bib{DM11}{article}{
      author={Duzaar, F.},
      author={Mingione, G.},
       title={Gradient estimates via non-linear potentials},
        date={2011},
        ISSN={0002-9327},
     journal={Amer. J. Math.},
      volume={133},
      number={4},
       pages={1093\ndash 1149},
         url={http://dx.doi.org/10.1353/ajm.2011.0023},
      review={\MR{2823872}},
}

\bib{Erh14}{article}{
      author={Erhardt, A.},
       title={Calder\'{o}n-{Z}ygmund theory for parabolic obstacle problems
  with nonstandard growth},
        date={2014},
        ISSN={2191-9496},
     journal={Adv. Nonlinear Anal.},
      volume={3},
      number={1},
       pages={15\ndash 44},
         url={https://doi.org/10.1515/anona-2013-0024},
      review={\MR{3176596}},
}

\bib{KS00}{book}{
      author={Kinderlehrer, D.},
      author={Stampacchia, G.},
       title={An introduction to variational inequalities and their
  applications},
      series={Classics in Applied Mathematics},
   publisher={Society for Industrial and Applied Mathematics (SIAM),
  Philadelphia, PA},
        date={2000},
      volume={31},
        ISBN={0-89871-466-4},
         url={https://doi.org/10.1137/1.9780898719451},
        note={Reprint of the 1980 original},
      review={\MR{1786735}},
}

\bib{KS03}{article}{
      author={Kinnunen, J.},
      author={Saksman, E.},
       title={Regularity of the fractional maximal function},
        date={2003},
        ISSN={0024-6093},
     journal={Bull. London Math. Soc.},
      volume={35},
      number={4},
       pages={529\ndash 535},
         url={https://doi.org/10.1112/S0024609303002017},
      review={\MR{1979008}},
}

\bib{LMS14}{article}{
      author={Lemenant, A.},
      author={Milakis, E.},
      author={Spinolo, L.~V.},
       title={On the extension property of {R}eifenberg-flat domains},
        date={2014},
        ISSN={1239-629X},
     journal={Ann. Acad. Sci. Fenn. Math.},
      volume={39},
      number={1},
       pages={51\ndash 71},
         url={http://dx.doi.org/10.5186/aasfm.2014.3907},
      review={\MR{3186805}},
}

\bib{Leo00}{article}{
      author={Leone, C.},
       title={On a class of nonlinear obstacle problems with measure data},
        date={2000},
        ISSN={0360-5302},
     journal={Comm. Partial Differential Equations},
      volume={25},
      number={11-12},
       pages={2259\ndash 2286},
         url={https://doi.org/10.1080/03605300008821584},
      review={\MR{1789927}},
}

\bib{Leo01}{article}{
      author={Leone, C.},
       title={Existence and uniqueness of solutions for nonlinear obstacle
  problems with measure data},
        date={2001},
        ISSN={0362-546X},
     journal={Nonlinear Anal.},
      volume={43},
      number={2, Ser. A: Theory Methods},
       pages={199\ndash 215},
         url={https://doi.org/10.1016/S0362-546X(99)00190-X},
      review={\MR{1790102}},
}

\bib{Lie88}{article}{
      author={Lieberman, G.~M.},
       title={Boundary regularity for solutions of degenerate elliptic
  equations},
        date={1988},
        ISSN={0362-546X},
     journal={Nonlinear Anal.},
      volume={12},
      number={11},
       pages={1203\ndash 1219},
         url={http://dx.doi.org/10.1016/0362-546X(88)90053-3},
      review={\MR{969499}},
}

\bib{Min10}{article}{
      author={Mingione, G.},
       title={Gradient estimates below the duality exponent},
        date={2010},
        ISSN={0025-5831},
     journal={Math. Ann.},
      volume={346},
      number={3},
       pages={571\ndash 627},
         url={http://dx.doi.org/10.1007/s00208-009-0411-z},
      review={\MR{2578563}},
}

\bib{Ngu15}{article}{
      author={Nguyen, Q.-H.},
       title={Global estimates for quasilinear parabolic equations on
  {R}eifenberg flat domains and its applications to {R}iccati type parabolic
  equations with distributional data},
        date={2015},
        ISSN={0944-2669},
     journal={Calc. Var. Partial Differential Equations},
      volume={54},
      number={4},
       pages={3927\ndash 3948},
         url={http://dx.doi.org/10.1007/s00526-015-0926-y},
      review={\MR{3426099}},
}

\bib{NP19}{article}{
      author={Nguyen, Q.-H.},
      author={Phuc, N.~C.},
       title={Good-{$\lambda$} and {M}uckenhoupt-{W}heeden type bounds in
  quasilinear measure datum problems, with applications},
        date={2019},
        ISSN={0025-5831},
     journal={Math. Ann.},
      volume={374},
      number={1-2},
       pages={67\ndash 98},
         url={https://doi.org/10.1007/s00208-018-1744-2},
      review={\MR{3961305}},
}

\bib{OR01a}{article}{
      author={Oppezzi, P.},
      author={Rossi, A.~M.},
       title={Unilateral problems with measure data},
        date={2001},
        ISSN={0362-546X},
     journal={Nonlinear Anal.},
      volume={43},
      number={8, Ser. A: Theory Methods},
       pages={1057\ndash 1088},
         url={https://doi.org/10.1016/S0362-546X(99)00244-8},
      review={\MR{1812074}},
}

\bib{OR01b}{article}{
      author={Oppezzi, P.},
      author={Rossi, A.~M.},
       title={Unilateral problems with measure data: links and convergence},
        date={2001},
        ISSN={0893-4983},
     journal={Differential Integral Equations},
      volume={14},
      number={9},
       pages={1051\ndash 1076},
      review={\MR{1852871}},
}

\bib{Phu14a}{article}{
      author={Phuc, N.~C.},
       title={Nonlinear {M}uckenhoupt-{W}heeden type bounds on {R}eifenberg
  flat domains, with applications to quasilinear {R}iccati type equations},
        date={2014},
        ISSN={0001-8708},
     journal={Adv. Math.},
      volume={250},
       pages={387\ndash 419},
         url={http://dx.doi.org/10.1016/j.aim.2013.09.022},
      review={\MR{3122172}},
}

\bib{Sch12b}{article}{
      author={Scheven, C.},
       title={Elliptic obstacle problems with measure data: potentials and low
  order regularity},
        date={2012},
        ISSN={0214-1493},
     journal={Publ. Mat.},
      volume={56},
      number={2},
       pages={327\ndash 374},
         url={https://doi.org/10.5565/PUBLMAT_56212_04},
      review={\MR{2978327}},
}

\bib{Sch12a}{article}{
      author={Scheven, C.},
       title={Gradient potential estimates in non-linear elliptic obstacle
  problems with measure data},
        date={2012},
        ISSN={0022-1236},
     journal={J. Funct. Anal.},
      volume={262},
      number={6},
       pages={2777\ndash 2832},
         url={https://doi.org/10.1016/j.jfa.2012.01.003},
      review={\MR{2885965}},
}

\bib{Sho97}{book}{
      author={Showalter, R.~E.},
       title={Monotone operators in {B}anach space and nonlinear partial
  differential equations},
      series={Mathematical Surveys and Monographs},
   publisher={American Mathematical Society, Providence, RI},
        date={1997},
      volume={49},
        ISBN={0-8218-0500-2},
      review={\MR{1422252}},
}

\bib{Tor97}{article}{
      author={Toro, T.},
       title={Doubling and flatness: geometry of measures},
        date={1997},
        ISSN={0002-9920},
     journal={Notices Amer. Math. Soc.},
      volume={44},
      number={9},
       pages={1087\ndash 1094},
      review={\MR{1470167}},
}

\end{biblist}
\end{bibdiv}

\end{document}